\newcommand{\N}{{\mathbb{N}}}
\newcommand{\Ninf}{{\mathbb{N}\cup\{\infty \}}}
\newcommand{\Z}{{\mathbb{Z}}}
\newcommand{\Oo}{{\mathcal{O}}}
\newcommand{\OAB}{{\mathcal{O}_{A,B}}}
\newcommand{\LAB}{{\Lambda_{A,B}}}
\newcommand{\Zplus}{\Z^+}
\newcommand{\OmA}{{\Omega_{A}}}
\newcommand{\SAB}{{\mathcal{S}(\LAB)}}
\newtheorem{lemma}{Lemma}[section]
\newtheorem{corollary}[lemma]{Corollary}
\newtheorem{theorem}[lemma]{Theorem}
\newtheorem{proposition}[lemma]{Proposition}
\newtheorem{remark}[lemma]{Remark}
\newtheorem{definition}[lemma]{Definition}
\newtheorem{example}[lemma]{Example}
\newtheorem{noname}[lemma]{}
\begin{document}

\title[Inverse semigroup crossed products]{Representing Kirchberg algebras as inverse semigroup crossed products}%

\author{Ruy Exel}
\address{Departamento de Matem\'atica, Universidade Federal de Santa Catarina, 88040-970 Florian\'opolis SC, Brazil}
\email{exel@mtm.ufsc.br}\urladdr{http://www.mtm.ufsc.br/~exel/}

\author{Enrique Pardo}
\address{Departamento de Matem\'aticas, Facultad de Ciencias\\ Universidad de C\'adiz, Campus de
Puerto Real\\ 11510 Puerto Real (C\'adiz)\\ Spain.}
\email{enrique.pardo@uca.es}\urladdr{https://sites.google.com/a/gm.uca.es/enrique-pardo-s-home-page/}\urladdr{http://www.uca.es/dpto/C101/pags-personales/enrique.pardo}

%\dedicatory{Draft, version 5.0}

\thanks{The first-named author was partially supported by CNPq. The second-named author was partially supported by PAI III grants FQM-298 and P07-FQM-7156 of the Junta de Andaluc\'{\i}a, by the DGI-MICINN and European Regional Development Fund, jointly, through Project MTM2011-28992-C02-02 and by 2009 SGR 1389 grant of the Comissionat per Universitats i Recerca de la Generalitat de Catalunya.}

\subjclass[2010]{46L05, 46L55}

\keywords{Kirchberg algebra, Katsura algebra, Semigroupoid, Tight representation, inverse semigroup crossed product, groupoid, groupoid $C^*$-algebra}

%\date{\today}

\begin{abstract}
In this note we show that a combinatorial model of Kirchberg algebras in the UCT, namely the Katsura algebras $\OAB$, can be expressed both as groupoid $C^*$-algebras and as inverse semigroup crossed products. We use this picture to obtain results about simplicity, pure infiniteness and nuclearity of $\OAB$ using different methods than those used by Katsura.
\end{abstract}
\maketitle

\section{Introduction}

$C^*$-algebras have been, from its very origin in the 1940s, a class of operator algebras deeply studied and analized. The complexity of these objects forced the search of classifying invariants to analyze them, mostly of K-Theoretical nature \cite{Black}.

In 1973 Elliott fixed the Classification Program: to classify separable nuclear $C^*$-algebras via K-Theoretic invariants (see \cite{Class} for an exhaustive account). This is still a very active field of research, with a great success in the case of purely infinite simple $C^*$-algebras. This class of algebras, originally introduced by Cuntz \cite{C}, have its most essential examples in the so-called Cuntz-Krieger algebras $\mathcal{O}_A$ (where $A\in M_n(\Z^+)$)  \cite{CK}, a proper subclass of separable nuclear purely infinite simple $C^*$-algebras. Cuntz-Krieger algebras have a pure combinatorial algebraic nature, and its classification, obtained by R\o rdam \cite{Rordam}, is related to symbolic dynamics via Franks' classification of irreducible essential subshifts of finite type \cite{Franks} (modulo the use of some sophisticated K-Theoretic tools). Simultaneously to R\o rdam's work, Kirchberg \cite{Kirch} and Phillips \cite{Phil} independently showed that separable nuclear purely infinite simple $C^*$-algebras satisfying the UCT (now called Kirchberg algebras) are classifiable using only $K_0$ and $K_1$ as invariants. This result extends R\o rdam's one, but it  is existential, and cannot be transferred to a combinatorial, algebraic context. \vspace{.2truecm}

On the combinatorial framework Kumjian, Pask, Raeburn and Renault \cite{KPR, KPRR} extended Cuntz-Krieger algebras to a biggest class, that of graph $C^*$-algebras (see \cite{Raeburn} for an exhaustive account). The milestone of this class is that its properties are faithfully reflected in the combinatorial structure of the graphs used to construct it. In the purely infinite simple case, they conform a class extending that of purely infinite simple Cuntz-Krieger algebras, but it is still a proper subclass of Kirchberg algebras. In this context S\o rensen \cite{Sor} give a combinatorial classification result for simple graph $C^*$-algebras of graphs with finitely many vertices and countably many edges. This result and R\o rdam's one combine to give a combinatorial classification result for unital simple graph $C^*$-algebras.

Finally, Katsura \cite{Kat1} constructed a class of algebras $\mathcal{O}_{A,B}$ (where $A\in M_n(\Z^+)$, $B\in M_n(\Z)$) of combinatorial nature, generalizing Cuntz-Krieger algebras. Moreover, by using the Kirchberg-Phillips Theorem, he showed that any Kirchberg algebra can be represented (up to isomorphism) by an algebra of this family. Despite  its purely combinatorial nature, Katsura analyzed these algebras via the notion of topological graph \cite{Kat2}. So, in the study of its structure and properties, the purely algebraic combinatorial framework is somehow forgotten. But these algebras clearly appear as a suitable combinatorial model for Kirchberg algebras, opening the possibility of an algebraic study of this relevant class of algebras.\vspace{.2truecm}

Very recently, the first named author\cite{Exel1} developed a complete theory for studying $C^*$-algebras from a combinatorial point of view. In its early stages, his theory produced interesting approaches to Cuntz-Krieger algebras for infinite matrices \cite{ExelLaca} and more general objects \cite{ELQ}. Concretely, he analizes how to represent large families of algebras via standarized constructions of combinatorial groupoids, and how to express these algebras as  partial crossed products of commutative $C^*$-algebras by inverse semigroups and groups. In this way, he faithfully reflects the symbolic dynamic information associated to these algebras, and so opens the possibility of studying this kind of algebras by its intrinsic combinatorial structure, specially for its classification by combinatorial methods.\vspace{.2truecm}

In this paper, we will study Katsura algebras from a combinatorial point of view, using the techniques developed by the first named author. At first sight, the more natural approach would be to mimic Exel and Laca's strategy \cite{ExelLaca}, by searching a suitable group acting by partial automorphisms on a commutative $C^*$-algebra; in order to identify such a commutative algebra, the techniques developed in \cite{ELQ} would be essential. But the problem is that the natural group $G$ associated to such a picture of $\OAB$ does not enjoy the right properties, so that the techniques of \cite{ELQ} do not apply to give us the desired description of $\OAB$ as a partial group crossed product. So, we need to start from a more basic point of view, by constructing a semigroupoid $\Lambda$ whose properties allow us to use the techniques developed in \cite{Exel1} to present $\OAB$ as the $C^*$-algebra of $\Lambda$. By proving that $\Lambda$ enjoys the right properties, we will then be able to present $\OAB$ as a groupoid C*-algebra and as an inverse semigroup crossed product. The aim is to use this combinatorial point of view to associate a symbolic dynamical model to $\OAB$, and then to use it to state its basic properties in a more natural way. We are specially interested in stating the nuclearity of these algebras, and characterizing simplicity and pure infiniteness using this approach of the problem, more natural and intuitive. The future goal to be attended will be to understand the symbolic dynamics model associated to $\OAB$, in the search of a combinatorial, purely algebraic version of Kirchberg-Phillips Theorem.\vspace{.2truecm}

The contents of this paper can be summarized as follows. In Section 2, we recall the definitions and results about Katsura algebras $\OAB$ we will use in the paper. Section 3 is devoted to constructing a suitable semigroupoid $\LAB$ associated to $\OAB$, to analyze its basic properties and to define a inverse semigroup $\SAB$ associated to it. In Section 4 we state a tight representation of $\LAB$ in $\OAB$, and we conclude that $\OAB$ is isomorphic to the full $C^*$-algebra of the tight groupoid $\mathcal{G}_{\LAB}$ associated to $\SAB$; also, the amenability of the groupoid is proved. In Section 5 we compute an explicit form of $\mathcal{G}_{\LAB}^{(0)}$, and thus we obtain an explicit, operational presentation of the groupoid in terms of the action of the inverse semigroup $\mathcal{S}^{A,B}$ generated by the partial isometries of $\OAB$. By using this picture, we characterize when the groupoid is minimal and essentially free in Sections 6 and 7 respectively. In Section 8, under mild hypotheses, we obtain a characterization of simplicity for $\OAB$ improving Katsura's results, who only gives sufficient conditions for simplicity. In Section 9 we state sufficient conditions for the algebra $\OAB$ be purely infinite (simple). In the simple case, because of the characterization result obtained before, it extends the case stated by Katsura to more general situations.

\section{Summary on Katsura algebras}

In this section we will quickly recall the definition and basic properties of Katsura algebras that will be needed in the sequel. All the contents of this section are borrowed from \cite{Kat1}.

\begin{definition}\label{Def:KatAlgData}
{\rm Let $N\in \Ninf$, let $A\in M_N(\Zplus)$ and $B\in M_N(\Z)$ be row-finite matrices. Define a set $\OmA$ by
$$\OmA:=\{ (i,j)\in \{ 1, 2, \dots ,N\}\times\{ 1, 2, \dots ,N\} \mid A_{i,j}\geq 1 \}.$$
For each $i\in \{ 1, 2, \dots ,N\}$, define a set $\OmA(i)\subset \{ 1, 2, \dots ,N\}$ by
$$\OmA(i):=\{ j \in \{ 1, 2, \dots ,N\}\mid (i,j)\in \OmA\}.$$
Notice that, by definition, $\OmA(i)$ is finite for all $i$.
Finally, fix the following relation:
\begin{quotation}
(0) $\OmA(i)\ne \emptyset$ for all $i$, and $B_{i,j}=0$ for $(i,j)\not\in \OmA$.
\end{quotation}}
\end{definition}

\begin{remark}\label{Rem:Row_Finite}
{\rm Notice that the first statement on Condition (0) implies that $A$ has no identically zero rows.
}
\end{remark}

With these data we can define the algebras

\begin{definition}\label{Def:KatAlgAlgebra}
{\rm With the data of Definition \ref{Def:KatAlgData}, define $\OAB$ to be the universal $C^*$-algebra generated by mutually orthogonal projections $\{ q_i\}_{i=1}^N$, partial unitaries $\{ u_i\}_{i=1}^N$ with $u_iu_i^*=u_i^*u_i=q_i$, and partial isometries $\{ s_{i,j,n}\}_{(i,j)\in \OmA, n\in \Z}$ satisfying the relations:
\begin{enumerate}
\item[(i)] $s_{i,j,n}u_j=s_{i,j, n+A_{i,j}}$ and $u_is_{i,j,n}=s_{i,j, n+B_{i,j}}$ for all $(i,j)\in \OmA$ and $n\in \Z$.
\item[(ii)] $s_{i,j,n}^*s_{i,j,n}=q_j$ for all $(i,j)\in \OmA$ and $n\in \Z$.
\item[(iii)] $q_i=\sum\limits_{j\in \OmA(i)}\sum\limits_{n=1}^{A_{i,j}}s_{i,j,n}s_{i,j,n}^*$ for all $i$.
\end{enumerate}}
\noindent We will denote $p_{i,j,n}:=s_{i,j,n}s_{i,j,n}^*$.
\end{definition}

\begin{noname}\label{KatsuraConditions}
{\rm Now, the following facts holds:
\begin{enumerate}
\item The $C^*$-algebra $\OAB$ is separable, nuclear and in the UCT class \cite[Proposition 2.9]{Kat1}.
\item If the matrices $A,B$ satisfy the following addition properties:
\begin{enumerate}
\item $A$ is irreducible.
\item $A_{i,i}\geq 2$ and $B_{i,i}=1$ for every $1\leq i\leq N$.
\end{enumerate}
then the $C^*$-algebra $\OAB$ is simple and purely infinite, and hence a Kirchberg algebra \cite[Proposition 2.10]{Kat1}.
\item The $K$-groups of $\OAB$ are \cite[Proposition 2.6]{Kat1}:
\begin{enumerate}
\item $K_0(\OAB)\cong \mbox{coker}(I-A)\oplus \mbox{ker}(I-B)$,
\item $K_1(\OAB)\cong \mbox{coker}(I-B)\oplus \mbox{ker}(I-A)$.
\end{enumerate}
\item Every Kirchberg algebra can be represented, up to isomorphism, by an algebra $\OAB$ for matrices $A,B$ satisfying the conditions $(2)(a\& b)$ \cite[Proposition 4.5]{Kat2}.
\end{enumerate}
}
\end{noname}

Besides  that, it is useful to recall the following facts

\begin{lemma}\label{Lem:NormalFormOAB} Denote by $I$ the multiindex set $\{ (i_1,i_2,n_1)(i_2,i_3,n_2)\cdots (i_k,i_{k+1},n_k)\}$, where $(i_j, i_{j+1})\in \OmA$ and $n_j\in \Z$ for all $j$, and define $s(I)=i_1$, $t(I)=i_{k+1}$. Given a multiindex $I$ as above, define $s_I:= s_{i_1,i_2,n_1}s_{i_2,i_3,n_2}\cdots s_{i_k,i_{k+1},n_k}$. Then:
\begin{enumerate}
\item $\OAB=\overline{\text{span}}\{ s_Is_J^*\mid I, J \mbox{ multiindices with }t(I)=t(J)\}$.
\item For any multiindex $I$ there exists a unique multiindex $\widehat{I}$ such that $1\leq \widehat{n}_j\leq A_{i_j, i_{j+1}}$ for all $j$ and a unique $t_I\in \Z$ such that $s_I=s_{\widehat{I}}u_{k+1}^{t_I}$.
\item If $T^{A,B}=\{ s_Iu_{r(I)}^t \mid I \text{ multiindex in (2)}, t\in \Z\}$ then the multiplicative sub-semigroup of $\OAB$, denote by  $\mathcal{S}^{A,B}$ generated by $T^{A,B}\cup ({T^{A,B}})^*$ is an inverse semigroup with zero.
\end{enumerate}
\end{lemma}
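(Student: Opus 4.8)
The plan is to derive all three statements from a single purely algebraic \emph{reduction procedure}, which rewrites any word in the generators $q_i,u_i,s_{i,j,n}$ and their adjoints using relations (i)--(iii), supplemented by the faithfulness of Katsura's topological-graph model of $\OAB$ to guarantee that the normal forms one produces really are distinct. For (1), I would first record the elementary rewriting rules that follow at once from the relations: $s_{i,j,n}u_j=s_{i,j,n+A_{i,j}}$ and $u_is_{i,j,n}=s_{i,j,n+B_{i,j}}$ let one absorb any $u_i$ sitting next to an $s$ into a shift of its third index; and from (ii)--(iii) one gets the periodicity $p_{i,j,n}=p_{i,j,n+A_{i,j}}$ together with the fact that the $p_{i,j,n}$ for $1\le n\le A_{i,j}$ are pairwise orthogonal (a finite sum of projections equal to a projection has orthogonal summands). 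With these, $q_i=\sum_{j,n}s_{i,j,n}s_{i,j,n}^*$ and $u_i=u_iq_i=\sum_{j,n}s_{i,j,n+B_{i,j}}s_{i,j,n}^*$ already lie in $\overline{\text{span}}\{s_Is_J^*\}$. It then suffices to see that this span is closed under multiplication: in $(s_Is_J^*)(s_Ks_L^*)$ the interior factor $s_J^*s_K$ collapses, edge by edge, to $s_{K'}$, to $s_{J'}^*$, or to $0$ according to whether $J$ is a prefix of $K$, $K$ a prefix of $J$, or the two paths diverge. Hence the span is a dense $*$-subalgebra, proving (1).

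For (2), existence of the normal form is the reduction procedure run from left to right: write $n_1=\widehat n_1+m_1A_{i_1,i_2}$ so that $s_{i_1,i_2,n_1}=s_{i_1,i_2,\widehat n_1}u_{i_2}^{m_1}$, push the stray $u_{i_2}^{m_1}$ across the next edge via $u_{i_2}s_{i_2,i_3,n}=s_{i_2,i_3,n+B_{i_2,i_3}}$, and iterate; all excess accumulates in a single factor $u_{i_{k+1}}^{t_I}$ at the terminal vertex. For uniqueness, observe that $s_{\widehat I}^*s_{\widehat I}=q_{t(\widehat I)}$ telescopes, so the range projection of $s_{\widehat I}u_{i_{k+1}}^{t}$ equals $p_{\widehat I}=s_{\widehat I}s_{\widehat I}^*$, independent of $t$. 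Comparing two normal forms, equality of range projections together with orthogonality of distinct reduced paths of equal length forces $\widehat I=\widehat I{}'$, and then cancelling yields $u_{i_{k+1}}^{t}=u_{i_{k+1}}^{t'}$, whence $t=t'$ because $u_{i_{k+1}}$ has infinite order. This last point, and the non-vanishing of $p_{\widehat I}$, are exactly where I would invoke Katsura's faithful representation.

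For (3), I would first show that every product of elements of $T^{A,B}\cup(T^{A,B})^*$ is either $0$ or of the form $s_{\widehat I}u^{t}s_{\widehat J}^*$ with $t(\widehat I)=t(\widehat J)$: the cancellation of (1) reduces the interior $s_{\widehat J}^*s_{\widehat K}$, any surviving $u$-power is re-absorbed and re-normalised via (2), and concatenations of reduced paths are again reduced. Each such element is a partial isometry, since $x^*x=p_{\widehat J}$ and $xx^*=p_{\widehat I}$ are projections, and the set is manifestly closed under the adjoint, which furnishes a generalised inverse. The idempotents of $\mathcal{S}^{A,B}$ are then products of the range projections $p_{\widehat I}$, and any two of these commute because they are either comparable or orthogonal (the prefix/divergence dichotomy, with orthogonality of incomparable paths again an automatic consequence of the C*-relations); the orthogonal case simultaneously provides the zero element. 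Since a $*$-closed semigroup of partial isometries whose idempotents commute is an inverse semigroup, this completes (3).

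I expect the main obstacle to be the bookkeeping in the cancellation step common to (1) and (3) --- deciding which reduced path is a prefix of which and correctly re-normalising the $u$-powers produced at each stage --- together with the need to import from Katsura the two non-degeneracy facts (non-vanishing of reduced-path range projections and infinite order of each $u_i$) on which the uniqueness in (2), and hence the sharpness of the normal form, ultimately rest.
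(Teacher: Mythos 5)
The paper offers no proof of this lemma for you to be compared against: Section~2 is explicitly a summary whose contents ``are borrowed from \cite{Kat1}'', and Lemma~\ref{Lem:NormalFormOAB} is stated there without argument. Judged on its own merits, your reconstruction is essentially correct, and it is the natural one. Your three steps --- absorbing $u$-powers into shifted third indices to get existence of the normal form; orthogonality of range projections of distinct reduced paths (forced by relation (iii), since a sum of projections equal to a projection has orthogonal summands) together with non-degeneracy to get uniqueness; and the collapse of arbitrary products to $s_{\widehat I}\,u^t\,s_{\widehat J}^*$ or $0$ to get the inverse semigroup structure --- mirror precisely the combinatorial normal-form results the paper proves later at the semigroupoid level (Lemmas \ref{Lem:NormalFormLAB} and \ref{Lem:UniqueNormalFormLAB}, and the construction of $\SAB$ in \ref{SAB}). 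The one genuine difference, which you correctly isolate, is that inside $\OAB$ uniqueness cannot be settled by pure combinatorics as it is for $\LAB$: one needs $q_i\neq 0$, $p_{\widehat I}\neq 0$, and $u_i^t\neq q_i$ for $t\neq 0$, and these facts come exactly from Katsura's faithful model (injectivity of $\mathcal{A}_N\hookrightarrow\OAB$). Your proof makes this dependence explicit, which is a virtue rather than a defect.

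Two places should be tightened. First, in (1): containing $q_i$ and $u_i$ and being closed under multiplication and adjoints does not by itself give density, since you must also capture the generators $s_{i,j,n}$; this is immediate from $s_{i,j,n}=s_{i,j,n}q_j$ and expanding $q_j$ by relation (iii) into $\sum_{k\in\OmA(j)}\sum_{m=1}^{A_{j,k}}s_{j,k,m}s_{j,k,m}^*$, but note that this step uses Condition (0), i.e.\ $\OmA(j)\neq\emptyset$. Second, in (3), the assertion that ``the idempotents of $\mathcal{S}^{A,B}$ are products of the range projections $p_{\widehat I}$'' needs justification: a priori an idempotent could have the form $s_{\widehat I}u^t s_{\widehat I}^*$ with $t\neq 0$ (ruled out, again, by the infinite order of $u_i$), and the cleanest route is to observe that every idempotent in a semigroup of partial isometries is a projection (an idempotent of norm at most one in a $C^*$-algebra is self-adjoint), and that any two projections $p,q$ in your semigroup commute because $pq$ is a partial isometry, whence $(pq)(pq)^*(pq)=pqpq=(pq)^2=pq$ shows $pq$ is an idempotent, hence a projection, hence $pq=(pq)^*=qp$. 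With these repairs your argument is a complete and correct proof of the lemma.
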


In the case that $B=(0)$ \cite[Proposition 4.7]{Kat1}, $\mathcal{O}_{A, (0)}$ is isomorphic to the Cuntz-Krieger algebra $\mathcal{O}_A$ (the Exel-Laca algebra \cite{ExelLaca} if $N=\infty$) generated by mutually orthogonal projections $\{ q_i\}_{i=1}^N$,  and partial isometries $\{ s_{i,j,n} \mid {(i,j)\in \OmA, 1\leq n\leq A_{i,j}}\}$ satisfying the relations:
\begin{enumerate}
\item[(i)] $s_{i,j,n}^*s_{i,j,n}=q_j$ for all $(i,j)\in \OmA$ and $1\leq n\leq A_{i,j}$.
\item[(ii)] $q_i=\sum\limits_{j\in \OmA(i)}\sum\limits_{n=1}^{A_{i,j}}p_{i,j,n}$ for all $i$.
\end{enumerate}
where $p_{i,j,n}:=s_{i,j,n}s_{i,j,n}^*$.

\begin{noname}\label{CuntzSemigroup}
{\rm Notice that for any matrix $B$ there is a natural injective $\ast$-homomorphism $\mathcal{O}_A\hookrightarrow \OAB$ \cite[Proposition 4.5]{Kat1}. When the multiindex sets $\{ (i_1,i_2,n_1)(i_2,i_3,n_2)\cdots (i_k,i_{k+1},n_k)\}$ restrict to the case where $(i_j, i_{j+1})\in \OmA$ and $1\leq n_j\leq A_{i_j,i_{j+1}}$ for all $j$, then $\mathcal{O}_A=\overline{\text{span}}\{ s_Is_J^*\mid I, J \mbox{ multiindices with }t(I)=t(J)\}$. Moreover, if $T^{A}=\{ s_I \mid I \text{ multiindex}\}$ then the semigroup $\mathcal{S}^{A}$ generated by $T^{A}\cup {T^{A}}^*$ is an inverse semigroup with zero, and the semilattices of projections $E(\mathcal{S}^{A})$ and $E(\mathcal{S}^{A,B})$ coincide.
}
\end{noname}

\section{The semigroupoid $\LAB$}

In this section we will associate to any pair of matrices $A,B$ satisfying Definition \ref{Def:KatAlgData} a semigroupoid $\LAB$, and we will state the essential properties enjoyed by this semigroupoid. Also, we will briefly describe the inverse semigroup $\SAB$ associated to $\LAB$ according to \cite[Section 14]{Exel1}. The essential idea to be kept in mind is that our semigroupoid is a sort of path semigroupoid.

\begin{definition}\label{Def:semigroupoidAB}
{\rm For matrices $A,B$ satisfying Definition \ref{Def:KatAlgData}, fix a set
$$G:=\{ h_i\mid 1\leq i\leq N\}\cup \{ g_{i,j,n}\mid (i,j)\in \OmA, n\in \Z\}.$$
Then, we define $\LAB$ to be the semigroupoid generated by $G$, satisfying:
\begin{enumerate}
\item The (partial) operation is induced by the admissible pairs
$$\{ (h_i, g_{i,j,n}), (g_{i,j,n}, h_j), (g_{i,j,n}, g_{j,k,m}), (h_i,h_i)\}$$
for all $1\leq i,j\leq N$, $(i,j), (j,k)\in \OmA$, $n,m\in \Z$.
\item $g_{i,j,n}h_j=g_{i,j, n+A_{i,j}}$ and $h_ig_{i,j,n}=g_{i,j, n+B_{i,j}}$ for all $(i,j)\in \OmA$ and $n\in \Z$.
 \end{enumerate}
}
\end{definition}

\begin{remark}\label{Rem:LABwelldefined}
{\rm Notice that, by the properties in Definition \ref{Def:semigroupoidAB}, the associative laws of the partial operation (see e.g. \cite[Section 14]{Exel1}) hold, so that $\LAB$ is a well-defined semigroupoid.
}
\end{remark}

Now, we will show that there exists a standard form for the elements of $\LAB$, and that this form is unique.

\begin{lemma}\label{Lem:Tecnic1LAB}
$g_{i,j,n}g_{j,k,m}=g_{i,j,n-A_{i,j}}g_{j,k,m+B_{j,k}}=g_{i,j,n+A_{i,j}}g_{j,k,m-B_{j,k}}$
\end{lemma}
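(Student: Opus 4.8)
The plan is to derive both equalities from a single ``commutation'' identity obtained by inserting the generator $h_j$ between the two factors. First I would observe that, by Definition~\ref{Def:semigroupoidAB}(1), both $(g_{i,j,n}, h_j)$ and $(h_j, g_{j,k,m})$ are admissible pairs (the latter being an instance of the admissible pair $(h_i, g_{i,j,n})$ after relabeling $i\to j$, $j\to k$, $n\to m$), so the length-three word $g_{i,j,n}\,h_j\,g_{j,k,m}$ is a legitimate product in $\LAB$. Throughout I use that $(i,j),(j,k)\in\OmA$ and that the shifts appearing below modify only the integer index $n$ or $m$, never the pair $(i,j)$ or $(j,k)$, so admissibility is preserved at every stage.

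Next I would invoke associativity, which holds by Remark~\ref{Rem:LABwelldefined}. Grouping the triple to the left and using the relation $g_{i,j,n}h_j = g_{i,j,n+A_{i,j}}$ from Definition~\ref{Def:semigroupoidAB}(2) gives $g_{i,j,n+A_{i,j}}\,g_{j,k,m}$; grouping it to the right and using $h_jg_{j,k,m} = g_{j,k,m+B_{j,k}}$ gives $g_{i,j,n}\,g_{j,k,m+B_{j,k}}$. Since the two groupings agree, I obtain
$$g_{i,j,n+A_{i,j}}\,g_{j,k,m} = g_{i,j,n}\,g_{j,k,m+B_{j,k}},$$
an identity valid for every $n,m\in\Z$.

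From here the lemma follows by two substitutions in this identity. Replacing $n$ by $n-A_{i,j}$ yields the first asserted equality $g_{i,j,n}g_{j,k,m} = g_{i,j,n-A_{i,j}}\,g_{j,k,m+B_{j,k}}$, while replacing $m$ by $m-B_{j,k}$ yields the second, $g_{i,j,n}g_{j,k,m} = g_{i,j,n+A_{i,j}}\,g_{j,k,m-B_{j,k}}$. Both substitutions are legitimate because $A_{i,j}\in\Zplus$ and $B_{j,k}\in\Z$, so the shifted indices remain in $\Z$.

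I expect the only delicate point to be the very first step: in the semigroupoid formalism the associativity axiom is conditional on the relevant products being defined, so one must genuinely verify that each of the pairwise products $g_{i,j,n}h_j$ and $h_jg_{j,k,m}$, as well as both bracketings of the triple, is admissible before equating the two regroupings. Everything after that is a purely formal manipulation of the defining relations in Definition~\ref{Def:semigroupoidAB}(2).
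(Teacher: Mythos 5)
Your proposal is correct and takes essentially the same route as the paper: both proofs hinge on interposing $h_j$ between the two factors and using Definition~\ref{Def:semigroupoidAB}(2) in both directions together with associativity of the partial product. The only cosmetic difference is that you first extract the single commutation identity $g_{i,j,n+A_{i,j}}g_{j,k,m}=g_{i,j,n}g_{j,k,m+B_{j,k}}$ and then specialize $n\mapsto n-A_{i,j}$ and $m\mapsto m-B_{j,k}$, whereas the paper writes out the two chains of equalities directly.
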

\begin{proof}
Simply notice that
$$g_{i,j,n-A_{i,j}}g_{j,k,m+B_{j,k}}=g_{i,j,n-A_{i,j}}h_jg_{j,k,m}=g_{i,j,n}g_{j,k,m}$$
and
$$g_{i,j,n+A_{i,j}}g_{j,k,m-B_{j,k}}=g_{i,j,n}h_jg_{j,k,m-B_{j,k}}=g_{i,j,n}g_{j,k,m}.$$
\end{proof}

By Definition \ref{Def:semigroupoidAB} and Lemma \ref{Lem:Tecnic1LAB}, it is easy to conclude that

\begin{lemma}\label{Lem:InitialFormLAB}
The elements of $\LAB$ have one of the following two forms:
\begin{enumerate}
\item $h_i^t$ for $1\leq i\leq N$ and $t\in\Z$.
\item $g_{i_1,i_2,n_1}g_{i_2,i_3,n_2}\cdots g_{i_k,i_{k+1},n_k}$ for $(i_j, i_{j+1})\in \OmA$, $n_j\in \Z$ for all $j$.
\end{enumerate}
\end{lemma}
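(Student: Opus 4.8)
The plan is to argue directly on words in the generators: first extract a coarse dichotomy from the admissibility constraints of Definition \ref{Def:semigroupoidAB}(1), and then use the relations of Definition \ref{Def:semigroupoidAB}(2) to collapse each alternative into one of the two stated forms. By construction every element of $\LAB$ is represented by a finite composable word $x_1 x_2 \cdots x_m$ with each $x_\ell \in G$ and each consecutive pair $(x_\ell, x_{\ell+1})$ admissible. Inspecting the admissible pairs shows that two neighbouring generators must agree on the shared index: consecutive $h$'s carry the same subscript, an $h$ immediately to the left of some $g_{i,j,n}$ must be $h_i$, and an $h$ immediately to its right must be $h_j$. This at once separates the analysis into the case where the word contains no $g$ and the case where it contains at least one $g$.

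In the first case, admissibility forces every factor to be one and the same $h_i$, so the word is literally a power $h_i^{t}$, which is form (1). In the second case I would absorb every $h$ into a neighbouring $g$ via $h_i g_{i,j,n} = g_{i,j,n+B_{i,j}}$ and $g_{i,j,n} h_j = g_{i,j,n+A_{i,j}}$. Reading the word from left to right, each maximal block of $h$'s is adjacent to at least one $g$ (a $g$ is present, and the admissibility data chains the indices together), so applying the appropriate relation repeatedly pushes that block into the neighbouring $g$, merely shifting its third index while leaving the pair $(i,j)$ untouched. Iterating removes all $h$'s and leaves a pure product of $g$'s, that is, form (2); composability of the surviving $g$'s is preserved precisely because shifting $n$ does not alter the source--target data.

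The one point that genuinely needs care, and where Lemma \ref{Lem:Tecnic1LAB} does the work, is that an $h_j$ sandwiched between $g_{i,j,n}$ and $g_{j,k,m}$ can be absorbed into either neighbour, so the two choices must yield the same element for the reduction to be well defined. Absorbing to the left gives $g_{i,j,n+A_{i,j}} g_{j,k,m}$, while absorbing to the right gives $g_{i,j,n} g_{j,k,m+B_{j,k}}$, and these coincide by Lemma \ref{Lem:Tecnic1LAB} applied with $n+A_{i,j}$ in place of $n$ (its first equality reads $g_{i,j,n+A_{i,j}} g_{j,k,m} = g_{i,j,n} g_{j,k,m+B_{j,k}}$). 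Combined with the associativity recorded in Remark \ref{Rem:LABwelldefined}, this makes the absorption procedure unambiguous, so the reduction terminates in a well-defined element of the claimed type.

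I expect this consistency check to be the only real obstacle; it is exactly what guarantees that the rewriting above does not depend on the order in which one removes the intermediate $h$'s. Everything else — that pure-$h$ words give powers of a single $h_i$, and that words containing a $g$ reduce to products of $g$'s — is the routine bookkeeping the authors have in mind when they call the conclusion easy, and it follows once the well-definedness of the reduction is in hand.
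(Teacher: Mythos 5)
Your proof is correct and is essentially the argument the paper intends: the paper gives no written proof, stating only that the lemma follows from Definition \ref{Def:semigroupoidAB} and Lemma \ref{Lem:Tecnic1LAB}, and your case split (pure-$h$ words versus words containing a $g$) followed by absorption of the $h$'s into neighbouring $g$'s via the defining relations is exactly that argument made explicit. One minor remark: the confluence issue you single out is automatic rather than a genuine obstacle, since both ways of absorbing a sandwiched $h_j$ are applications of defining relations of $\LAB$ and hence both resulting words equal the original element --- Lemma \ref{Lem:Tecnic1LAB} merely records this coincidence --- so no separate well-definedness check is needed for the mere \emph{existence} of the stated form (order-independence only becomes relevant for uniqueness, i.e.\ Lemma \ref{Lem:UniqueNormalFormLAB}).
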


Moreover, we can prove the following result.

\begin{lemma}\label{Lem:NormalFormLAB}
For any element $\alpha=g_{i_1,i_2,n_1}g_{i_2,i_3,n_2}\cdots g_{i_k,i_{k+1},n_k}$ with $(i_j, i_{j+1})\in \OmA$, $n_j\in \Z$, there exists $1\leq m_j\leq A_{i_j, i_{j+1}}$ for $1\leq j\leq k-1$ and $m_k\in \Z$ such that
$$\alpha=g_{i_1,i_2,m_1}g_{i_2,i_3,m_2}\cdots g_{i_{k-1},i_{k},m_{k-1}}g_{i_k,i_{k+1},m_k}.$$
\end{lemma}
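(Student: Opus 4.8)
The plan is to use Lemma~\ref{Lem:Tecnic1LAB} as the only tool and to reduce each of the first $k-1$ exponents into its prescribed range by a single left-to-right sweep. First I would iterate the two elementary identities of Lemma~\ref{Lem:Tecnic1LAB} to record the bulk shifting rule: for any consecutive pair and any $\ell\in\Z$,
$$g_{i,j,n}g_{j,k,m}=g_{i,j,\,n+\ell A_{i,j}}\,g_{j,k,\,m-\ell B_{j,k}}.$$
Indeed, one application shifts the left index by $\pm A_{i,j}$ and compensates in the right index by $\mp B_{j,k}$, so composing $|\ell|$ of them yields the displayed identity. Since $(i_j,i_{j+1})\in\OmA$ forces $A_{i_j,i_{j+1}}\geq 1$, the set $\{1,\dots,A_{i_j,i_{j+1}}\}$ is a complete set of residues modulo $A_{i_j,i_{j+1}}$; hence for any integer value of the left index there is a choice of $\ell$ bringing it into that set.

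Next I would run the sweep. At step $j$ (for $j=1,\dots,k-1$) I apply the bulk rule to the pair occupying positions $j$ and $j+1$, choosing $\ell$ so that the $j$-th index lands in $\{1,\dots,A_{i_j,i_{j+1}}\}$; call the resulting value $m_j$. This replaces the subword $g_{i_j,i_{j+1},(\cdot)}\,g_{i_{j+1},i_{j+2},(\cdot)}$ and leaves every other factor untouched. The one point to verify—the only genuine bookkeeping in the argument—is that the sweep does not disturb indices already normalized: the move at step $j$ alters only positions $j$ and $j+1$, whereas positions $1,\dots,j-1$ were fixed at earlier steps, and every later step $j'>j$ acts on positions $j'\geq j+1$, hence never touches position $j$ again. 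Thus after step $j$ the index at position $j$ is final. After the last step $j=k-1$, the indices at positions $1,\dots,k-1$ all lie in their ranges, while the index at position $k$, namely $m_k$, is whatever integer the accumulated compensations have produced; this is permissible because $m_k$ is unconstrained. This gives the asserted form.

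I expect no serious obstacle, as the statement is a normalization result and Lemma~\ref{Lem:Tecnic1LAB} is tailor-made for it. The only care needed is (a) justifying the arbitrary-multiple shift by iterating the two-sided elementary move, and (b) confirming the locality of that move—it touches exactly two adjacent factors—so that earlier normalizations survive the subsequent steps. Alternatively, the same argument may be packaged as an induction on $k$: normalize the first index, then apply the inductive hypothesis to the length-$(k-1)$ tail $g_{i_2,i_3,(\cdot)}\cdots g_{i_k,i_{k+1},(\cdot)}$, noting that operations internal to the tail cannot alter the prepended factor $g_{i_1,i_2,m_1}$.
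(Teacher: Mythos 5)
Your proposal is correct and is essentially the paper's own argument: the paper proves the lemma by induction on $k$, normalizing the length-$(k-1)$ prefix and then pushing the excess multiple of $A_{i_{k-1},i_k}$ onto the last factor via Lemma~\ref{Lem:Tecnic1LAB}, which when unrolled is exactly your left-to-right sweep. Your ``bulk shifting rule'' with arbitrary $\ell\in\Z$ is a mild streamlining that avoids the paper's separate treatment of the cases $m'_{k-1}\geq 0$ and $m'_{k-1}<0$, but the underlying mechanism is identical.
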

\begin{proof}
We will prove the result by induction on $k$. The case $k=2$ being clear, let us suppose that the result is true for $l< k$, and let us prove it for $l=k$. Then, suppose that
$$\alpha=g_{i_1,i_2,n_1}g_{i_2,i_3,n_2}\cdots g_{i_{k-1},i_{k},n_{k-1}}g_{i_k,i_{k+1},n_k}$$
with $(i_j, i_{j+1})\in \OmA$, $n_j\in \Z$. By induction hypothesis, there exists $1\leq m_j\leq A_{i_j, i_{j+1}}$ for $1\leq j\leq k-2$ and $m'_{k-1}\in \Z$ such that
$$\alpha=g_{i_1,i_2,m_1}g_{i_2,i_3,m_2}\cdots g_{i_{k-2},i_{k-1},m_{k-2}}g_{i_{k-1},i_{k},m'_{k-1}}g_{i_k,i_{k+1},n_k}.$$

Now, if $m'_{k-1}\geq 0$, there exist (unique) $1\leq m_{k-1}\leq A_{i_{k-1}, i_{k}}$ and $t\in \Zplus$ such that $m'_{k-1}=m_{k-1}+tA_{i_{k-1}, i_{k}}$. Hence, by Lemma \ref{Lem:Tecnic1LAB}, $g_{i_{k-1},i_{k},m'_{k-1}}g_{i_k ,i_{k+1},n_k}=g_{i_{k-1},i_{k},m_{k-1}}g_{i_k,i_{k+1},n_k+t B_{i_k, i_{k+1}}}$, so that the result holds by defining $m_k=n_k+t B_{i_k, i_{k+1}}$.

On the other hand, if $m'_{k-1}< 0$, there exist (unique) $1\leq m_{k-1}\leq A_{i_{k-1}, i_{k}}$ and $t\in \Z$ a negative integer such that $m'_{k-1}=m_{k-1}+tA_{i_{k-1}, i_{k}}$. Then, by Lemma \ref{Lem:Tecnic1LAB},
$$g_{i_{k-1},i_{k},m'_{k-1}}g_{i_k ,i_{k+1},n_k}= g_{i_{k-1},i_{k},m'_{k-1}}g_{i_k ,i_{k+1},n_k+ \vert t\vert B_{i_k, i_{k+1}}- \vert t\vert B_{i_k, i_{k+1}}}=$$
$$g_{i_{k-1},i_{k},m_{k-1}+tA_{i_{k-1}, i_{k}}}h_{i_k}^{\vert t\vert}g_{i_k ,i_{k+1},n_k- \vert t\vert B_{i_k, i_{k+1}}}=g_{i_{k-1},i_{k},m_{k-1}}g_{i_k ,i_{k+1},n_k- \vert t\vert B_{i_k, i_{k+1}}},$$
so that the result holds by defining $m_k=n_k-\vert t\vert B_{i_k, i_{k+1}}=n_k+t B_{i_k, i_{k+1}}$.
\end{proof}

We will call this special form of the elements of $\LAB$ the \emph{standard form}.

\begin{lemma}\label{Lem:UniqueNormalFormLAB}
The standard form of an element of $\LAB$ is unique.
\end{lemma}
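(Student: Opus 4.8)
The plan is to prove uniqueness by transporting the problem into the Katsura algebra $\OAB$ itself, where a unique normal form is already available from Lemma~\ref{Lem:NormalFormOAB}. The relations defining $\LAB$ in Definition~\ref{Def:semigroupoidAB}(2) are literally a sub-collection of the relations (i) satisfied by the generators of $\OAB$, so the assignment $g_{i,j,n}\mapsto s_{i,j,n}$, $h_i\mapsto u_i$ respects $g_{i,j,n}h_j=g_{i,j,n+A_{i,j}}$ and $h_ig_{i,j,n}=g_{i,j,n+B_{i,j}}$ and, the multiplication of $\OAB$ being total, carries admissible pairs to genuine products. By the universal property of $\LAB$ it therefore extends to a homomorphism $\phi\colon\LAB\to\OAB$ into the multiplicative semigroup of $\OAB$. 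Since $\phi$ is a well-defined map, any two elements that coincide in $\LAB$ have equal images; hence it suffices to show that $\phi$ separates distinct standard forms, i.e. that distinct standard-form data produce distinct elements of $\OAB$.

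So I would take $\alpha=g_{i_1,i_2,m_1}\cdots g_{i_k,i_{k+1},m_k}$ in standard form, with $1\le m_j\le A_{i_j,i_{j+1}}$ for $j<k$ and $m_k\in\Z$, and set $I=(i_1,i_2,m_1)\cdots(i_k,i_{k+1},m_k)$, so that $\phi(\alpha)=s_I$. Applying Lemma~\ref{Lem:NormalFormOAB}(2): the only entry of $I$ possibly outside the range is the last, so rewriting $s_{i_k,i_{k+1},m_k}=s_{i_k,i_{k+1},\widehat m_k}u_{i_{k+1}}^{\,t_I}$, with $1\le\widehat m_k\le A_{i_k,i_{k+1}}$ and $m_k=\widehat m_k+t_IA_{i_k,i_{k+1}}$, leaves the earlier factors untouched. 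Thus $\widehat I=(i_1,i_2,m_1)\cdots(i_{k-1},i_k,m_{k-1})(i_k,i_{k+1},\widehat m_k)$ is reduced and $s_I=s_{\widehat I}u_{i_{k+1}}^{\,t_I}$; in particular $\widehat m_j=m_j$ for $j<k$. The uniqueness clause of Lemma~\ref{Lem:NormalFormOAB}(2), read at the level of \emph{elements}, asserts that the assignment $(\widehat I,\ell)\mapsto s_{\widehat I}u_{t(\widehat I)}^{\,\ell}$ on reduced multiindices and $\ell\in\Z$ is injective. From such a pair one recovers all of the $\LAB$ data: $\widehat I$ returns $k$, every vertex $i_j$ and the entries $m_j=\widehat m_j$ for $j<k$ together with $\widehat m_k$, while $m_k=\widehat m_k+t_IA_{i_k,i_{k+1}}$ is recovered from $\widehat m_k$ and $t_I$.

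Consequently the composite sending a standard-form datum to $\alpha\in\LAB$ and then to $\phi(\alpha)=s_I$ is injective: if two standard forms $\alpha,\alpha'$ satisfy $\phi(\alpha)=\phi(\alpha')$, their reduced pairs agree, whence they share the same vertex string and the same entries $m_1,\dots,m_k$. Since $\phi(\alpha)=\phi(\alpha')$ whenever $\alpha=\alpha'$ in $\LAB$, this gives the claimed uniqueness. The remaining type of element, $h_i^t$ from Lemma~\ref{Lem:InitialFormLAB}(1), is treated identically through $\phi(h_i^t)=u_i^t$, using that the $q_i=u_iu_i^*$ are mutually orthogonal and that $u_i$ has infinite order.

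The only genuinely load-bearing input is the distinctness I read off from Lemma~\ref{Lem:NormalFormOAB}(2), namely that distinct reduced pairs $(\widehat I,\ell)$ yield distinct operators $s_{\widehat I}u_{t(\widehat I)}^{\,\ell}$; everything else is bookkeeping. Thus the main obstacle is making sure this uniqueness is invoked at the level of elements of $\OAB$ rather than merely as a statement about rewriting one fixed multiindex, and, relatedly, keeping careful track of the single free entry $m_k\in\Z$ of the $\LAB$ standard form, which corresponds under $\phi$ to the reduced entry $\widehat m_k$ paired with the exponent $t_I$ of the trailing partial unitary. If one preferred to avoid leaning on this structural fact about $\OAB$, the alternative is to prove uniqueness intrinsically by showing the rewriting of Lemma~\ref{Lem:Tecnic1LAB} is terminating and locally confluent and then invoking Newman's lemma; there I expect the confluence check on overlapping consecutive shifts to be the fiddly part.
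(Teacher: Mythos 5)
Your proof has a genuine gap, and it is exactly at the point you flagged as load-bearing: the element-level reading of Lemma~\ref{Lem:NormalFormOAB}(2) is \emph{false} for general matrices satisfying Condition (0), and consequently the homomorphism $\phi\colon\LAB\to\OAB$ does not separate distinct standard forms. Condition (0) only forces $B_{i,j}=0$ \emph{off} $\OmA$; it allows rows of $B$ to vanish identically (Condition (E), which excludes this, is an extra hypothesis introduced only later in the paper, and Lemma~\ref{Lem:UniqueNormalFormLAB} is asserted without it). If the $j$-th row of $B$ is identically zero, then relations (i) and (iii) of Definition~\ref{Def:KatAlgAlgebra} give $u_jq_j=q_j$, hence $u_j=u_jq_j=q_j$ (this is spelled out in Remark~\ref{Rem:Bij=0-irrellevant}), and therefore
$$s_{i,j,n+A_{i,j}}=s_{i,j,n}u_j=s_{i,j,n}q_j=s_{i,j,n}\quad\text{for all }n\in\Z.$$
So when $B=(0)$ (where $\mathcal{O}_{A,(0)}\cong\mathcal{O}_A$), the exponent $t_I$ in $s_I=s_{\widehat I}u^{t_I}$ is not unique, $u_j$ does not have infinite order (it is a projection), and $\phi$ collapses the infinitely many standard forms $g_{i,j,n}$ with $n$ running over a congruence class mod $A_{i,j}$ onto a single element of $\OAB$. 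Since the lemma being proved asserts in particular that $g_{i,j,n}\neq g_{i,j,n+A_{i,j}}$ in $\LAB$ for \emph{every} admissible pair $A,B$, no argument that factors through $\OAB$ can establish it: the natural representation of $\LAB$ in $\OAB$ is simply not faithful in these degenerate cases. (Your construction of $\phi$ itself, via the universal property of the presentation, is fine — and it neatly sidesteps the circularity that would arise from the paper's later map $\tau$ of Lemma~\ref{Lem:TightRep=Katsura2}, whose well-definedness invokes the very lemma at stake. The failure is solely the injectivity-on-standard-forms claim.)

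For contrast, the paper's proof is intrinsic and needs nothing about $\OAB$: it first observes that the defining rewrites preserve the length and the vertex data $(i_j,i_{j+1})$, then uses Lemma~\ref{Lem:Tecnic1LAB} to record that two standard forms of the same element must differ by integer shift vectors satisfying a triangular linear system, and concludes $n_j=m_j$ by induction using $1\leq n_j,m_j\leq A_{i_j,i_{j+1}}$. This works uniformly in $B$, zero rows included. Your fallback suggestion — treating Lemma~\ref{Lem:Tecnic1LAB} as a rewriting system and proving termination plus local confluence — is an intrinsic route of the same nature and could be made to work; the $\OAB$-transport route cannot.
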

\begin{proof}
First, notice that the length of an element of $\LAB$ (i.e. the number of $g_{i,j,n}$'s appearing in its expression), as well as the $(i_j, i_{j+1})$'s involved, do not change by the defining relations. Thus, what we have to prove is that, whenever $\alpha \in \LAB$ can be expressed both as
$$\alpha=g_{i_1,i_2,n_1}g_{i_2,i_3,n_2}\cdots g_{i_{k-1},i_{k},n_{k-1}}g_{i_k,i_{k+1},n_k}$$
and as
$$\alpha=g_{i_1,i_2,m_1}g_{i_2,i_3,m_2}\cdots g_{i_{k-1},i_{k},m_{k-1}}g_{i_k,i_{k+1},m_k}$$
with $1\leq n_j ,m_j\leq A_{i_j, i_{j+1}}$ for $1\leq j\leq k-1$ and $n_k, m_k\in \Z$, then $n_j=m_j$ for all $j$.

By Lemma \ref{Lem:Tecnic1LAB}, there exist $r_1, \dots, r_{k-1}, s_1, \dots , s_{k-1}\in \Z$ such that:
\begin{enumerate}
\item $n_1+r_1A_{i_1, i_2}=m_1+s_1A_{i_1, i_2}$,
\item $n_j+r_jA_{i_j, i_{j+1}}-r_{j-1}B_{i_j, i_{j+1}}=m_j+s_jA_{i_j, i_{j+1}}-s_{j-1}B_{i_j, i_{j+1}}$ for all $2\leq j\leq k-1$, and
\item $n_k-r_{k-1}B_{i_k, i_{k+1}}=m_k-s_{k-1}B_{i_k, i_{k+1}}$.
\end{enumerate}
Now, notice that $n_1+r_1A_{i_1, i_2}=m_1+s_1A_{i_1, i_2}$ implies $n_1-m_1=(s_1-r_1)A_{i_1, i_2}$. Since $1\leq n_1,m_1\leq A_{i_1, i_2}$, we get that $s_1=r_1$ and thus $n_1=m_1$.

Next, $n_2+r_2A_{i_2, i_{3}}-r_{1}B_{i_2, i_{3}}=m_2+s_2A_{i_2, i_{3}}-s_{1}B_{i_2, i_{3}}$, that by the previous computation implies $n_2-m_2=(s_2-r_2)A_{i_2, i_3}$. Again, since $1\leq n_2,m_2\leq A_{i_2, i_3}$, we get that $s_2=r_2$ and thus $n_2=m_2$.

Recurrence on this argument shows that $s_j=r_j$ and $n_j=m_j$ for all $j$, as desired.
\end{proof}

Recall from \cite[Section 14]{Exel1} that given any semigroupoid $\Lambda$, we define $\widetilde{\Lambda}=\Lambda\cup \{ 1\}$, and that an element $f\in \widetilde{\Lambda}$ is said to be monic if for every $g,h\in \widetilde{\Lambda}$ we have that $fg=fh$ implies $g=h$. Symmetrically, an element $f\in \widetilde{\Lambda}$ is said to be epic if for every $g,h\in \widetilde{\Lambda}$ we have that $gf=hf$ implies $g=h$.

\begin{proposition}\label{Prop:LAB-Monic}
Every element of $\LAB$ is monic.
\end{proposition}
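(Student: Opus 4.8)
The plan is to fix $\alpha\in\LAB$ and prove that the left–translation map $\beta\mapsto\alpha\beta$ is injective on the set of those $\beta\in\widetilde{\LAB}$ for which the product is defined. Since by Lemmas \ref{Lem:InitialFormLAB}--\ref{Lem:UniqueNormalFormLAB} every element of $\LAB$ has a \emph{unique} standard form, it suffices to work entirely with standard forms and to show that the standard form of $\beta$ can be recovered from that of $\alpha\beta$ once $\alpha$ is known. First I would record two invariants that are preserved by the defining relations and additive under the (defined) product: the $g$-length (the number of factors $g_{i,j,n}$) and the underlying vertex path in $\OmA$. Thus $\alpha\beta=\alpha\gamma$ forces $\beta$ and $\gamma$ to have the same $g$-length and the same vertex path; in particular they are of the same type (both of the form $h_i^t$, both genuine $g$-words, or both equal to $1$), since an $h$-word and $1$ contribute $g$-length $0$ while a $g$-word of length $b\ge 1$ raises the $g$-length by $b$.

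The crux is the case in which $\alpha=g_{i_1,i_2,m_1}\cdots g_{i_k,i_{k+1},m_k}$ is a genuine $g$-word in standard form (so $1\le m_j\le A_{i_j,i_{j+1}}$ for $j<k$ and $m_k\in\Z$) and $\beta$ is a $g$-word from $i_{k+1}$ with standard indices $p_1,\dots,p_b$. Concatenating and then reducing to standard form via the procedure of Lemma \ref{Lem:NormalFormLAB}, the first $k-1$ indices of $\alpha\beta$ are unchanged, the free index $m_k$ is reduced modulo $A_{i_k,i_{k+1}}$ producing a carry $t_1$ determined \emph{solely by $\alpha$}, and this carry then propagates to the right through $\beta$ exactly as in the computation in the proof of Lemma \ref{Lem:UniqueNormalFormLAB}. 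The point is that, because $t_1$ depends only on $m_k$ and on the matrix entries and not on $\beta$, the identical carry is produced for $\gamma$; replaying the carry argument of Lemma \ref{Lem:UniqueNormalFormLAB} position by position then yields $p_j\equiv p'_j$ modulo the relevant entry of $A$ with both in range, hence $p_j=p'_j$ and equal successive carries, and finally equality of the free last indices. This recovers $\beta=\gamma$, and in fact exhibits the reduction as reversible, so that $\beta$ is reconstructible from $\alpha\beta$.

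It remains to dispatch the degenerate cases, which are comparatively routine. When $\alpha=h_i^{t}$ is an $h$-word, left multiplication either sends $h_i^{s}\mapsto h_i^{t+s}$ or shifts the first index of a $g$-word $\beta$ by $t B_{i,j}$, where $j$ is the second vertex of $\beta$, and both operations are visibly invertible once $t$ is known; when $\beta$ or $\gamma$ equals $1$, the $g$-length and last–index bookkeeping shows the other factor must be $1$ as well (using $A_{i_k,i_{k+1}}\ge 1$ to rule out a nonzero $h$-exponent). I expect the main obstacle to be organizational rather than conceptual: namely, to phrase the carry propagation cleanly enough that one sees at a glance that every carry introduced at the junction between $\alpha$ and $\beta$ is a function of $\alpha$ alone, so that the uniqueness argument of Lemma \ref{Lem:UniqueNormalFormLAB} can be replayed verbatim on the suffix to cancel $\alpha$ on the left.
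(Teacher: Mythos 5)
Your proposal is correct and follows essentially the same route as the paper's proof: both arguments rest on the uniqueness of the standard form (Lemma \ref{Lem:UniqueNormalFormLAB}) together with a position-by-position ``carry'' propagation, organized as a case analysis over the types of the factors involved. The only differences are cosmetic — you make explicit the $g$-length and vertex-path invariants and treat the cases $\beta=1$ or $\gamma=1$ directly, points which the paper's four-case analysis leaves implicit, while the paper instead reduces its main case to a single-generator left factor.
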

\begin{proof}
We will write all the elements in standard form. Then, we need to distinguish various cases:\vspace{.2truecm}

(1) $f=h_i^t, g=h_j^r, h=h_k^s$. Then, the definition of $\LAB$ forces $i=j=k$, and since $fg=h_i^{t+r}=h_i^{t+s}=fh$, we conclude that $r=s$, i.e. $g=h$.\vspace{.2truecm}

(2) $f=h_{i_1}^t$,
$$g=g_{i_1,i_2,n_1}g_{i_2,i_3,n_2}\cdots g_{i_{k-1},i_{k},n_{k-1}}g_{i_k,i_{k+1},n_k}$$
and
$$h=g_{i_1,i_2,m_1}g_{i_2,i_3,m_2}\cdots g_{i_{k-1},i_{k},m_{k-1}}g_{i_k,i_{k+1},m_k}.$$
Then,
$$fg=fh=g_{i_1,i_2,p_1}g_{i_2,i_3,p_2}\cdots g_{i_{k-1},i_{k},p_{k-1}}g_{i_k,i_{k+1},p_k}$$
in standard form, which means that there exist $l_2, \dots ,l_{k},t_2, \dots ,t_{k}\in \Z$ such that:
\begin{enumerate}
\item[(i)] $p_1=n_1+lB_{i_1,i_2}-l_2A_{i_1,i_2}=m_1+lB_{i_1,i_2}-t_2A_{i_1,i_2}$,
\item[(ii)] $p_j=n_j+l_jB_{i_j,i_{j+1}}-l_{j+1}A_{i_j,i_{j+1}}=m_j+t_jB_{i_j,i_{j+1}}-t_{j+1}A_{i_j,i_{j+1}}$ for all $2\leq j\leq k-1$, and
\item[(iii)] $p_k=n_k+l_kB_{i_k,i_{k+1}}=m_k+t_kB_{i_k,i_{k+1}}$.
\end{enumerate}
The same argument as in the proof of Lemma \ref{Lem:UniqueNormalFormLAB} shows that $l_j=t_j$ and thus $n_j=m_j$ for all $j$, so that $g=h$.\vspace{.2truecm}

(3) $f=g_{i_1,i_2,n_1}g_{i_2,i_3,n_2}\cdots g_{i_{k-1},i_{k},n_{k-1}}g_{i_k,i_{k+1},n_k}$ in standard form, while $g=h_i^r, h=h_j^s$. Thus, we have $i=j=i_{k+1}$ and $n_k+r=n_k+s$, so that $r=s$ and thus $g=h$.\vspace{.2truecm}

(4) $f=g_{j_1,j_2,n_1}g_{j_2,j_3,n_2}\cdots g_{j_{l-1},j_{l},n_{l-1}}g_{j_l,j_{l+1},n_l}$, while $$g=g_{i_1,i_2,m_1}g_{i_2,i_3,m_2}\cdots g_{i_{k-1},i_{k},m_{k-1}}g_{i_k,i_{k+1},m_k}$$
and
$$h=g_{i_1,i_2,p_1}g_{i_2,i_3,p_2}\cdots g_{i_{k-1},i_{k},p_{k-1}}g_{i_k,i_{k+1},p_k}$$
in standard form. By Lemma \ref{Lem:UniqueNormalFormLAB} there is no loss of generality in assuming that $l=1$, and that $f=g_{i_0,i_1,n_0}$ with $n_0\in \Z$. Now,
$$fg=fh=g_{i_0,i_1,q_0}g_{i_1,i_2,q_1}g_{i_2,i_3,q_2}\cdots g_{i_{k-1},i_{k},q_{k-1}}g_{i_k,i_{k+1},q_k}$$
in normal form, so that there exist $l_1, \dots ,l_{k},t_1, \dots ,t_{k}\in \Z$ such that:
\begin{enumerate}
\item[(i)] $q_0=n_0-l_1A_{i_0,i_1}=n_0-t_1A_{i_0,i_1}$,
\item[(ii)] $q_j=m_j+l_jB_{i_j,i_{j+1}}-l_{j+1}A_{i_j,i_{j+1}}=p_j+t_jB_{i_j,i_{j+1}}-t_{j+1}A_{i_j,i_{j+1}}$ for all $1\leq j\leq k-1$, and
\item[(iii)] $q_k=m_k+l_kB_{i_k,i_{k+1}}=p_k+t_kB_{i_k,i_{k+1}}$.
\end{enumerate}
Again the same argument as in the proof of Lemma \ref{Lem:UniqueNormalFormLAB} shows that $l_j=t_j$ and thus $m_j=p_j$ for all $j$, so that $g=h$.
\end{proof}

By an analogue argument, we can show the characterization of epic elements in $\LAB$. For this we need a definition.

\begin{definition}\label{Def:Cond(E)}
{\rm We say that the matrix $B$ satisfies Condition (E) when $B_{i,j}=0$ if and only if $(i,j)\not\in \OmA$.
}
\end{definition}

\begin{lemma}\label{Lem:LAB-Epic}
Every element of $\LAB$ is epic if and only if $B$ satisfies Condition (E).
\end{lemma}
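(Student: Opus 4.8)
The plan is to prove the two implications of the equivalence separately, dispatching necessity quickly and concentrating effort on sufficiency.

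\textbf{Necessity.} I would argue by contraposition. Condition (0) already forces $B_{i,j}=0$ whenever $(i,j)\notin\OmA$, so $B$ \emph{fails} Condition (E) exactly when there is a pair $(i,j)\in\OmA$ with $B_{i,j}=0$. For such a pair the second relation of Definition \ref{Def:semigroupoidAB} degenerates to $h_i\,g_{i,j,n}=g_{i,j,n+B_{i,j}}=g_{i,j,n}=1\cdot g_{i,j,n}$ in $\widetilde{\LAB}$, and since $h_i\neq 1$ this exhibits $g_{i,j,n}$ as a non-epic element. Hence if every element of $\LAB$ is epic, Condition (E) must hold.

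\textbf{Sufficiency strategy.} Assume $B_{i,j}\neq 0$ for every $(i,j)\in\OmA$. The conceptual shortcut I would use is that epic elements are stable under the partial product: if $f_1,f_2$ are epic and $f_1f_2$ is defined, then $g(f_1f_2)=h(f_1f_2)$ gives $(gf_1)f_2=(hf_1)f_2$, so $gf_1=hf_1$ and then $g=h$. By Lemma \ref{Lem:InitialFormLAB} every element of $\LAB$ is either a power $h_i^t$ or a product of the generators $g_{i,j,n}$, so once the generators are handled, the general case follows by peeling one factor at a time from the right. The powers $h_i^t$ are epic unconditionally: in $g\,h_i^t=h\,h_i^t$ the type of $g,h$ (a power of $h_i$ versus a product of $g$'s) is detected by length, and right multiplication by $h_i^t$ only translates the single recoverable index, so Lemma \ref{Lem:UniqueNormalFormLAB} yields $g=h$. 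The work is thus concentrated in showing each generator $g_{i,j,n}$ is epic, and this is the sole place Condition (E) enters.

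\textbf{The crux.} Suppose $g\,g_{i,j,n}=h\,g_{i,j,n}$. A length count matches the types of $g$ and $h$; the essential case is $g=g_{i_1,i_2,\alpha_1}\cdots g_{i_k,i,\alpha_k}$ and $h=g_{i_1,i_2,\beta_1}\cdots g_{i_k,i,\beta_k}$ in standard form, the common shape being forced because length and the edge pattern are preserved by the relations. Writing $\alpha_k=\gamma+tA_{i_k,i}$ with $1\leq\gamma\leq A_{i_k,i}$, Lemma \ref{Lem:Tecnic1LAB} brings $g\,g_{i,j,n}$ to standard form with final index $n+tB_{i,j}$; likewise $h\,g_{i,j,n}$ has final index $n+sB_{i,j}$, where $\beta_k=\gamma'+sA_{i_k,i}$. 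Uniqueness of the standard form (Lemma \ref{Lem:UniqueNormalFormLAB}) then gives $\alpha_l=\beta_l$ for $l<k$, $\gamma=\gamma'$, and $(t-s)B_{i,j}=0$. The decisive step is that $(i,j)\in\OmA$ together with Condition (E) gives $B_{i,j}\neq 0$, whence $t=s$ and $\alpha_k=\beta_k$, so $g=h$; the remaining case $g,h\in\{1\}\cup\{h_i^r\}$ is identical, with final indices $n+rB_{i,j}$ and $n+sB_{i,j}$ forcing $r=s$. I expect the main obstacle to be exactly this recovery of the \emph{unbounded} final index: whereas Proposition \ref{Prop:LAB-Monic} reads off each index from its confinement to $\{1,\dots,A_{i_j,i_{j+1}}\}$, here the last index is free and the only handle on the absorbed powers of $h$ is the coefficient $B_{i,j}$, which vanishes precisely when Condition (E) fails. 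Iterating the single-generator case along the factorization of a $g$-product then performs the backward induction implicit in this argument and completes the proof.
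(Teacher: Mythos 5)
Your proof is correct, and it is essentially the argument the paper intends: the paper in fact prints no proof of this lemma, asserting only that it follows ``by an analogue argument'' to Proposition \ref{Prop:LAB-Monic}, i.e., by writing both sides in standard form, invoking the uniqueness of Lemma \ref{Lem:UniqueNormalFormLAB}, and observing that the unbounded final index is recoverable precisely because $B_{i,j}\neq 0$ for $(i,j)\in\OmA$ --- which is exactly your crux. Your two supplements to that sketch --- the contrapositive for necessity (if $B_{i,j}=0$ with $(i,j)\in\OmA$, then $h_i\,g_{i,j,n}=1\cdot g_{i,j,n}$ with $h_i\neq 1$ defeats epicness) and the reduction to generators via closure of epic elements under the partial product (legitimate by the associativity laws noted in Remark \ref{Rem:LABwelldefined}) --- are both valid and make the paper's implicit argument complete.
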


\begin{noname}
{\rm Given two elements $f,g$ in a semigroupoid $\Lambda$, recall that:
\begin{enumerate}
\item $f$ divides $g$ (or $g$ is a multiple of $f$), denoted $f\vert g$, if either $f=g$ or there exists $h\in \Lambda$ such that $fh=g$.
\item $f$ and $g$ intersect if they admit a common multiple, i.e. there exists $m\in \Lambda$ such that both $f\vert m$ and $g\vert m$. We denote this relation by $f\Cap g$.
\item $f$ and $g$ are disjoint if they do not intersect, and we denote it by $f\perp g$.
\item If $f,g\in \Lambda$ and $f\Cap g$, we say that $m\in \Lambda$ is a least common multiple if it is a common multiple, and whenever $h\in \Lambda$ is a common multiple of $f$ and $g$, then $m\vert h$. If it is unique, we will denote it by $m=\text{lcm}(f,g)$.
\end{enumerate}}
\end{noname}

Then, we have the following result.

\begin{proposition}\label{Prop:existsLCMinLAB}
There exists a unique least common multiple for any pair of intersecting elements of $\LAB$.
\end{proposition}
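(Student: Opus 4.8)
The plan is to reduce everything to the standard form of Lemma \ref{Lem:NormalFormLAB}, to isolate as a preliminary step a precise criterion for divisibility, and then to observe that any two \emph{intersecting} elements are in fact comparable for the divisibility order; the least common multiple is then simply the larger of the two, and its uniqueness drops out.

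First I would record the divisibility criterion. Write a path $\alpha$ in standard form with vertex sequence $(i_1,\dots,i_{k+1})$ and decorations $a_1,\dots,a_k$, where $1\le a_r\le A_{i_r,i_{r+1}}$ for $r<k$ and $a_k\in\Z$. Pushing the defining relations of Definition \ref{Def:semigroupoidAB} and Lemma \ref{Lem:Tecnic1LAB} through the normalization of Lemma \ref{Lem:NormalFormLAB}, one shows that $\alpha\mid\mu$ holds exactly when the vertex sequence of $\alpha$ is an initial segment of that of $\mu$, the decorations of $\mu$ in positions $1,\dots,k-1$ equal $a_1,\dots,a_{k-1}$, and the $k$-th decoration of $\mu$ is congruent to $a_k$ modulo $A_{i_k,i_{k+1}}$; moreover, when $\mu$ has length exactly $k$ this congruent value must in addition be $\ge a_k$, since right multiplication in $\widetilde{\LAB}$ can only raise the terminal exponent through non-negative powers of $h_{i_{k+1}}$. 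The group-type cases are easier: $h_i^t$ and $h_j^s$ have a common multiple only if $i=j$, in which case $h_i^{\min(t,s)}\mid h_i^{\max(t,s)}$, and $h_i^t$ divides every path whose source is $i$ (choose the first decoration of the cofactor to absorb the shift by $t B_{i,\cdot}$). The only delicate point here is the bookkeeping of the $A$- and $B$-shifts, which is exactly the linear juggling already performed in Lemma \ref{Lem:UniqueNormalFormLAB} and Proposition \ref{Prop:LAB-Monic}.

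The heart of the argument is then the comparability of intersecting elements. Suppose $\alpha$ and $\beta$ intersect and let $\mu$ be a common multiple, say with $\alpha$ no longer than $\beta$ and $k$ the length of $\alpha$. Since the length and the sequence of pairs $(i_r,i_{r+1})$ are invariant under the relations (as observed at the start of the proof of Lemma \ref{Lem:UniqueNormalFormLAB}), the vertex sequences of both $\alpha$ and $\beta$ are initial segments of that of $\mu$, whence that of $\alpha$ is an initial segment of that of $\beta$. Comparing $\mu$'s decorations against those of $\alpha$ and of $\beta$ forces $a_r=b_r$ for $r<k$ and $a_k\equiv b_k\pmod{A_{i_k,i_{k+1}}}$, which by the criterion above is precisely the statement that $\alpha\mid\beta$ (when the two have equal length one instead concludes that the element with the larger terminal decoration is a multiple of the other). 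In either case one of the two elements is itself a common multiple of the pair; since every common multiple is by definition a multiple of it, it is a least common multiple. The disjoint cases (incompatible vertex data, or $a_r\ne b_r$ somewhere on the overlap, or incongruent terminal decorations) are exactly those excluded by the hypothesis that the elements intersect.

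For uniqueness, suppose $m$ and $m'$ are both least common multiples, so that $m\mid m'$ and $m'\mid m$. Invariance of length and of the vertex sequence forces $m$ and $m'$ to share both; the criterion then forces their decorations in positions $1,\dots,k-1$ to agree, and the equal-length clause of the criterion forces the terminal decorations to be congruent and each $\ge$ the other, hence equal; thus $m=m'$. It is worth stressing that this final collapse is exactly where the absence of negative powers of the $h_i$ is used: without it, mutual divisibility among equal-length, equal-vertex elements would give equality only up to a terminal factor $h_{t(\cdot)}^{r}$, and uniqueness would fail. The main obstacle of the whole proof is therefore concentrated in establishing comparability together with the congruence bookkeeping behind the divisibility criterion; once these are in place, both existence and uniqueness of the least common multiple are immediate.
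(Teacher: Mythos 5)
Your proof is correct, and for the existence half it is in substance the paper's own argument: the paper also reduces to the standard form of Lemma \ref{Lem:NormalFormLAB} and runs a three-way case analysis (two powers of $h$, a power of $h$ against a path, two paths), in each case exhibiting---exactly as you do---that one of the two intersecting elements already divides the other, so that the larger one is automatically the least common multiple. What you do differently is organizational: you isolate an explicit divisibility criterion in terms of standard forms (vertex sequence an initial segment, equality of decorations in positions $1,\dots,k-1$, congruence modulo $A_{i_k,i_{k+1}}$ in position $k$, plus the inequality clause for equal lengths) and then state comparability of intersecting elements as a single claim, whereas the paper leaves both facts implicit in the computations of its three cases. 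The genuine divergence is in uniqueness: the paper deduces it from the fact that every element of $\LAB$ is monic (Proposition \ref{Prop:LAB-Monic}), citing \cite[14.8]{Exel1}, while you prove it directly from mutual divisibility and your criterion, pinpointing that the collapse rests on the absence of negative powers of the $h_i$, i.e.\ of invertibles in $\widetilde{\LAB}$. Your route is self-contained and makes the mechanism of uniqueness visible; the paper's is shorter and leans on the general semigroupoid theory it is already importing. One caveat: your reading of the powers $h_i^t$ as having $t\geq 1$ is the correct one, forced by Definition \ref{Def:semigroupoidAB} and used throughout the paper's own proofs, even though Lemma \ref{Lem:InitialFormLAB} misleadingly writes $t\in\Z$; with genuine negative powers both your uniqueness argument and the statement itself would fail, as you rightly observe.
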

\begin{proof}
We will assume that $f,g\in \Lambda$ and $f\Cap g$, and we will prove the existence of a least common multiple for that pair. We need to distinguish various cases:\vspace{.2truecm}

(1) $f=h_i^t, g=h_j^r$. Then, if $m$ is a common multiple, since either there exist $f', g'\in \LAB$ such that $ff'=gg'$ or $m=f$ or $m=g$, the definition of $\LAB$ forces $i=j$. Conversely, if $i=j$, and we assume $t< r$  (when $t=r$, then $f=g$ and there is nothing to prove), then $fh_i^{r-t}=g$.\vspace{.2truecm}

(2) $f=h_{j}^t$ and $g=g_{i_1,i_2,n_1}g_{i_2,i_3,n_2}\cdots g_{i_{k-1},i_{k},n_{k-1}}g_{i_k,i_{k+1},n_k}$. As in case (1), since $f\Cap g$, we have that $j=i_1$. Conversely, if $j=i_1$, then
$$fg_{i_1,i_2,n_1-tB_{i_1,i_2}}g_{i_2,i_3,n_2}\cdots g_{i_{k-1},i_{k},n_{k-1}}g_{i_k,i_{k+1},n_k}=g.$$\vspace{.2truecm}

Notice that the argument in (2) shows that:
\begin{enumerate}
\item[(i)] In case (1) $m=h_i^{r}$ must be a least common multiple for $f$ and $g$.
\item[(ii)] In case (2) $m=g$ must be a least common multiple for $f$ and $g$.
\end{enumerate}\vspace{.2truecm}

(3) $f=g_{i_1,i_2,n_1}g_{i_2,i_3,n_2}\cdots g_{i_{k-1},i_{k},n_{k-1}}g_{i_k,i_{k+1},n_k}$ and
$g=g_{j_1,j_2,m_1}g_{j_2,j_3,m_2}\cdots g_{j_{l-1},j_{l},m_{l-1}}g_{j_l,j_{l+1},m_l}$, written in standard form. Since $f\Cap g$, there exist $f', g'\in \LAB$ such that $ff'=gg'$. Hence, if $k\leq l$ then we have that $j_t=i_t$ for all $1\leq t\leq k$ and $n_t=m_t$ for all $1\leq t\leq k-1$. Moreover, define $\widehat{f}=g_{i_1,i_2,n_1}g_{i_2,i_3,n_2}\cdots g_{i_{k-1},i_{k},n_{k-1}}$, so that $f=\widehat{f}g_{i_k,i_{k+1},n_k}$ and $g=\widehat{f}g_{i_k,i_{k+1},m_k}g_{i_{k+1},i_{k+2},m_{k+1}}g_{i_{k+2},i_{k+3},m_{k+2}}\cdots g_{i_l,i_{l+1},m_l}$. Since
$$\widehat{f}g_{i_k,i_{k+1},n_k}f'=\widehat{f}g_{i_k,i_{k+1},m_k}g_{i_{k+1},i_{k+2},m_{k+1}}g_{i_{k+2},i_{k+3},m_{k+2}}\cdots g_{i_l,i_{l+1},m_l}g',$$
Lemma \ref{Lem:Tecnic1LAB} implies that $n_k-m_k=tA_{i_k,i_{k+1}}$ for some $t\in \Z$. Conversely, if these restrictions hold, then:
\begin{enumerate}
\item[(i)] If $k<l$, then
$$fg_{i_{k+1},i_{k+2},m_{k+1}- t B_{i_{k+1},i_{k+2}}}g_{i_{k+2},i_{k+3},m_{k+2}}\cdots g_{i_l,i_{l+1},m_l}=g,$$
so that $f\Cap g$ and $g$ is a least common multiple for that pair.
\item[(ii)] If $k=l$ and $f\ne g$, since $n_k=m_k+tA_{i_k,i_{k+1}}$  for some $t\in \Z$, we can assume that $t> 0$ (the other case is symmetric), and thus
$$gh_{i_{k+1}}^t= \widehat{f}g_{i_k,i_{k+1},m_k}h_{i_{k+1}}^t=\widehat{f}g_{i_k,i_{k+1},n_k}=f,$$
so that $f$ is a least common multiple for the pair.
\end{enumerate}
Uniqueness is a consequence of Proposition \ref{Prop:LAB-Monic} (see \cite[14.8]{Exel1}), so we are done.
\end{proof}

\begin{remark}\label{Rem:LAB-StandingHypothesis}
{\rm Notice that $\LAB$ satisfies the Standing Hypothesis \cite[14.7]{Exel1} because of Proposition \ref{Prop:LAB-Monic} and Proposition \ref{Prop:existsLCMinLAB}
}
\end{remark}

Given an element $f$ in a semigroupoid $\Lambda$, we define $\Lambda^f=\{ g\in \Lambda \mid (f,g)\in \Lambda^{(2)}\}$. We say that an element is a spring if $\Lambda^f=\emptyset$. We say that $\Lambda$ is categorical if given $f,g\in \Lambda$ then either $\Lambda^f=\Lambda^g$ or $\Lambda^f\cap\Lambda^g=\emptyset$. Then, we have the following result.

\begin{lemma}\label{Lem:LAB-CategoricalNoSprings}
The semigroupoid $\LAB$ has no springs and is categorical.
\end{lemma}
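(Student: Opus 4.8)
The plan is to reduce both assertions to a single structural fact: for any two elements $f,g\in\LAB$, the pair $(f,g)$ is admissible (i.e.\ $g\in\LAB^f$) precisely when the ``target'' index of $f$ coincides with the ``source'' index of $g$. First I would make these notions precise. Using the standard form of Lemma~\ref{Lem:NormalFormLAB} together with the observation (already recorded in the proof of Lemma~\ref{Lem:UniqueNormalFormLAB}) that the boundary indices $(i_j,i_{j+1})$ are invariant under the defining relations, I set $s(h_i^t)=t(h_i^t)=i$ and, for a path $\alpha=g_{i_1,i_2,n_1}\cdots g_{i_k,i_{k+1},n_k}$, set $s(\alpha)=i_1$ and $t(\alpha)=i_{k+1}$. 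Since the relations of Definition~\ref{Def:semigroupoidAB} and Lemma~\ref{Lem:Tecnic1LAB} do not alter these endpoints, $s$ and $t$ are well defined on all of $\LAB$.

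Next I would verify directly on the four generating families of admissible pairs in Definition~\ref{Def:semigroupoidAB}(1) that in each case the target of the left factor equals the source of the right factor, and conversely that whenever $t(f)=s(g)$ the product $fg$ can actually be formed by concatenating the standard forms and reducing via Lemma~\ref{Lem:Tecnic1LAB}. This yields the clean description
$$\LAB^f=\{\,g\in\LAB \mid s(g)=t(f)\,\},$$
so that $\LAB^f$ depends on $f$ only through $t(f)$. Given this, the absence of springs is immediate: for any $f$ put $i=t(f)$; since Condition~(0) guarantees $\OmA(i)\neq\emptyset$ there is some $g_{i,j,n}\in\LAB$ with source $i$, and in any case $h_i\in\LAB$ has source $i$, whence $h_i\in\LAB^f$ and $\LAB^f\neq\emptyset$. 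Categoricity is equally immediate: if $t(f)=t(g)$ then $\LAB^f$ and $\LAB^g$ are cut out by the same condition and coincide, while if $t(f)\neq t(g)$ any common element $h$ of $\LAB^f\cap\LAB^g$ would satisfy both $s(h)=t(f)$ and $s(h)=t(g)$, forcing $t(f)=t(g)$, a contradiction; hence $\LAB^f\cap\LAB^g=\emptyset$.

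The only genuinely delicate point is the converse direction of the second step, namely confirming that matching target to source really does produce an admissible pair and that no defining relation secretly identifies elements carrying different source/target data. This is exactly what the standard-form machinery of Lemmas~\ref{Lem:Tecnic1LAB}--\ref{Lem:UniqueNormalFormLAB} provides: it shows both that the relations preserve the boundary indices and that the concatenation of two composable standard forms again reduces to a standard form. Once that is in place, everything else is a direct read-off from the definition of $\LAB$.
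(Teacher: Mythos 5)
Your proof is correct and follows essentially the same route as the paper: the paper's argument is precisely that $\LAB^f$ depends only on the terminal index of $f$ (so $\LAB^f=\LAB^{h_j}$ where $j=t(f)$), whence $h_j\in\LAB^f$ rules out springs and categoricity follows from disjointness of the sets $\LAB^{h_j}$. Your write-up merely makes explicit the well-definedness of the source/target maps via the standard-form lemmas, which the paper leaves implicit.
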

\begin{proof}
Given any $f\in \LAB$, $f$ is either $h_j^t$ of $\widehat{f}g_{i,j,n}$. Because of Definition \ref{Def:semigroupoidAB}, in  both cases $\LAB^f=\LAB^{h_j}$. Then, $h_j\in \LAB^f$, so that $\LAB$ has no springs. The property of being categorical is then obvious.
\end{proof}

Given a semigroupoid $\Lambda$, and $\Gamma \subseteq \Lambda$ a subset, we say that a subset $H\subset \Gamma$ is a cover for $\Gamma$ if for every $f\in \Gamma$ there exists $h\in H$ such that $f\Cap h$. If moreover the elements of $H$ are mutually disjoint then $H$ is called a partition \cite[Definition 15.3]{Exel1}.

\begin{proposition}\label{Prop:LAB-ExistsFinitePartition}
For every $f\in \LAB$ and for every $h\in \LAB^f$ there exists a finite partition $H\subseteq \LAB^f$ such that $h\in H$.
\end{proposition}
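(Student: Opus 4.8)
The plan is to first use Lemma \ref{Lem:LAB-CategoricalNoSprings} to reduce the statement to one depending only on the terminal vertex of $f$. Writing $j$ for the vertex at which $f$ ends, that lemma gives $\LAB^f=\LAB^{h_j}$, and by Definition \ref{Def:semigroupoidAB} this set is exactly the collection of all elements of $\LAB$ that \emph{begin} at $j$, i.e. those of the form $h_j^s$ ($s\in\Z$) or $g_{j,i_2,n_1}g_{i_2,i_3,n_2}\cdots$. If the given $h$ equals some $h_j^s$, I would simply take $H=\{h_j^s\}$: this is trivially a set of mutually disjoint elements, and it is a cover because, by case (2) of the proof of Proposition \ref{Prop:existsLCMinLAB}, $h_j^s$ intersects every element beginning at $j$. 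Thus the only real content is the case in which $h$ is a $g$-word, which I would treat by induction on its length $k$.

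For the base of the induction I would single out the finite set
$$H_0^{(j)}=\{\,g_{j,k',n'}\mid (j,k')\in\OmA,\ 1\le n'\le A_{j,k'}\,\},$$
which is finite because $\OmA(j)$ is finite and each $A_{j,k'}$ is finite. Using the intersection criterion extracted from case (3) of the proof of Proposition \ref{Prop:existsLCMinLAB} --- two standard-form $g$-words intersect if and only if the shorter one is, up to a shift of its last index by a multiple of the relevant $A$-entry, an initial segment of the longer one --- one checks that $H_0^{(j)}$ is a partition: any two distinct members have length-one standard form with index in range, hence are disjoint; and any $g$-word beginning at $j$ has a standard first generator $g_{j,i_2,q_1}$ with $1\le q_1\le A_{j,i_2}$ lying in $H_0^{(j)}$ and intersecting it (while every $h_j^s$ is caught as above). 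This settles $k=1$, since a length-one $h$ in standard form already lies in $H_0^{(j)}$.

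For the inductive step, write $h=a\,h'$ with $a=g_{j,i_2,m_1}\in H_0^{(j)}$ and $h'$ a $g$-word of length $k-1$ beginning at $i_2$. By the inductive hypothesis there is a finite partition $H'$ of $\LAB^{h_{i_2}}$, consisting of $g$-words, with $h'\in H'$. I would then refine $H_0^{(j)}$ only at $a$, setting
$$H=\bigl(H_0^{(j)}\setminus\{a\}\bigr)\cup\{\,a\,g'\mid g'\in H'\,\},$$
which is finite, consists of $g$-words, and contains $h=a\,h'$. Here one uses that, because $m_1$ is already in the range $[1,A_{j,i_2}]$ and the relations of Lemma \ref{Lem:Tecnic1LAB} alter the first index only by multiples of $A_{j,i_2}$, the standard form of each $a\,g'$ still begins with $g_{j,i_2,m_1}$; this keeps the members $a\,g'$ disjoint from the surviving $g_{j,k',n'}\in H_0^{(j)}\setminus\{a\}$ via the same length-one criterion.

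The crucial point, and the one I expect to be the main obstacle, is the mutual disjointness of the refined pieces $a\,g'$ together with the covering property at the branch $a$. For disjointness I would prove the cancellation fact that, for $a$ and any $g',g''$ making $a\,g',a\,g''$ defined, $a\,g'\Cap a\,g''$ holds if and only if $g'\Cap g''$: one direction is immediate, since $g'\mid m$ and $g''\mid m$ give $a\,g'\mid a\,m$ and $a\,g''\mid a\,m$; the converse uses that $a$ is monic (Proposition \ref{Prop:LAB-Monic}) to cancel $a$ from a common multiple $a\,g'\,u=a\,g''\,v$ and obtain the common multiple $g'u=g''v$ of $g',g''$. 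Since distinct members of $H'$ are disjoint, their images under left multiplication by $a$ are disjoint as well. For the cover, an arbitrary $g$ beginning at $j$ whose standard first generator is not $a$ is caught by a surviving member of $H_0^{(j)}$; if its first generator is $a$ I would write $g=a\,g'''$ (or, in the exceptional length-one case, compare indices modulo $A_{j,i_2}$ directly), use that $H'$ covers $\LAB^{h_{i_2}}$ to find $g'\in H'$ with $g'''\Cap g'$, and conclude $g\Cap a\,g'$ by the easy direction of the cancellation fact. This completes the induction.
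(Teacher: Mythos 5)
Your overall strategy is the paper's own (a rooted-tree refinement of the generators along the path $h$, organised as an induction on length rather than the paper's explicit iterative construction $H_1\subset H_2\subset\cdots$), and your cancellation fact --- that $a\,g'\Cap a\,g''$ iff $g'\Cap g''$, proved by cancelling the monic element $a$ via Proposition \ref{Prop:LAB-Monic} --- is a nice way to make rigorous the disjointness assertions the paper leaves implicit. However, there is a genuine gap in the base case, and it propagates through the whole induction. You claim that ``a length-one $h$ in standard form already lies in $H_0^{(j)}$''. This is false: by Lemma \ref{Lem:NormalFormLAB} (and the uniqueness in Lemma \ref{Lem:UniqueNormalFormLAB}), the standard form constrains only the first $k-1$ indices to lie in the ranges $[1,A_{i_j,i_{j+1}}]$; the \emph{last} index is an arbitrary integer, and for a length-one word this means no constraint at all. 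So $h=g_{j,i_2,m_1}$ with $m_1=0$, say, is in standard form but does not belong to $H_0^{(j)}$, and your base case then produces a finite partition (that part is correct) which fails the requirement $h\in H$. The defect is not confined to $k=1$: in your inductive step you apply the inductive hypothesis to the tail $h'$, whose last index is exactly the unconstrained last index $m_k$ of $h$; unrolling the recursion, the bottom call must produce a partition of $\LAB^{h_{i_k}}$ containing $g_{i_k,i_{k+1},m_k}$ with $m_k$ arbitrary, which your base case cannot do. Thus for every $h$ whose final index lies outside $[1,A_{i_k,i_{k+1}}]$, your $H$ contains the in-range representative of $h$ but not $h$ itself.

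The repair is small and is exactly the device the paper uses at the final level of its tree (the shift $n+tA_{i_k,l}$ in step (iv) of its proof): writing $m_1=n_1+tA_{j,i_2}$ with $1\le n_1\le A_{j,i_2}$ and $t\in\Z$, replace the block $\{g_{j,i_2,n}\mid 1\le n\le A_{j,i_2}\}$ of $H_0^{(j)}$ by its translate $\{g_{j,i_2,n+tA_{j,i_2}}\mid 1\le n\le A_{j,i_2}\}$ (equivalently, just swap $g_{j,i_2,n_1}$ for $h$). Since the intersection criterion for standard-form words only sees the last index modulo the relevant $A$-entry, both the mutual disjointness and the covering property of the modified family are unaffected, and now $h$ belongs to it. With this one-line correction your induction goes through and recovers the paper's result; the rest of your argument (the treatment of $h=h_j^s$, the disjointness of $a\,g'$ from the surviving length-one generators via uniqueness of standard forms, and the covering argument at the branch $a$) is sound.
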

\begin{proof}
As noticed in the proof of Lemma \ref{Lem:LAB-CategoricalNoSprings}, we can assume that $f=h_j$ for some $1\leq j\leq N$. We will distinguish two cases:\vspace{.2truecm}

(1) $h=h_j^t$ with $t\in\N$. Then, by the arguments in the proof of Proposition \ref{Prop:existsLCMinLAB}, every $g\in \LAB^{h_j}$ intersects with $h_i$, so that it is enough to fix $H:=\{ h_j\}$.\vspace{.2truecm}

(2) $h=g_{i_1,i_2,m_1}g_{i_2,i_3,m_2}\cdots g_{i_{k-1},i_{k},m_{k-1}}g_{i_k,i_{k+1},m_k}$ (written in standard form) with $i_1=j$. Then, $1\leq m_j\leq A_{i_j, i_{j+1}}$ for $1\leq j\leq k-1$ and $m_k\in \Z$, so that there exists exactly one $t\in \Z$ such that $m_k\in [tA_{i_k,i_{k+1}}+1, (t+1)A_{i_k,i_{k+1}}]$ (i.e., $m_k=n_k+tA_{i_k,i_{k+1}}$ for a unique $1\leq n_k\leq A_{i_k,i_{k+1}}$ and a unique $t\in \Z$). Now, we will construct the finite partition $H$ using a ``rooted tree'' with ``root'' $i_1$ and paths constructed using as ``edges'' the pairs $(i_j, i_{j+1})$ appearing in the expression of $h$; the idea is to use a minimal expansion of the root containing the path associated to $h$, and then complete this expansion in each vertex adding as many generators as $A_{i,j}$. Notice that, by hypothesis, fixed any $i_j$ ($1\leq j\leq k$), the set $\OmA(i_j)$ is finite, and thus so is the set of elements $p\in \N$ such that $1\leq p\leq A_{i_j,l}$ for any $l\in \OmA(i_j)$. Now, we define $H$ by recurrence, as follows:
\begin{enumerate}
\item[(i)] Define $H_1:=\{ g_{i_1, l, m} \mid l\in \OmA(i_1), 1\leq l\leq A_{i_1,l}\}\setminus \{  g_{i_1, i_2, m_1}\}$. Notice that, by the argument in the proof of Proposition \ref{Prop:existsLCMinLAB}, the elements of $H_1$ are mutually disjoint. Moreover, every element of $\LAB^{h_{i_1}}$, except these elements starting in $ g_{i_1, i_2, m_1}$, intersects with exactly one element of $H_1$. Also, $H_1$ is finite.
\item[(ii)] Define $H_2:=H_1\cup (\{ g_{i_1, i_2, m_1}g_{i_2, l, m} \mid l\in \OmA(i_2), 1\leq m\leq A_{i_2,l}\}\setminus \{  g_{i_1, i_2, m_1}g_{i_2, i_3, m_2}\})$. As in  case (i), the elements of $H_2$ are mutually disjoint, and every element of $\LAB^{h_{i_1}}$, except these elements starting in $ g_{i_1, i_2, m_1}g_{i_2, i_3, m_2}$, intersects with exactly one element of $H_2$. Also, $H_2$ is finite.
\item[(iii)] Suppose defined $H_j$, $2\leq j<k-1$, and define $H_{j+1}:=H_j\cup (\{ g_{i_1, i_2, m_1}g_{i_2, i_3, m_2} \dots g_{i_j,l,m}\mid l\in \OmA(i_j), 1\leq m\leq A_{i_j,l}\}\setminus \{  g_{i_1, i_2, m_1}g_{i_2, i_3, m_2} \dots g_{i_j,i_{j+1},m_j}\})$. As in the case (ii), the elements of $H_{j+1}$ are mutually disjoint, and every element of $\LAB^{h_{i_1}}$, except these elements starting in $ g_{i_1, i_2, m_1}g_{i_2, i_3, m_2}\dots g_{i_j,i_{j+1},m_j}$, intersects with exactly one element of $H_{j+1}$. Also, $H_{j+1}$ is finite.
\item[(iv)] Define $H:=H_{k-1}\cup (\{ g_{i_1, i_2, m_1}g_{i_2, i_3, m_2} \dots g_{i_k,l,n+tA_{i_k,l}}\mid l\in \OmA(i_j), 1\leq n\leq A_{i_k,l}\})$. As in  case (iii), the elements of $H$ are mutually disjoint, and every element of $\LAB^{h_{i_1}}$ intersects with exactly one element of $H$. Also, $H$ is finite, and $h\in H$.
\end{enumerate}
So we are done.
\end{proof}

\begin{noname}\label{SAB}
{\rm Now, we will describe the inverse semigroup $\SAB$ associated to $\LAB$. In order to define the elements of $\SAB$, by Lemma \ref{Lem:LAB-CategoricalNoSprings}, we have
$$\mathcal{Q}=\{ \LAB^f \mid f\in \LAB\}\cup \{ \emptyset\}=\{ \LAB^{h_i}\mid 1\leq i\leq N\}\cup \{ \emptyset\}.$$
hence,
$$\SAB=\{ (f, \LAB^f, g) \mid f,g\in \LAB\}\cup \{ (f, \LAB^f, 1) \mid f\in \LAB\}\cup
\{ (1, \LAB^g, g) \mid g\in \LAB\}$$
$$\cup \{ (1, \LAB^{h_i}, 1) \mid 1\leq i\leq N\}$$
with operation defined in \cite[Definition 14.15]{Exel1}. Notice that this semigroup is naturally endowed with an involution via the formula $(f,A,g)^*:=(g,A,f)$, and that it is an inverse semigroup with zero \cite[Theorem 14.16]{Exel1}. In fact, it is easy to see that $\SAB$ is generated (as semigroup) by the elements
$$\{(1, \LAB^{h_i}, h_i), (h_i, \LAB^{h_i}, 1) \mid 1\leq i\leq N\}$$
$$\cup \{ (g_{i,j,m}, \LAB^{h_j}, 1), (1, \LAB^{h_j}, g_{i,j,m})\mid (i,j)\in \OmA, m\in \Z\}.$$\vspace{.1truecm}

Also, the semilattice $E(\SAB)$ of projections of $\SAB$ is easily described: if we define $p_f:=(f, \LAB^f, f)$ and $q_{h_i}:= (1, \LAB^{h_i}, 1)$, then
$$E(\SAB)=\{ p_f\mid f\in \LAB\}\cup \{ q_i\mid 1\leq i\leq N\}.$$
Moreover, the following relations hold (see \cite[Proposition 19.4]{Exel1}):
\begin{enumerate}
\item[(i)] $p_fp_g=p_{\text{lcm}(f,g)}$ if $f\Cap g$.
\item[(ii)] $p_f\leq p_g$ if and only if $g\vert f$.
\item[(iii)] $p_f\perp p_g$ if $f\perp g$.
\item[(iv)] $p_f\leq q_{h_i}$ if $f\in \LAB^{h_i}$.
\item [(v)] $p_f\perp q_{h_i}$ if $f\not\in \LAB^{h_i}$.
\item[(vi)] $q_{h_i}\perp q_{h_j}$ if $i\ne j$.
\end{enumerate}
}
\end{noname}

\begin{remark}\label{Rem:Cond(E)}
{\rm  By \cite[Proposition 14.20]{Exel1}, if every element in $\LAB$ is epic then $\SAB$ is a $E^*$-unitary inverse semigroup, whence $\mathcal{G}_{\LAB}$ is Hausdorff by \cite[Proposition 6.2]{Exel1}.
}
\end{remark}

\section{Tight representations of $\LAB$}

We start this section recalling the definition of a tight representation of a semigroupoid $\Lambda$ in an inverse semigroup $\mathcal{S}$ with zero.

\begin{definition}[{\cite[Definition 15.1]{Exel1}}]\label{Def:SemigroupoidRep}
{\rm A representation of the semigroupoid $\Lambda$ in an inverse semigroup $\mathcal{S}$ with zero is a map $\pi:\Lambda \rightarrow \mathcal{S}$ such that, for every $f,g\in \Lambda$:
\begin{enumerate}
\item[(i)] $\pi_f\pi_g=\pi_{fg}$ if $(f,g)\in \Lambda^{(2)}$, and $\pi_f\pi_g=0$  if $f\perp g$. Moreover, if $q_f^{\pi}=\pi_f^*\pi_f$ and $p_g^{\pi}=\pi_g\pi_g^*$ are the associated projections, then
\item[(ii)] $p_f^{\pi}p_g^{\pi}=0$ if $f\perp g$.
\item[(iii)] $q_f^{\pi}p_g^{\pi}=p_g^{\pi}$ if $(f,g)\in \Lambda^{(2)}$.
\item[(iv)] $q_f^{\pi}p_g^{\pi}=0$ if $(f,g)\not\in \Lambda^{(2)}$.
\end{enumerate}
}
\end{definition}

If $\mathcal{S}$ is an inverse semigroup of partial isometries on a Hilbert space $\mathcal{H}$, we may consider the representation as a map $\pi:\Lambda \rightarrow \mathbb{B}(\mathcal{H})$ in which the projections $q_f^{\pi}=\pi_f^*\pi_f$ and $p_g^{\pi}=\pi_g\pi_g^*$ commute and the conditions (i-iv) are fulfilled.\vspace{.2truecm}

Given finite subsets $F,G\subset \Lambda$, we define $\Lambda^{F,G}:=\bigcap_{f\in F}\Lambda^f \cap \bigcap_{g\in G}\Lambda \setminus \Lambda^g$.

\begin{definition}[{\cite[Definition 15.1]{Exel1}}]\label{Def:TightRep}
{\rm A representation of the semigroupoid $\Lambda$ in a Hilbert space $\mathcal{H}$ is said to be tight if for every finite subsets $F,G\subset \Lambda$ and for every finite covering $H$ of $\Lambda^{F,G}$ we have that $\bigvee_{h\in H}p_h=q_{F,G}$, where $q_{F,G}:=\prod_{f\in F}q_f\prod_{g\in G}(1-q_g)$. In particular, if $H$ is a partition, then $\bigvee_{h\in H}p_h=\sum_{h\in H}p_h$.
}
\end{definition}

\begin{noname}\label{SpecialCase}
{\rm In the case of the semigroupoid $\LAB$, the sets $\LAB^{F,G}$ have a very precise description. Given $f\in \LAB$, denote $t(f)=j$ if either $f=h_j^t$ or $f=\widehat{f}g_{i,j,m}$. Then, note that $\bigcap_{f\in F}\LAB^f\ne \emptyset$ if and only if $\{t(f)\mid f\in F\}$ is a singleton, and in this case $\bigcap_{f\in F}\LAB^f=\LAB^{h_{t(f)}}$. On the other hand, $\bigcap_{g\in G}\LAB \setminus \LAB^g=\LAB\setminus \bigcup_{g\in G}\LAB^g$, so that $\LAB^{F,G}\ne \emptyset$ if and only if $\{t(f)\mid f\in F\}$ is a singleton and $t(f)\not\in \{ t(g)\mid g\in G\}$, and in this case $\LAB^{F,G}=\LAB^{h_{t(f)}}$. Hence, $q_{F,G}\ne 0$ if and only if $\{t(f)\mid f\in F\}$ is a singleton and $t(f)\not\in \{ t(g)\mid g\in G\}$, and in this case $q_{F,G}=q_{f}$.
}
\end{noname}

\begin{lemma}\label{Lem:TightRep=Katsura1}
Given any tight representation $\pi$ of $\SAB$ on a Hilbert space $\mathcal{H}$, we have that:
\begin{enumerate}
\item $q_{h_i}^{\pi}=p_{h_i}^{\pi}=q_{g_{j,i,n}}^{\pi}$ for all $1\leq i,j\leq N$, all $i\in \OmA(j)$ and all $n\in \Z$.
\item $q_{g_{t,i,m}}^{\pi}=\sum\limits_{j\in \OmA(i)} \sum\limits_{n=1}^{A_{i,j}} p_{g_{i,j,n}}^{\pi}$ for all $1\leq i,t\leq N$, all $i\in \OmA(t)$ and all $m\in \Z$.
\item $q_{h_i}^{\pi}\perp q_{h_j}^{\pi}$ if $i\ne j$.
\item $\pi_{h_i}\pi_{g_{i,j,n}}=\pi_{g_{i,j,n+B_{i,j}}}$ and $\pi_{g_{i,j,n}}\pi_{h_j}=\pi_{g_{i,j,n+A_{i,j}}}$ for all $1\leq i\leq N$,all $j\in \OmA(i)$ and all $n\in \Z$.
\end{enumerate}
\end{lemma}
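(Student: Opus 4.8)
The plan is to derive all four items from the representation axioms (i)--(iv) of Definition \ref{Def:SemigroupoidRep}, the tightness condition of Definition \ref{Def:TightRep}, and the explicit computation of the sets $\LAB^{F,G}$ and the projections $q_{F,G}$ recorded in \ref{SpecialCase}. I would carry out the items in the order (4), (1), (2), (3), since (2) and (3) both rest on (1).

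I would begin with item (4), which is immediate. Since $(h_i,g_{i,j,n})$ and $(g_{i,j,n},h_j)$ are admissible pairs of $\LAB$ by Definition \ref{Def:semigroupoidAB}(1), axiom (i) of Definition \ref{Def:SemigroupoidRep} gives $\pi_{h_i}\pi_{g_{i,j,n}}=\pi_{h_ig_{i,j,n}}$ and $\pi_{g_{i,j,n}}\pi_{h_j}=\pi_{g_{i,j,n}h_j}$; the semigroupoid relations $h_ig_{i,j,n}=g_{i,j,n+B_{i,j}}$ and $g_{i,j,n}h_j=g_{i,j,n+A_{i,j}}$ of Definition \ref{Def:semigroupoidAB}(2) then conclude the argument.

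For item (1), the central observation (from Lemma \ref{Lem:LAB-CategoricalNoSprings}) is that $\LAB^{g_{j,i,n}}=\LAB^{h_i}$ whenever $t(g_{j,i,n})=i$, and that the singleton $H=\{h_i\}$ is a partition of $\LAB^{h_i}$: by the analysis in Proposition \ref{Prop:existsLCMinLAB}(2) every element of $\LAB^{h_i}$ intersects $h_i$, as already used in Proposition \ref{Prop:LAB-ExistsFinitePartition}(1). Taking $F=\{h_i\}$, $G=\emptyset$, the description in \ref{SpecialCase} gives $q_{F,G}=q_{h_i}^{\pi}$, and tightness against the partition $\{h_i\}$ yields $p_{h_i}^{\pi}=q_{h_i}^{\pi}$. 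Repeating the same computation with $F=\{g_{j,i,n}\}$ (so that $\LAB^{F,G}=\LAB^{g_{j,i,n}}=\LAB^{h_i}$ and $q_{F,G}=q_{g_{j,i,n}}^{\pi}$) against the same partition $\{h_i\}$ gives $p_{h_i}^{\pi}=q_{g_{j,i,n}}^{\pi}$; chaining the two equalities proves (1). Item (2) then uses $F=\{h_i\}$, $G=\emptyset$ once more, but compared against the finite partition $H=\{g_{i,j,n}\mid j\in\OmA(i),\ 1\le n\le A_{i,j}\}$ of $\LAB^{h_i}$, which is precisely the first expansion level of Proposition \ref{Prop:LAB-ExistsFinitePartition}; tightness for a partition gives $\sum_{h\in H}p_h^{\pi}=q_{F,G}=q_{h_i}^{\pi}$, and identifying $q_{h_i}^{\pi}=q_{g_{t,i,m}}^{\pi}$ via item (1) produces the stated Cuntz-type sum. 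Finally, item (3) follows from (1): for $i\ne j$ the only admissible $h$-pair being $(h_i,h_i)$, we have $h_i\perp h_j$, so axiom (ii) gives $p_{h_i}^{\pi}p_{h_j}^{\pi}=0$, and substituting $q_{h_i}^{\pi}=p_{h_i}^{\pi}$ yields $q_{h_i}^{\pi}\perp q_{h_j}^{\pi}$.

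Most of the genuine work has already been done in Propositions \ref{Prop:existsLCMinLAB} and \ref{Prop:LAB-ExistsFinitePartition}; the only points demanding care are verifying that $\{h_i\}$ and $\{g_{i,j,n}\}$ are honest partitions of the correct $\LAB^{F,G}$ and that \ref{SpecialCase} evaluates $q_{F,G}$ as $q_{h_i}^{\pi}$, so that tightness may legitimately be invoked. I expect the main conceptual obstacle to be item (1): one must realize that the domain projection $q_{g_{j,i,n}}^{\pi}$ should be accessed through its domain set $\LAB^{g_{j,i,n}}=\LAB^{h_i}$, whereupon tightness forces it to coincide with $p_{h_i}^{\pi}$. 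Once this degeneracy---all these domain projections depending only on the target vertex $i$---is established, items (2) and (3) are short.
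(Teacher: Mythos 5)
Your proposal is correct and follows essentially the same route as the paper: the same three tightness computations (with $F=\{h_i\}$ or $F=\{g_{j,i,n}\}$, $G=\emptyset$, tested against the partitions $\{h_i\}$ and $\{g_{i,j,n}\mid j\in\OmA(i),\ 1\le n\le A_{i,j}\}$) yield items (1) and (2), while (3) and (4) follow from the representation axioms and the defining relations of $\LAB$, which the paper dispatches with the phrase ``by definition of $\pi$ and of $\LAB$'' and you merely spell out in more detail.
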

\begin{proof}
Given a tight representation $\pi$ of $\SAB$ on a Hilbert space $\mathcal{H}$, let us compute three particular cases of the identity $\bigvee_{h\in H}p_h=q_{F,G}$ above:
\begin{enumerate}
\item If $F=\{ h_i\}$ and $G=\emptyset$, then the partition $H=\{ h_i\}$ give us $q_{h_i}^{\pi}=p_{h_i}^{\pi}$.
\item If $F=\{g_{j,i,n}\}$ for some $(j,i)\in \OmA$ and $1\leq n\leq A_{j,i}$, and $G=\emptyset$, then the partition $H=\{h_i\}$ give us $q_{g_{j,i,n}}^{\pi}=p_{h_i}^{\pi}$
\item If $F=\{ h_i\}$ and $G=\emptyset$, then the partition $H=\{ g_{i,j,n}\mid j\in \OmA(i), 1\leq n\leq A_{i,j}\}$ give us $q_{h_i}^{\pi}=\sum\limits_{j\in \OmA(i)} \sum\limits_{n=1}^{A_{i,j}} p_{g_{i,j,n}}^{\pi}$.
\end{enumerate}
Hence (1) and (2) hold, while (3) and (4) hold by definition of $\pi$ and of $\LAB$.
\end{proof}

In the reverse sense, we have the following result.

\begin{lemma}\label{Lem:TightRep=Katsura2}
The map $\tau: \LAB\rightarrow \OAB$ defined by:
\begin{enumerate}
\item[(i)] $\tau_{h_i}=u_i$ for all $1\leq i\leq N$.
\item[(ii)] $\tau_{g_{i,j,n}}=s_{i,j,n}$ for all $1\leq i\leq N$, all $j\in \OmA(i)$ and all $n\in \Z$.
\item[(iii)] If $g\in \LAB$ is written in standard form as $g=g_1\cdots g_n$, then $\tau_g=\tau_{g_1}\cdots \tau_{g_n}$.
\end{enumerate}
is a tight representation of $\LAB$ on a separable Hilbert space that respects least common multiples.
\end{lemma}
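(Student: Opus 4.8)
The plan is to check, in turn, that $\tau$ is a well-defined multiplicative map, that it satisfies the four conditions of Definition \ref{Def:SemigroupoidRep}, that it is tight in the sense of Definition \ref{Def:TightRep}, and finally that it respects least common multiples. The guiding observation is that the defining relations of $\LAB$ in Definition \ref{Def:semigroupoidAB}(2) are mirrored verbatim by relation (i) of Definition \ref{Def:KatAlgAlgebra}, since $s_{i,j,n}u_j=s_{i,j,n+A_{i,j}}$ and $u_is_{i,j,n}=s_{i,j,n+B_{i,j}}$ hold in $\OAB$. Because every element of $\LAB$ has a \emph{unique} standard form (Lemma \ref{Lem:UniqueNormalFormLAB}), prescriptions (i)--(iii) determine $\tau$ unambiguously; and since the passage to standard form in Lemmas \ref{Lem:Tecnic1LAB}--\ref{Lem:NormalFormLAB} uses only those relations, the identical manipulations are legal inside $\OAB$. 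Hence $\tau_{fg}=\tau_f\tau_g$ for every $(f,g)\in\LAB^{(2)}$, which is the multiplicative content of condition (i). Each $\tau_f$ turns out (from the computation below) to be a partial isometry, so $\tau$ lands in the inverse semigroup $\mathcal{S}^{A,B}$ of Lemma \ref{Lem:NormalFormOAB}(3); composing with a faithful representation of the separable $C^*$-algebra $\OAB$ realizes $\tau$ on a separable Hilbert space.

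Next I would compute the two families of projections attached to $\tau$. Writing $s(f)$ and $t(f)$ for the source and target vertices of $f$, an induction on the length of the standard form --- using $s_{i,j,n}^*s_{i,j,n}=q_j$ and the fact, from relation (iii), that each range projection $p_{i,j,n}$ lies under $q_i$ --- gives
$$q_f^\tau=\tau_f^*\tau_f=q_{t(f)},\qquad p_f^\tau=\tau_f\tau_f^*\leq q_{s(f)}.$$
Since the $\{q_i\}$ are mutually orthogonal, conditions (iii) and (iv) of Definition \ref{Def:SemigroupoidRep} follow at once: if $(f,g)\in\LAB^{(2)}$ then $t(f)=s(g)$, so $q_f^\tau p_g^\tau=q_{s(g)}p_g^\tau=p_g^\tau$; and if $(f,g)\notin\LAB^{(2)}$ then $t(f)\neq s(g)$ gives $q_{t(f)}q_{s(g)}=0$, whence $q_f^\tau p_g^\tau=0$. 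The same orthogonality yields the vanishing products $\tau_f\tau_g=0$ whenever $t(f)\neq s(g)$ required in condition (i).

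The substantive point --- and the step I expect to be the main obstacle --- is condition (ii), $p_f^\tau p_g^\tau=0$ for $f\perp g$, together with the lcm-respecting property $p_f^\tau p_g^\tau=p_{\text{lcm}(f,g)}^\tau$ for $f\Cap g$. I would first record two facts in $\OAB$: the range projection $p_{i,j,n}$ depends only on $n$ modulo $A_{i,j}$ (from $s_{i,j,n}u_j=s_{i,j,n+A_{i,j}}$ and $u_ju_j^*=q_j$), and the projections $\{p_{i,j,n}\mid j\in\OmA(i),\ 1\leq n\leq A_{i,j}\}$ are mutually orthogonal by relation (iii). Then I would induct on length, comparing the leading generators of the standard forms of $f$ and $g$: if the sources differ, orthogonality of the $q_i$ finishes it; if the sources agree but the leading generators are disjoint, the two leading range projections are orthogonal, so $p_f^\tau p_g^\tau=0$; and if the leading generators coincide, writing $\tau_f=s_{i,j,n}\tau_{f'}$, $\tau_g=s_{i,j,n}\tau_{g'}$ gives $p_f^\tau p_g^\tau=s_{i,j,n}(p_{f'}^\tau p_{g'}^\tau)s_{i,j,n}^*$, reducing to the shorter tails. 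The combinatorial engine is Proposition \ref{Prop:existsLCMinLAB}: it dictates exactly when sources and leading generators agree, shows that $f\perp g$ forces $f'\perp g'$, and in the intersecting case identifies $\text{lcm}(f,g)$ as the prolongation along the common prefix. The recursion therefore collapses the intersecting case to divisibility, say $f\mid g$ with $g=f\,w$; then $p_g^\tau=\tau_f\,p_w^\tau\,\tau_f^*\leq\tau_f q_{t(f)}\tau_f^*=p_f^\tau$, so $p_f^\tau p_g^\tau=p_g^\tau=p_{\text{lcm}(f,g)}^\tau$. This establishes (ii) and the lcm property simultaneously.

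It remains to check tightness. By \ref{SpecialCase}, a nonempty $\LAB^{F,G}$ equals $\LAB^{h_i}$ for the common target $i$ of the elements of $F$, with $i\notin\{t(g)\mid g\in G\}$, and then $q_{F,G}^\tau=q_i$; thus it suffices to prove $\bigvee_{h\in H}p_h^\tau=q_i$ for every finite cover $H$ of $\LAB^{h_i}$. The inequality $\leq$ is clear, since every $h\in\LAB^{h_i}$ has $s(h)=i$ and hence $p_h^\tau\leq q_i$. For the reverse inequality, the basic partition $\{g_{i,j,n}\mid j\in\OmA(i),\ 1\leq n\leq A_{i,j}\}$ of $\LAB^{h_i}$ satisfies $\sum p_{g_{i,j,n}}^\tau=\sum p_{i,j,n}=q_i$ by relation (iii). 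A general finite cover can be refined, via the rooted-tree partitions furnished by Proposition \ref{Prop:LAB-ExistsFinitePartition}, to a finite partition of $\LAB^{h_i}$; because $\tau$ respects least common multiples, the join $\bigvee_{h\in H}p_h^\tau$ equals the sum over this refining partition, and iterating relation (iii) node by node along the tree telescopes that sum back to $q_i$. Hence $\bigvee_{h\in H}p_h^\tau=q_i=q_{F,G}^\tau$, so $\tau$ is tight, which completes the verification.
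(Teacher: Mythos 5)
Your proposal is correct, and its skeleton coincides with the paper's proof: well-definedness via the uniqueness of the standard form (Lemma \ref{Lem:UniqueNormalFormLAB}) and the fact that relation (i) of Definition \ref{Def:KatAlgAlgebra} mirrors Definition \ref{Def:semigroupoidAB}(2); the identification $q_f^\tau=q_{t(f)}$, $p_f^\tau=\tau_f\tau_f^*$; the reduction of tightness to the sets $\LAB^{h_i}$ via \ref{SpecialCase}; and the rooted-tree partitions of Proposition \ref{Prop:LAB-ExistsFinitePartition} combined with iteration of relation (iii) of Definition \ref{Def:KatAlgAlgebra}. The genuine difference lies in how the two hardest claims are ordered and discharged. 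The paper verifies conditions (i)--(iv) of Definition \ref{Def:SemigroupoidRep} only by assertion, proves tightness, and then obtains the lcm-respecting property for free by citing Proposition \ref{Prop:LAB-ExistsFinitePartition} together with \cite[Proposition 15.5]{Exel1}, which guarantees that once finite partitions exist, every tight representation automatically respects least common multiples. You reverse this: you prove condition (ii) and the identity $p_f^\tau p_g^\tau=p_{\text{lcm}(f,g)}^\tau$ directly, by induction on standard forms with Proposition \ref{Prop:existsLCMinLAB} as the combinatorial engine, and then feed that identity into the tightness argument to handle arbitrary finite covers by refinement to partitions. Your route buys self-containedness (no appeal to the general normality theorem of \cite{Exel1}) and records facts the paper leaves implicit, notably that $p_{i,j,n}$ depends only on $n$ modulo $A_{i,j}$; the paper's route buys brevity, since the machinery of \cite{Exel1} was built precisely so that tightness alone suffices. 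One point to tighten in your cover argument: if a partition element $p$ and a cover element $h$ intersect and have \emph{equal} length, Proposition \ref{Prop:existsLCMinLAB} may give $p\,\vert\, h$ rather than $h\,\vert\, p$, so the lcm identity does not directly yield $p_p^\tau\leq p_h^\tau$; this is harmless either because in $\OAB$ one then has $p_h^\tau=p_p^\tau$ (the two paths differ by a power of the partial unitary $u_j$, and $u_j^t q_j (u_j^t)^*=q_j$), or more simply by choosing the refining partition strictly deeper than every element of the cover.
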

\begin{proof}
Consider the above defined map $\tau: \LAB\rightarrow \OAB$. Since the standard form is unique by Lemma \ref{Lem:UniqueNormalFormLAB}, and $s_{i,j,n}u_j=s_{i,j, n+A_{i,j}}$ and $u_is_{i,j,n}=s_{i,j, n+B_{i,j}}$ for all $(i,j)\in \OmA$ and $n\in \Z$, $\tau$ is a well-defined map. By definition, $\tau_f$ is a partial isometry for each $f\in \LAB$. The projections $p_f^{\tau}$ and $q_g^{\tau}$  are $s_fs_f^*$ and $q_{t(f)}$ respectively. Hence, they commute (see \ref{CuntzSemigroup}). Moreover, properties (i-iv) in Definition \ref{Def:SemigroupoidRep} are fulfilled, so that $\tau$ is a representation of $\LAB$ on $\OAB$.

In order to see that $\tau$ is a tight representation, recall that by \ref{SpecialCase}, $\LAB^{F,G}\ne \emptyset$ if and only if $\{t(f)\mid f\in F\}$ is a singleton and $t(f)\not\in \{ t(g)\mid g\in G\}$, and in this case $\LAB^{F,G}=\LAB^{h_{t(f)}}$. Hence, it is enough to show that, whenever $H$ is a finite partition of $\LAB^{F,G}$, then $q_{f}^{\tau}=\sum\limits_{h\in H}p_h^{\tau}$. But notice that by Definition \ref{Def:KatAlgAlgebra} we have $q_{h_i}^{\tau}=q_{g_{t,i,m}}^{\tau}=\sum\limits_{j\in \OmA(i)}\sum\limits_{n=1}^{A_{i,j}}p_{g_{i,j,,n}}^{\tau}$. Since any finite partition is obtained by expansion of the partitions $\{h_i\}$ and $\{g_{i,j,n}\mid j\in \OmA(i), 1\leq n\leq A_{i,j}\}$ (this is the idea of expansion of a rooted tree used in the proof of Proposition \ref{Prop:LAB-ExistsFinitePartition}), and these expansion faithfully transfer to the projections $p_f^{\tau}$ and $q_g^{\tau}$, we obtain the desired result. Also, because of Proposition \ref{Prop:LAB-ExistsFinitePartition} and \cite[Proposition 15.5]{Exel1}, $\tau$ respects least common multiples.
\end{proof}

Recall that a representation $\pi$ of a semigroupoid $\Lambda$ on a Hilbert space is said to be normal if it is tight and respects least common multiples. Then:
\begin{enumerate}
\item We will denote $\Oo_{\Lambda}$ the $C^*$-algebra generated by a universal tight representation $\pi$ of $\Lambda$ \cite[Definition 4.6]{Exel2}.
\item We will denote $\Oo_{\Lambda}^{\text{lcm}}$ the $C^*$-algebra generated by a universal normal representation $\pi^u$ of $\Lambda$ \cite[Definition 18.2]{Exel1}.
\end{enumerate}

As remarked in \cite[Section 18]{Exel1}, if $\Lambda$ satisfies the statement of Proposition \ref{Prop:LAB-ExistsFinitePartition}, then by \cite[Proposition 15.5]{Exel1} any tight representation is normal, and so $\Oo_{\Lambda}\cong \Oo_{\Lambda}^{\text{lcm}}$. Now, the above discussion allows us to prove the following result.

\begin{theorem}\label{Thm:OABisoOLAB}
The $C^*$-algebras $\OAB$ and $\Oo_{\LAB}$ are naturally isomorphic.
\end{theorem}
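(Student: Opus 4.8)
The plan is to exhibit two mutually inverse $*$-homomorphisms between $\OAB$ and $\Oo_{\LAB}$, each obtained from a universal property, and then to check that their composites act as the identity on generators.

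First I would construct the forward map. Since $\tau$ is a tight representation of $\LAB$ in $\OAB$ by Lemma~\ref{Lem:TightRep=Katsura2}, the universal property of $\Oo_{\LAB}$ (as the $C^*$-algebra generated by the universal tight representation $\pi$, \cite[Definition 4.6]{Exel2}) yields a $*$-homomorphism $\Phi\colon \Oo_{\LAB}\to\OAB$ with $\Phi(\pi_f)=\tau_f$ for every $f\in\LAB$. In particular $\Phi(\pi_{h_i})=u_i$ and $\Phi(\pi_{g_{i,j,n}})=s_{i,j,n}$; since by Lemma~\ref{Lem:NormalFormOAB}(1) the elements $u_i$ and $s_{i,j,n}$ generate $\OAB$, the map $\Phi$ is surjective.

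Next I would produce the inverse. Applying Lemma~\ref{Lem:TightRep=Katsura1} to the universal tight representation $\pi$, the elements $q_{h_i}^{\pi}$, $\pi_{h_i}$, and $\pi_{g_{i,j,n}}$ of $\Oo_{\LAB}$ satisfy exactly the Katsura relations of Definition~\ref{Def:KatAlgAlgebra}: mutual orthogonality of the projections $q_{h_i}^{\pi}$ is part~(3); the identities $\pi_{h_i}\pi_{h_i}^*=\pi_{h_i}^*\pi_{h_i}=q_{h_i}^{\pi}$ that make $\pi_{h_i}$ a partial unitary are part~(1); relation~(i) is part~(4); relation~(ii), namely $\pi_{g_{i,j,n}}^*\pi_{g_{i,j,n}}=q_{h_j}^{\pi}$, combines the definition of $q_{g_{i,j,n}}^{\pi}$ with part~(1); and relation~(iii) is part~(2) together with part~(1). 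Hence the universal property of $\OAB$ furnishes a $*$-homomorphism $\Psi\colon\OAB\to\Oo_{\LAB}$ determined by $\Psi(q_i)=q_{h_i}^{\pi}$, $\Psi(u_i)=\pi_{h_i}$, and $\Psi(s_{i,j,n})=\pi_{g_{i,j,n}}$. I would then verify that $\Phi$ and $\Psi$ are mutual inverses by evaluating on generators: on $\OAB$ one has $\Phi(\Psi(u_i))=\Phi(\pi_{h_i})=u_i$ and $\Phi(\Psi(s_{i,j,n}))=\Phi(\pi_{g_{i,j,n}})=s_{i,j,n}$, so $\Phi\circ\Psi=\mathrm{id}_{\OAB}$; conversely, $\Oo_{\LAB}$ is generated by the $\pi_f$, and since every $\pi_f$ is a product of the generators $\pi_{h_i}$ and $\pi_{g_{i,j,n}}$ (Lemma~\ref{Lem:InitialFormLAB}), it suffices to check $\Psi(\Phi(\pi_{h_i}))=\Psi(u_i)=\pi_{h_i}$ and $\Psi(\Phi(\pi_{g_{i,j,n}}))=\Psi(s_{i,j,n})=\pi_{g_{i,j,n}}$, giving $\Psi\circ\Phi=\mathrm{id}_{\Oo_{\LAB}}$.

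The step I expect to be the main obstacle is the careful verification that the images $q_{h_i}^{\pi}$, $\pi_{h_i}$, $\pi_{g_{i,j,n}}$ obey \emph{all} the defining relations of $\OAB$, so that $\Psi$ is well defined by universality. The delicate points are relation~(iii), which requires the summation identity of Lemma~\ref{Lem:TightRep=Katsura1}(2) (itself a consequence of tightness applied to the finite partition $\{g_{i,j,n}\mid j\in\OmA(i),\,1\leq n\leq A_{i,j}\}$ of $\LAB^{h_i}$), and the translation between tight representations of the semigroupoid $\LAB$ and of the associated inverse semigroup $\SAB$ under which Lemma~\ref{Lem:TightRep=Katsura1} is phrased. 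Once these relations are secured, the remaining bookkeeping on generators is routine, and $\Phi$ is the desired natural isomorphism.
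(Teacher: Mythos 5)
Your proposal is correct and follows essentially the same route as the paper's proof: both directions are obtained from Lemma~\ref{Lem:TightRep=Katsura1} together with the universal property of $\OAB$, and from Lemma~\ref{Lem:TightRep=Katsura2} together with the universal property of $\Oo_{\LAB}$, and the two homomorphisms are then checked to be mutually inverse on generators. Your write-up merely supplies more detail than the paper does — in particular the explicit matching of the Katsura relations of Definition~\ref{Def:KatAlgAlgebra} with the four parts of Lemma~\ref{Lem:TightRep=Katsura1} — which the paper leaves implicit.
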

\begin{proof}
Because of Lemma \ref{Lem:TightRep=Katsura1} and the Universal Property of $\OAB$, there is a (unique) $\ast$-homomorphism $\varphi: \OAB \rightarrow \Oo_{\LAB}$ defined by $\varphi (u_i)=\pi_{h_i}$ and $\varphi (s_{i,j,n})=\pi_{g_{i,j,n}}$. On the other hand, by Lemma \ref{Lem:TightRep=Katsura2}  and the Universal Property of $\Oo_{\LAB}$, there exists a (unique) $\ast$-homomorphism $\psi: \Oo_{\LAB}\rightarrow \OAB$ defined by $\psi (\pi_{h_i})=u_i$ and $\psi (g_{i,j,n})=s_{i,j,n}$. As both maps are mutually inverse, we conclude that  $\OAB\cong \Oo_{\LAB}$.
\end{proof}

As a consequence we obtain the following result

\begin{corollary}\label{Cor:OABisoGroupoid}
If $\mathcal{G}_{\LAB}$ denotes the groupoid of germs associated to the tight part of the spectrum of $E(\SAB)$, then $\OAB\cong C^*(\mathcal{G}_{\LAB})$.
\end{corollary}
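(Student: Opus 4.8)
The plan is to deduce this directly from Theorem~\ref{Thm:OABisoOLAB} by feeding it into the general machinery of Exel relating tight representations of a semigroupoid, tight representations of its associated inverse semigroup, and germ groupoids. Concretely, I would establish the chain of natural isomorphisms
$$\OAB \;\cong\; \Oo_{\LAB} \;\cong\; C^*_{\mathrm{tight}}(\SAB) \;\cong\; C^*(\mathcal{G}_{\LAB}),$$
where the first isomorphism is Theorem~\ref{Thm:OABisoOLAB}, and the remaining two together constitute the content of this corollary.

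For the middle isomorphism, I would invoke the correspondence, set up in \cite[Section 15]{Exel1}, between tight representations of the semigroupoid $\LAB$ and tight representations of the inverse semigroup $\SAB$: a tight representation of $\LAB$ determines, and is determined by, a tight representation of $\SAB$, the two generating the same $C^*$-algebra since $\SAB$ is generated by the partial isometries attached to the elements of $\LAB$ together with their adjoints (see \ref{SAB}). Passing to the universal objects, this identifies $\Oo_{\LAB}$ with the universal tight $C^*$-algebra $C^*_{\mathrm{tight}}(\SAB)$ of the inverse semigroup. Here the hypotheses verified earlier are exactly what make the theory apply: $\LAB$ has no springs and is categorical (Lemma~\ref{Lem:LAB-CategoricalNoSprings}), satisfies the Standing Hypothesis (Remark~\ref{Rem:LAB-StandingHypothesis}), and admits finite partitions refining any given covering (Proposition~\ref{Prop:LAB-ExistsFinitePartition}), so that every tight representation is automatically normal and $\Oo_{\LAB}\cong\Oo_{\LAB}^{\mathrm{lcm}}$.

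For the last isomorphism, I would appeal to Exel's general theorem that, for any inverse semigroup $S$ with zero, the universal tight $C^*$-algebra $C^*_{\mathrm{tight}}(S)$ is naturally isomorphic to the full $C^*$-algebra of the groupoid of germs of the canonical action of $S$ on the tight part of the spectrum of $E(S)$ (the main result of \cite{Exel2}). Since the groupoid $\mathcal{G}_{\LAB}$ named in the statement is by definition precisely this tight groupoid associated to $\SAB$, applying the theorem with $S=\SAB$ yields $C^*_{\mathrm{tight}}(\SAB)\cong C^*(\mathcal{G}_{\LAB})$, and composing the three isomorphisms gives the result.

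The step I expect to require the most care is the middle one: one must check that \emph{tight} for the semigroupoid representation in the sense of Definition~\ref{Def:TightRep} matches \emph{tight} for the induced representation of $\SAB$ in the inverse-semigroup sense, i.e.\ that the coverings of the sets $\LAB^{F,G}$ appearing in Definition~\ref{Def:TightRep} correspond to the covers of the idempotent semilattice $E(\SAB)$ used to define tightness of inverse-semigroup representations. The explicit description of $\LAB^{F,G}$ in \ref{SpecialCase} and of $E(\SAB)$ in \ref{SAB} should make this matching transparent, reducing it to the bookkeeping already carried out in Lemma~\ref{Lem:TightRep=Katsura1} and Lemma~\ref{Lem:TightRep=Katsura2}. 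When $B$ satisfies Condition (E), Remark~\ref{Rem:Cond(E)} additionally guarantees that $\mathcal{G}_{\LAB}$ is Hausdorff, though this is not needed for the isomorphism itself.
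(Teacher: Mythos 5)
Your proposal is correct and takes essentially the same approach as the paper: the paper checks that $\LAB$ satisfies the hypotheses of \cite[Theorem 18.4]{Exel1} (Propositions \ref{Prop:LAB-Monic}, \ref{Prop:existsLCMinLAB} and Lemma \ref{Lem:LAB-CategoricalNoSprings}, the same facts you invoke via the Standing Hypothesis and Proposition \ref{Prop:LAB-ExistsFinitePartition}) and cites that theorem together with Theorem \ref{Thm:OABisoOLAB} as a single step, whereas you merely unfold that theorem into its two constituent isomorphisms $\Oo_{\LAB}\cong C^*_{\mathrm{tight}}(\SAB)\cong C^*(\mathcal{G}_{\LAB})$, which is precisely how it is proved. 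One minor slip: the identification of $C^*_{\mathrm{tight}}(S)$ with the full $C^*$-algebra of the tight groupoid of germs is a result of \cite{Exel1} (its Section 13 on tight groupoids), not the main result of \cite{Exel2}, the latter being the paper where the semigroupoid algebra $\Oo_{\Lambda}$ is defined.
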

\begin{proof}
By Proposition \ref{Prop:LAB-Monic}, Proposition \ref{Prop:existsLCMinLAB} and Lemma \ref{Lem:LAB-CategoricalNoSprings} $\LAB$ fulfils the requirements of \cite[Theorem 18.4]{Exel1}. Hence, the result holds from \cite[Theorem 18.4]{Exel1} and Theorem \ref{Thm:OABisoOLAB}.
\end{proof}

\begin{remark}\label{Rem:Camins}
{\rm Notice that the universal tight representation $\pi:\LAB\rightarrow \Oo_{\LAB}$ extends to a representation $\pi: \SAB\rightarrow \OAB$ which is a monoid homomorphism (sending zero to zero). In fact, $\pi (\SAB)$ is exactly the inverse semigroup $\mathcal{S}^{A,B}$ of partial isometries of $\OAB$. Moreover, by Lemmas \ref{Lem:NormalFormLAB} and \ref{Lem:UniqueNormalFormLAB}, $\pi$ restricts to a monoid isomorphism between the submonoid
$$\Xi=\{ (f, \Lambda^f, 1)\mid f\in \LAB\}\cup \{ 0\}$$
of $\SAB$ and the submonoid
$$\Gamma=\{s_{i_1,i_2,n_1}\cdots s_{i_{k-1},i_k,n_{k-1}}u_{i_k}^{t_k}\mid 1\leq n_j\leq A_{i_j, i_{j+1}}, t_k\in \Z \}\cup \{u_i^l\mid 1\leq i\leq N, l\in \N\}\cup\{0\}$$
of $\mathcal{S}^{A,B}$.
}
\end{remark}

Finally, we are ready to prove the following result, which will be relevant to apply our techniques for studying the algebra $\OAB$. The first part of it appears in \cite[Proposition 5.6]{Kat1}.  Since the appearance of Brown and Ozawa's book \cite{B-O}, we may give it a much shorter proof.

\begin{theorem}\label{Prop: amenable}
Let $N\in \N\cup \{\infty\}$, and let $A\in M_N(\Z^+)$ and $B\in M_N(\Z)$ be matrices satisfying Condition (0). Then:
\begin{enumerate}
\item $\OAB$ is nuclear.
\item The groupoid $\mathcal{G}_{\LAB}$ is amenable.
\item $C^*_r(\mathcal{G}_{\LAB})=C^*(\mathcal{G}_{\LAB})$.
\end{enumerate}
\end{theorem}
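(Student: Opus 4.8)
The plan is to push everything through the groupoid picture already established and then invoke the amenability machinery of \cite{B-O}. By Corollary \ref{Cor:OABisoGroupoid} there is a natural isomorphism $\OAB\cong C^*(\mathcal{G}_{\LAB})$, and $\mathcal{G}_{\LAB}$ is by construction an \emph{\'etale} groupoid whose unit space is the locally compact Hausdorff tight spectrum of $E(\SAB)$. Moreover, since $N\in\N\cup\{\infty\}$ forces all index sets, the inverse semigroup $\SAB$, and its semilattice $E(\SAB)$ to be countable, the groupoid $\mathcal{G}_{\LAB}$ is second countable. The engine of the argument is the equivalence, for such groupoids, between (topological) amenability of $\mathcal{G}_{\LAB}$ and nuclearity of its reduced $C^*$-algebra, together with the fact that amenability forces the full and reduced algebras to coincide.

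For part (1) I would invoke \ref{KatsuraConditions}(1) (that is, \cite[Proposition 2.9]{Kat1}), which already gives that $\OAB$ is nuclear; transporting this across Corollary \ref{Cor:OABisoGroupoid} shows that $C^*(\mathcal{G}_{\LAB})$ is nuclear. Since the regular representation provides a canonical surjection $C^*(\mathcal{G}_{\LAB})\to C^*_r(\mathcal{G}_{\LAB})$ and nuclearity passes to quotients, the reduced algebra $C^*_r(\mathcal{G}_{\LAB})$ is nuclear as well.

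For part (2) I would then apply the characterization in \cite{B-O} that, for a second countable \'etale groupoid, nuclearity of the reduced groupoid $C^*$-algebra implies that the groupoid is amenable; applied to $\mathcal{G}_{\LAB}$ this yields amenability. Granting (2), part (3) is formal: for an amenable \'etale groupoid the canonical map $C^*(\mathcal{G}_{\LAB})\to C^*_r(\mathcal{G}_{\LAB})$ is an isomorphism, by the Anantharaman-Delaroche theory recorded in \cite{B-O}.

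The hard part will be the interplay with the Hausdorff property. The present hypothesis is only Condition (0), while $\mathcal{G}_{\LAB}$ was seen to be Hausdorff only under Condition (E) (Remark \ref{Rem:Cond(E)} and Lemma \ref{Lem:LAB-Epic}); in general $\mathcal{G}_{\LAB}$ need not be Hausdorff. As the amenability/nuclearity equivalences in \cite{B-O} are framed for Hausdorff \'etale groupoids, the main obstacle is to legitimise their use for the possibly non-Hausdorff $\mathcal{G}_{\LAB}$ --- either by appealing to the versions of these results valid for non-Hausdorff \'etale groupoids, or by first settling the Hausdorff case (Condition (E)) and then reducing the general case to it. Checking that the precise hypotheses (\'etaleness, second countability, and the exact notion of amenability invoked) are genuinely satisfied is where the care is needed; the remaining implications are routine.
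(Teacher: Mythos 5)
Your proposal is correct and, for parts (2) and (3), follows exactly the paper's argument: transport nuclearity through the isomorphism $\OAB\cong C^*(\mathcal{G}_{\LAB})$ of Corollary \ref{Cor:OABisoGroupoid}, pass it to the quotient $C^*_r(\mathcal{G}_{\LAB})$ (nuclearity passes to quotients, \cite[Theorem 9.4.4]{B-O}), deduce amenability of $\mathcal{G}_{\LAB}$ from nuclearity of its reduced $C^*$-algebra (\cite[Theorem 5.6.18]{B-O}), and conclude $C^*(\mathcal{G}_{\LAB})=C^*_r(\mathcal{G}_{\LAB})$ (\cite[Corollary 5.6.17]{B-O}). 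The one genuine difference is part (1): you simply cite Katsura's nuclearity result, recorded in \ref{KatsuraConditions}(1), whereas the paper deliberately \emph{reproves} nuclearity --- it realizes $\OAB$ as the Cuntz--Pimsner algebra $\mathcal{O}_{X_{A,B}}$ of a full $C^*$-correspondence over the nuclear commutative algebra $\mathcal{A}_N\cong C_0(\{1,\dots,N\}\times\mathbb{T})$ (\cite[Subsection 4.3]{Kat1}), obtains nuclearity of the Toeplitz--Cuntz--Pimsner algebra $\mathcal{T}_{X_{A,B}}$ from \cite[Theorem 4.6.25]{B-O}, and passes to the quotient $\mathcal{O}_{X_{A,B}}$. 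The paper's stated motivation, given just before the theorem, is that Brown--Ozawa's book permits a much shorter proof of \cite[Proposition 5.6]{Kat1}; your citation route is logically just as valid and shorter still, at the cost of treating nuclearity as a black box rather than exhibiting a mechanism for it.

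As for the Hausdorff issue you flag: you are right that Remark \ref{Rem:Cond(E)} guarantees that $\mathcal{G}_{\LAB}$ is Hausdorff only under Condition (E), while the theorem assumes only Condition (0), and that the results of \cite[Section 5.6]{B-O} are framed for locally compact Hausdorff \'etale groupoids. But note that the paper's own proof applies those results with no comment whatsoever on this point; it performs neither of the two reductions you sketch. So your proposal is not less complete than the published argument --- the subtlety you identify is an unstated assumption in the paper itself, and closing it rigorously (for instance via the non-Hausdorff groupoid theory of \cite{Exel3}) would go beyond what the paper does. You need not feel obliged to resolve it to match the paper's standard of proof, though it is to your credit that you noticed it.
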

\begin{proof}
$\mbox{ }$
\begin{enumerate}
\item By \cite[Subsection 4.3]{Kat1}, $\OAB$ is the Cuntz-Pimsner algebra $\mathcal{O}_{X_{A,B}}$ of a certain full $C^*$-correspondence $X_{A,B}$ over the commutative $C^*$-algebra $\mathcal{A}_N\cong C_0(\{1, 2, \dots , N\}\times \mathbb{T})$. Since $\mathcal{A}_N$ is nuclear, \cite[Theorem 4.6.25]{B-O} implies that the associated Toeplitz-Cuntz-Pimsner algebra $\mathcal{T}_{X_{A,B}}$ is nuclear. Since $\mathcal{O}_{X_{A,B}}$ is a quotient of $\mathcal{T}_{X_{A,B}}$ and nuclearity passes to quotients \cite[Theorem 9.4.4]{B-O}, we conclude that $\OAB$ is nuclear.
\item By Corollary \ref{Cor:OABisoGroupoid} we have that $\OAB\cong C^*(\mathcal{G}_{\LAB})$. Then, by part (1), $C^*(\mathcal{G}_{\LAB})$ is nuclear. Thus, by \cite[Theorem 9.4.4]{B-O}, so is the reduced groupoid $C^*$-algebra $C_r^*(\mathcal{G}_{\LAB})$, which is its quotient. Hence, $\mathcal{G}_{\LAB}$ is amenable by \cite[Theorem 5.6.18]{B-O}.
\item Since $\mathcal{G}_{\LAB}$ is amenable by part (2), the result holds by \cite[Corollary 5.6.17]{B-O}.
\end{enumerate}
\end{proof}

\section{Computing the space $\mathcal{G}_{\LAB}^{(0)}$}

In this section we will explicitly compute a suitable picture for $\mathcal{G}_{\LAB}^{(0)}$ directly associated to $\OAB$, and we will use it to give an alternative description of $\OAB$ in terms of an action $\alpha$ of $\mathcal{S}^{A,B}$ on the new picture of $\mathcal{G}_{\LAB}^{(0)}$.\vspace{.2truecm}

To this end recall that, according to \cite{Exel1}, $\mathcal{G}_{\LAB}^{(0)}$ is homeomorphic to the space $\widehat{E(\SAB)}_{\text{tight}}$ of tight characters on the semilattice $E(\SAB)$ of projections of $\SAB$.

In order to simplify the notation, along this section we will denote $\mathcal{S}:=\SAB$ and $E:=E(\SAB)$. Following the notation in (\ref{SAB}) and \cite[Notations 19.3]{Exel1}, we denote:
\begin{enumerate}
\item $E_p:=\{ p_f\mid f\in \LAB\}$.
\item $E_q:=\{ q_{h_i}\mid 1\leq i\leq N\}$.
\end{enumerate}

\begin{definition}[{\cite[Definition 19.5]{Exel1} }]
{\rm Given a filter $\xi$ in $E$, we will say that $\xi$ is of:
\begin{enumerate}
\item $p$-type, if $\xi\subseteq E_p$.
\item $q$-type, if $\xi\subseteq E_q$.
\item $pq$-type, if $\xi\cap E_p$ and $\xi\cap E_q$ are nonempty.
\end{enumerate}}
\end{definition}

\begin{definition}[{\cite[Definition 19.7]{Exel1}}]
Given a filter $\xi$ in $E$, we will say that the stem of $\xi$ is $\omega_{\xi}=\{ f\in \LAB\mid p_f\in \xi\}$.
\end{definition}

\begin{definition}[{\cite[Definition 19.10]{Exel1}}]
{\rm A path in $\LAB$ is a subset $\omega$ of $\LAB$ such that:
\begin{enumerate}
\item[(i)] If $f\in \omega$ and $g\in \LAB$ such that $g\vert f$, then $g\in \omega$;
\item[(ii)] For every $f,g\in \omega$ one has $f\Cap g$, and moreover $\text{lcm}(f,g)\in \omega$.
\end{enumerate}}
\end{definition}

In particular \cite[Proposition 19.9]{Exel1}, any stem $\omega_{\xi}$ of a filter $\xi$ is a path, and the correspondence $\xi\mapsto \omega_{\xi}$ defines a bijection between filters of $E$ (not of $q$-type) and paths of $\LAB$ \cite[Proposition 19.11]{Exel1}.

\begin{noname}\label{tight-filter}
{\rm Recall that if $f\in \LAB$, $r(f)$ denotes the (unique) $\LAB^{h_i}\in \mathcal{Q}$ such that $f\in \LAB^{h_i}$. Thus, under the above correspondence, \cite[Proposition 19.12]{Exel1} implies that the tight-filters $\xi$ (i.e. those whose associate character $\phi_{\xi}\in \widehat{E}_{\text{tight}}$) are exactly the $pq$-type filters $\xi$ satisfying:
\begin{enumerate}
\item For all $f\in \omega_{\xi}$ and for all $H\subseteq \LAB^f$ finite cover, there exists $h\in H$ such that $fh\in \omega_{\xi}$.
\item For all finite cover $H$ of $r(\omega_{\xi})$ one has $h\in \omega_{\xi}$ for some $h\in H$.
\end{enumerate}}
\end{noname}

\begin{proposition}\label{Prop:TightFitersLAB}
Let $\xi$ be a tight-filter of $E$, and let $\omega_{\xi}$ be the associated stem. Then, there exists a unique infinite sequence $\{ i_k\}_{k\geq 1}\subset \{1, 2 \dots ,N\}^{\N}$ and unique elements $1\leq n_j\leq A_{i_j,i_{j+1}}$ such that the elements in the stem $\omega_{\xi}$ are exactly of two types:
\begin{enumerate}
\item $h_{i_1}^t$, for every $t\in \N$.
\item $g_{i_1,i_2,n_1}g_{i_2,i_3,n_2}\cdots g_{i_{k-1},i_{k},n_{k-1}}g_{i_k,i_{k+1},m_k}$, for every $k\in \N$ and every $m_k\in \Z$ such that $m_k\equiv n_k (\text{mod } A_{i_k, i_{k+1}})$.
\end{enumerate}
\end{proposition}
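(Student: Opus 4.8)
The plan is to use the characterization of tight-filters recalled in \ref{tight-filter}, namely that $\omega_{\xi}$ is a nonempty path satisfying the extension condition (1) and the covering condition (2) listed there, together with the explicit finite partitions produced in Proposition \ref{Prop:LAB-ExistsFinitePartition}. First I would determine the root index $i_1$: since $\xi$ is of $pq$-type and the projections $q_{h_i}$ are mutually orthogonal (see \ref{SAB}), there is a unique $i_1$ with $q_{h_{i_1}}\in\xi$, so that $r(\omega_{\xi})=\LAB^{h_{i_1}}$ and every $f\in\omega_{\xi}$ has source $i_1$. Applying condition (2) to the finite partition $\{g_{i_1,j,n}\mid j\in\OmA(i_1),\,1\leq n\leq A_{i_1,j}\}$ of $\LAB^{h_{i_1}}$ furnished by Proposition \ref{Prop:LAB-ExistsFinitePartition} produces an element $g_{i_1,i_2,n_1}\in\omega_{\xi}$ with $1\leq n_1\leq A_{i_1,i_2}$, which fixes $i_2$ and $n_1$. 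Proceeding by induction, given the standard-form element $\alpha_k=g_{i_1,i_2,n_1}\cdots g_{i_k,i_{k+1},n_k}\in\omega_{\xi}$, I would feed the partition $\{g_{i_{k+1},j,n}\mid j\in\OmA(i_{k+1}),\,1\leq n\leq A_{i_{k+1},j}\}$ of $\LAB^{\alpha_k}=\LAB^{h_{i_{k+1}}}$ into condition (1); this yields a unique $g_{i_{k+1},i_{k+2},n_{k+1}}$ with $\alpha_{k+1}=\alpha_k\,g_{i_{k+1},i_{k+2},n_{k+1}}\in\omega_{\xi}$, thereby generating the full infinite sequence $\{i_k\}$ and the bounded exponents $n_j$.

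Next I would verify that the two families listed are genuinely contained in $\omega_{\xi}$. Writing $\alpha_k^{(m)}$ for $\alpha_k$ with its last exponent replaced by $m$, the relation $g_{i,j,n}h_j=g_{i,j,n+A_{i,j}}$ of Definition \ref{Def:semigroupoidAB} shows that $\alpha_k^{(m-A_{i_k,i_{k+1}})}$ divides $\alpha_k^{(m)}$; since a path is closed under divisors, from $\alpha_k^{(n_k)}\in\omega_{\xi}$ we descend to $\alpha_k^{(n_k-tA_{i_k,i_{k+1}})}\in\omega_{\xi}$ for all $t\geq 0$. For the multiples one cannot invoke divisor-closure; instead I would apply condition (1) to $\alpha_k^{(m)}$ with the one-element cover $\{h_{i_{k+1}}\}$ of $\LAB^{h_{i_{k+1}}}$, which gives $\alpha_k^{(m)}h_{i_{k+1}}=\alpha_k^{(m+A_{i_k,i_{k+1}})}\in\omega_{\xi}$, and iterate upward. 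Together these account for exactly the congruence class $m_k\equiv n_k\pmod{A_{i_k,i_{k+1}}}$. The elements $h_{i_1}^{t}$ then come for free: the relation $h_ig_{i,j,n}=g_{i,j,n+B_{i,j}}$ gives $h_{i_1}^{t}g_{i_1,i_2,n_1-tB_{i_1,i_2}}=g_{i_1,i_2,n_1}$, so each $h_{i_1}^{t}$ with $t\in\N$ divides $g_{i_1,i_2,n_1}\in\omega_{\xi}$ and hence lies in $\omega_{\xi}$.

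It remains to prove that $\omega_{\xi}$ contains nothing else, and here the uniqueness of standard form (Lemma \ref{Lem:UniqueNormalFormLAB}) and the intersection analysis of Proposition \ref{Prop:existsLCMinLAB} do the work. Any $f\in\omega_{\xi}$ in standard form must intersect every element of the path, in particular the element $\alpha_r$ of its own length $r$; if $f=h_i^{t}$ then $i=i_1$ and $f$ is of type (1), whereas if $f$ has length $r\geq 1$ then case (3) of Proposition \ref{Prop:existsLCMinLAB} forces its vertices to agree with $i_1,\dots,i_{r+1}$, its bounded exponents to agree with $n_1,\dots,n_{r-1}$, and its last exponent to satisfy $p_r\equiv n_r\pmod{A_{i_r,i_{r+1}}}$, which is precisely type (2). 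Uniqueness of the data follows because $i_1$ is the common source, $i_{k+1}$ is the common target of the length-$k$ elements, and $n_k$ is the unique representative in $[1,A_{i_k,i_{k+1}}]$ of the congruence class realized at length $k$.

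The main obstacle I anticipate is the bookkeeping in the last coordinate: one must see clearly that divisor-closure produces only the downward part $m_k\leq n_k$ of each congruence class, so that the upward multiples genuinely require condition (1), and conversely that condition (1) never drives an element outside the prescribed congruence class, which is ensured by applying it only to the canonical covers of Proposition \ref{Prop:LAB-ExistsFinitePartition}. Cleanly separating what is forced by the path axioms from what is forced by tightness, while keeping the standard form under control, is the delicate point; once this is organized the remaining verifications are routine consequences of Definition \ref{Def:semigroupoidAB} and the cited results of \cite{Exel1}.
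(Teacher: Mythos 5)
Your proposal is correct and follows essentially the same route as the paper: both identify the root $i_1$ from the filter/path structure, inductively extract the data $\{i_k\}$, $\{n_k\}$ by applying the two tightness conditions of \ref{tight-filter} to the canonical finite partitions of Proposition \ref{Prop:LAB-ExistsFinitePartition}, and use pairwise intersection of path elements together with the analysis in Proposition \ref{Prop:existsLCMinLAB} for uniqueness and for excluding anything else from the stem. The only cosmetic difference is in how the congruence class $m_k\equiv n_k \ (\text{mod } A_{i_k,i_{k+1}})$ is filled in: the paper invokes the shifted partitions $H_t$ for all $t\in\Z$, whereas you obtain the downward part by divisor-closure of paths and the upward part by iterating condition (1) with the singleton cover $\{h_{i_{k+1}}\}$ --- both are routine consequences of the same machinery.
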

\begin{proof}
First, let $\LAB^{h_{i_1}}=r(\omega_{\xi})$. Then, by Proposition \ref{Prop:LAB-ExistsFinitePartition} and property (2) in (\ref{tight-filter}):
\begin{enumerate}
\item[(i)] Using the partition $H=\{h_i\}$, we have that $h_{i_1}\in \omega_{\xi}$, and moreover, since $\omega_{\xi}$ is a path, it is the only $h_j\in \omega_{\xi}$, as otherwise $p_{h_{i_1}}\perp p_{h_j}$ with both projections in $\omega_{\xi}$ will imply that $0\in \xi$, which is impossible.
\item[(ii)] Given any $t\in \Z$, and using the partition $H_t=\{ g_{{i_1},j,n+tA_{{i_1},j}}\mid j\in \OmA(i_1), 1\leq i\leq A_{{i_1},j}\}$, we have that for each $t\in \Z$ there exists at least a $j_t\in \OmA(i)$ and a $1\leq n_t\leq A_{{i_1},j_t}$ such that $g_{{i_1},j_t,n_t+tA_{{i_1},j_t}}\in \omega_{\xi}$. And since $\omega_{\xi}$ is a path, by the same argument as in (i) there exist exactly one $j_t\in \OmA(i)$ and one $1\leq n_t\leq A_{{i_1},j_t}$ with this property. By the same reason, for $t\ne r\in \Z$, since both $g_{{i_1},j_t,n_t+tA_{{i_1},j_t}}$ and $g_{{i_1},j_r,n_r+rA_{{i_1},j_r}}$ are in $\omega_{\xi}$, we conclude by the argument in the proof of Proposition \ref{Prop:existsLCMinLAB} that $j_t=j_r$ and $n_t=n_r$. Thus, we fix $i_2=j_0$ and $n_1=n_0$.
\end{enumerate}

On the other hand, using these computations, jointly with by Proposition \ref{Prop:LAB-ExistsFinitePartition} and property (1) in (\ref{tight-filter}):
\begin{enumerate}
\item[(i)] Using the partition $H=\{h_i\}$, for all  $t\in \N$ we have that $h_{i_1}^t\in \omega_{\xi}$ (which fits with point (i) in the previous computation).
\item[(ii)] Using the partition $H=\{h_i\}$, for all $t\in \Z$, if $g_{{i_1},j,n+tA_{{i_1},j}}\in \omega_{\xi}$ then $g_{{i_1},j,n+tA_{{i_1},j}}h_j\in \omega_{\xi}$ (which fits with point (ii) in the previous computation).
\item[(iii)] Given any $t\in \Z$, and using the partition $H_t=\{ g_{{i_1},j,n+tA_{{i_1},j}}\mid j\in \OmA(i_1), 1\leq i\leq A_{{i_1},j}\}$, there exists a unique $j\in \OmA({i_1})$ and a unique $1\leq m_t\leq A_{{i_1},j}$ such that $h_{i_1} g_{{i_1},j,m_t+tA_{{i_1},j}}\in \omega_{\xi}$. Since $h_{i_1}g_{{i_1},j,m_t+tA_{{i_1},j}}=g_{{i_1},j,m_t+tA_{{i_1},j}+B_{{i_1},j}}$, point (ii) in the previous computation shows that $m_t=n_t-B_{{i_1},j}$.
\item[(iv)] Fixed the unique  element $g_{i_1,i_2,n_1}\in \omega_{\xi}$ obtained in the above computation, and using the partition $H_t=\{ g_{{i_1},j,n+tA_{{i_1},j}}\mid j\in \OmA(i_1), 1\leq i\leq A_{{i_1},j}\}$, we conclude by the same arguments as above that there exist a unique $i_3\in \{1,2, \dots , N\}$ and a unique $1\leq n_2\leq A_{i_2,i_3}$ such that $g_{i_1,i_2,n_1}g_{i_2,i_3,n_2}\in \omega_{\xi}$. Moreover, again as above, $g_{i_1,i_2,n_1}g_{i_2,j,m}\in \omega_{\xi}$ if and only if $j=i_3$ and $m\equiv n_k (\text{mod } A_{i_2, i_{3}})$.
\end{enumerate}
Recurrence on the last computation give us the desired result.
\end{proof}

\begin{noname}\label{IsUnique}
{\rm In particular, to each tight-filter $\xi$ of $E$ we can associate an infinite path
$$\widehat{\omega_{\xi}}:=g_{i_1,i_2,n_1}g_{i_2,i_3,n_2}\cdots$$
in the alphabet $\{g_{i,j,n}\mid 1\leq i\leq N, j\in \OmA(i), 1\leq n\leq A_{i,j}\}$, such that, for every $k\in \N$, the element $\widehat{\omega_{\xi}}\vert_{k}=g_{i_1,i_2,n_1}g_{i_2,i_3,n_2}\cdots g_{i_k,i_{k+1},n_k}$ belongs to $\omega_{\xi}$; clearly, $\widehat{\omega_{\xi}}$ is unique. We will denote by $X_A$ the set of infinite paths in the above alphabet, which is a totally disconnected locally compact Hausdorff space when endowed with the natural product topology. Also notice that, through the representation $\pi:\mathcal{S}\rightarrow \OAB$, the infinite path $\widehat{\omega_{\xi}}$ goes to the infinite path $\pi_{\widehat{\omega_{\xi}}}=s_{i_1,i_2,n_1}s_{i_2,i_3,n_2}\cdots$ of $X_A$ (and this is clearly a bijection).}
\end{noname}
\vspace{.2truecm}

In the reverse direction we have

\begin{definition}\label{Def:StemInfinitePath}
{\rm Let $\gamma=g_{i_1,i_2,n_1}g_{i_2,i_3,n_2}\cdots$ be an infinite path in the alphabet $\{g_{i,j,n}\mid 1\leq i\leq N, j\in \OmA(i), 1\leq n\leq A_{i,j}\}$. Then define
$$\omega(\gamma):=\{ h_{i_1}^t\mid t\in \N\}\cup \{ \gamma\vert_{k-1}g_{i_k,i_{k+1},m_k}\mid k\in \N, m_k\equiv n_k (\text{mod } A_{i_k, i_{k+1}})\}.$$
}
\end{definition}

Thus, we have

\begin{proposition}\label{Prop:TightPath4InfintePath}
The set $\omega(\gamma)$ is a tight path such that $\widehat{\omega(\gamma)}=\gamma$.
\end{proposition}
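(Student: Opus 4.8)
The plan is to verify the two claims separately: first that $\omega(\gamma)$ is a path (in the sense of satisfying the two defining conditions of a path together with the two tightness conditions from \ref{tight-filter}), and then that its associated infinite path $\widehat{\omega(\gamma)}$ recovers $\gamma$. The key structural observation driving everything is that $\omega(\gamma)$ is defined to be precisely the downward closure (with respect to divisibility) of the finite truncations $\gamma\vert_k$ of $\gamma$, together with the powers $h_{i_1}^t$; this is exactly the form that Proposition \ref{Prop:TightFitersLAB} showed every tight stem must take. So the task is really to check that this explicit set has the required closure and covering properties.

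First I would check that $\omega(\gamma)$ is a path. For condition (i), downward closure under divisibility, I would take $f\in\omega(\gamma)$ and $g\vert f$, and use the description of divisibility implicit in Proposition \ref{Prop:existsLCMinLAB}: a divisor of $\gamma\vert_{k-1}g_{i_k,i_{k+1},m_k}$ is either a power $h_{i_1}^t$ or an initial segment $\gamma\vert_{l-1}g_{i_l,i_{l+1},m_l}$ with $m_l\equiv n_l\pmod{A_{i_l,i_{l+1}}}$, hence again in $\omega(\gamma)$. For condition (ii), I would take two elements of $\omega(\gamma)$ and show they intersect with lcm again in $\omega(\gamma)$; since any two truncations $\gamma\vert_{k-1}g_{i_k,i_{k+1},m_k}$ and $\gamma\vert_{l-1}g_{i_l,i_{l+1},m_l}$ (say $k\le l$) share the common initial segment, Proposition \ref{Prop:existsLCMinLAB} gives their lcm as the longer one (suitably adjusted), which lies in $\omega(\gamma)$ by the congruence bookkeeping; the cases involving $h_{i_1}^t$ are handled by case (2) of that proof.

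Next I would verify the two tightness conditions of \ref{tight-filter}. For condition (2), a finite cover $H$ of $r(\omega(\gamma))=\LAB^{h_{i_1}}$ must, by the partition structure exhibited in Proposition \ref{Prop:LAB-ExistsFinitePartition}, contain an element intersecting $h_{i_1}$ or intersecting the first edge $g_{i_1,i_2,n_1}$; tracing through the finite-partition construction shows some $h\in H$ lies in $\omega(\gamma)$. For condition (1), given $f\in\omega(\gamma)$ and a finite cover $H$ of $\LAB^f$, I would again appeal to the partition picture of Proposition \ref{Prop:LAB-ExistsFinitePartition}: the cover $H$ refines into the standard expansions at the terminal vertex $t(f)$, and exactly one of the resulting branches continues along $\gamma$, so the corresponding $fh\in\omega(\gamma)$. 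Finally, $\widehat{\omega(\gamma)}=\gamma$ is essentially immediate from the definition: by \ref{IsUnique} the infinite path $\widehat{\omega(\gamma)}$ is the unique one whose every truncation lies in $\omega(\gamma)$, and $\gamma$ manifestly has this property (taking $m_k=n_k$), so uniqueness forces equality.

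The main obstacle I anticipate is condition (1) of the tightness requirement: it is not enough that $\omega(\gamma)$ be closed and intersect covers, one must show that for \emph{every} finite cover $H$ of $\LAB^f$ there is genuinely a member $fh$ landing in the stem, and this requires unwinding the recursive rooted-tree construction of Proposition \ref{Prop:LAB-ExistsFinitePartition} to confirm that the branch following $\gamma$ is always represented. The bookkeeping with the congruences $m_k\equiv n_k\pmod{A_{i_k,i_{k+1}}}$, combined with the $h_i$-multiplication relations from Definition \ref{Def:semigroupoidAB} that shift indices by $B_{i,j}$, is where the argument is most delicate; I would organize it by reducing, via Lemma \ref{Lem:LAB-CategoricalNoSprings}, to the case $f=h_{i_1}$ and then arguing that any finite cover of $\LAB^{h_{i_1}}$ must by its partition structure reach down far enough to capture a truncation of $\gamma$.
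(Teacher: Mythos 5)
Your proposal is correct and follows essentially the same route as the paper's own (much terser) proof: verify the two path axioms directly from the definition of $\omega(\gamma)$ together with the lcm structure of Proposition \ref{Prop:existsLCMinLAB}, reduce the two tightness conditions of (\ref{tight-filter}) to the standard partitions/rooted-tree expansions of Proposition \ref{Prop:LAB-ExistsFinitePartition}, and obtain $\widehat{\omega(\gamma)}=\gamma$ from uniqueness. The paper compresses all of this into ``it is enough to check it for finite partitions containing either $h_j$ or $g_{j,k,m}$, and that holds by definition of $\omega(\gamma)$,'' so your write-up simply supplies the bookkeeping the paper leaves implicit.
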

\begin{proof}
By definition it is clear that $f,g\in \omega(\gamma)$ implies $f\Cap g$. Also, it is clear that $f\in \omega(\gamma)$ and $g\in \LAB$ with $g\vert f$ implies $g\in \omega(\gamma)$. Hence, $\omega(\gamma)$ is a path. Moreover, the filter $\xi$ associated to $\omega(\gamma)$ is of $pq$-type.

Now, let us prove that $\xi$ is tight. For this we need to check properties (1-2) in (\ref{tight-filter}). It is enough to check it for finite partitions containing either $h_j$ or $g_{j,k,m}$. And that holds by definition of $\omega(\gamma)$. The final statement is obvious, so we are done.
\end{proof}

\begin{remark}\label{Rem:Nou}
{\rm Notice that $\omega$ defines a bijection between infinite paths in the alphabet $\{g_{i,j,n}\mid 1\leq i\leq N, j\in \OmA(i), 1\leq n\leq A_{i,j}\}$ and tight paths on $\LAB$. Moreover, by Remark \ref{Rem:Camins} and (\ref{IsUnique}), the bijection becomes a bijection between infinite paths in the alphabet $\{s_{i,j,n}\mid (i,j)\in \OmA, 1\leq n\leq A_{i,j}\}$ and tight paths in the monoid $\Gamma$.
}
\end{remark}

As a consequence of Remark \ref{Rem:Nou}, Proposition \ref{Prop:TightPath4InfintePath} and (\ref{IsUnique}), we obtain the following consequence

\begin{corollary}\label{Corol:Bijection}
There exists a bijection $\Phi:\widehat{E}_{\text{tight}}\rightarrow X_A$, defined by the rule $\Phi(\phi_{\xi})=\widehat{\omega_{\xi}}$.
\end{corollary}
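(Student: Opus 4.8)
The plan is to assemble the bijection $\Phi$ from the pieces already established, verifying only that the two natural maps in opposite directions are mutually inverse. The underlying identification $\mathcal{G}_{\LAB}^{(0)}\cong \widehat{E}_{\text{tight}}$ is recalled at the start of Section 5, so it suffices to produce a bijection between $\widehat{E}_{\text{tight}}$ and $X_A$. Recall that a tight character $\phi_\xi\in \widehat{E}_{\text{tight}}$ corresponds to a unique tight-filter $\xi$, and by \cite[Proposition 19.11]{Exel1} the assignment $\xi\mapsto \omega_\xi$ is already a bijection between the relevant filters and their stems (paths). So the real content is to show that passing from a tight path $\omega_\xi$ to the infinite path $\widehat{\omega_\xi}\in X_A$ is a bijection of the set of tight paths onto $X_A$.

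First I would fix the two maps. In the forward direction, Proposition \ref{Prop:TightFitersLAB} together with (\ref{IsUnique}) shows that every tight-filter $\xi$ determines a unique infinite path $\widehat{\omega_\xi}=g_{i_1,i_2,n_1}g_{i_2,i_3,n_2}\cdots\in X_A$ whose every truncation $\widehat{\omega_\xi}\vert_k$ lies in $\omega_\xi$; this defines $\Phi(\phi_\xi)=\widehat{\omega_\xi}$. In the reverse direction, Definition \ref{Def:StemInfinitePath} and Proposition \ref{Prop:TightPath4InfintePath} assign to each $\gamma\in X_A$ a tight path $\omega(\gamma)$, hence (via the filter--stem correspondence) a tight character, and Proposition \ref{Prop:TightPath4InfintePath} already records that $\widehat{\omega(\gamma)}=\gamma$. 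This last identity is precisely $\Phi\circ(\gamma\mapsto\phi_{\omega(\gamma)})=\mathrm{id}_{X_A}$, so $\Phi$ is surjective.

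For injectivity I would invoke Remark \ref{Rem:Nou}, which states that $\gamma\mapsto\omega(\gamma)$ is a bijection between $X_A$ and the set of tight paths on $\LAB$. Composed with the filter--stem bijection $\xi\mapsto\omega_\xi$ of \cite[Proposition 19.11]{Exel1}, this gives a bijection $X_A\to\widehat{E}_{\text{tight}}$ whose inverse is exactly $\Phi$; indeed, starting from a tight-filter $\xi$, its stem $\omega_\xi$ is a tight path, Proposition \ref{Prop:TightFitersLAB} exhibits $\omega_\xi$ in the explicit form of Definition \ref{Def:StemInfinitePath} with $\gamma=\widehat{\omega_\xi}$, and hence $\omega(\widehat{\omega_\xi})=\omega_\xi$, giving $(\gamma\mapsto\phi_{\omega(\gamma)})\circ\Phi=\mathrm{id}$. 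Thus $\Phi$ is a two-sided inverse to an established bijection and is therefore itself a bijection.

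The only genuine point requiring care is the compatibility of the two explicit descriptions: that the path $\omega_\xi$ produced by Proposition \ref{Prop:TightFitersLAB} coincides, on the nose, with $\omega(\widehat{\omega_\xi})$ as defined in Definition \ref{Def:StemInfinitePath}. I expect this to be the main (though mild) obstacle, since it amounts to matching the two congruence conditions $m_k\equiv n_k\ (\mathrm{mod}\ A_{i_k,i_{k+1}})$ and confirming that the $q$-type initial segment $\{h_{i_1}^t\mid t\in\N\}$ agrees in both descriptions. Both follow directly by comparing the enumerations in Proposition \ref{Prop:TightFitersLAB} and Definition \ref{Def:StemInfinitePath}, so the argument reduces to bookkeeping once the two bijections are placed side by side.
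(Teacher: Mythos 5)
Your proposal is correct and follows essentially the same route as the paper, which deduces the corollary directly from Remark \ref{Rem:Nou}, Proposition \ref{Prop:TightPath4InfintePath} and (\ref{IsUnique}); you simply make explicit the verification that the two maps are mutually inverse, including the compatibility $\omega(\widehat{\omega_{\xi}})=\omega_{\xi}$, which the paper leaves implicit in its citation of those results.
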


Now, we will prove that $X_A$ is homeomorphic to $\widehat{E}_{\text{tight}}$.

\begin{noname}\label{Topology}
{\rm Recall that, whenever $\xi\subset E$ is a filter, then the associated character $\phi_{\xi}$ is given by the rule $\phi_{\xi}(x)=1$ if $x\in \xi$, and $\phi_{\xi}(x)=0$ otherwise. Hence, the topology on $\widehat{E}_{\text{tight}}$ is endowed by the inclusion $\widehat{E}_{\text{tight}}\subset \{ 0, 1\}^{E}$. In particular, the basic open neighborhoods of $\phi_{\eta}$ in $\widehat{E}_{\text{tight}}$ are of the form
$$W=\{ \phi_{\xi}\in \widehat{E}_{\text{tight}}\mid p_{t_1}, \dots ,p_{t_k}\in {\xi} \mbox{ and } p_{s_1}, \dots , p_{s_l}\not\in {\xi}\}$$
for some $t_1, \dots ,t_k, s_1, \dots ,s_l\in \mathcal{S}^{A,B}$.

On the other hand, the topology of $X_A$ is defined by the following fundamental system of open neighborhoods: for each $n\in \N$ and each $\gamma\in X_A$, the basic open (in fact clopen) sets are of the form
$$W_n^{\gamma}=\{ \tau \in X_A\mid \gamma\vert_n=\tau\vert_n\}.$$
}
\end{noname}

So, we have

\begin{proposition}\label{Prop:TopologyXA}
The map $\Phi:\widehat{E}_{\text{tight}}\rightarrow X_A$ is an homeomorphism of topological spaces.
\end{proposition}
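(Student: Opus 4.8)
The plan is to show that $\Phi$ is simultaneously continuous and open. Since $\Phi$ is already a bijection by Corollary~\ref{Corol:Bijection}, a continuous open bijection is automatically a homeomorphism, and this avoids any appeal to compactness (which is unavailable here, as both $\widehat{E}_{\text{tight}}$ and $X_A$ are in general only locally compact). By (\ref{Topology}) it suffices to compare the two explicit neighborhood bases: the cylinders $W_n^\gamma$ on the $X_A$ side, and the sets $W=\{\phi_\xi\mid p_{t_1},\dots,p_{t_k}\in\xi,\ p_{s_1},\dots,p_{s_l}\notin\xi\}$ on the $\widehat{E}_{\text{tight}}$ side.

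The key technical step, from which everything follows, is to translate each elementary condition ``$p_f\in\xi$'' into a clopen condition on the path $\gamma=\widehat{\omega_\xi}=\Phi(\phi_\xi)$. By definition of the stem, $p_f\in\xi$ is equivalent to $f\in\omega_\xi$. If $f$ is of $g$-type of length $r$, written in standard form, let $\widehat f$ denote its normalization obtained by reducing the last label modulo $A_{i_r,i_{r+1}}$ into the range $[1,A_{i_r,i_{r+1}}]$. Proposition~\ref{Prop:TightFitersLAB} then gives precisely that $f\in\omega_\xi$ if and only if $\widehat f$ is the length-$r$ initial segment of $\gamma$, i.e. $\gamma\vert_r=\widehat f$; the point is that the congruence ``$m_r\equiv n_r\ (\mathrm{mod}\ A_{i_r,i_{r+1}})$'' appearing there is exactly the statement that the normalized labels agree. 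If instead $f$ is of $h$-type, then $f\in\omega_\xi$ merely records that the source vertex of $\gamma$ equals the appropriate index, which is again a clopen condition read off from $\gamma\vert_1$. Thus each condition $p_f\in\xi$ corresponds to a cylinder (or, for $h$-type, a clopen union of length-one cylinders), and each condition $p_f\notin\xi$ to the complement of such a set.

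To see that $\Phi$ is continuous I would compute the preimage of a basic cylinder. For $W_n^\gamma$, observe that $\phi_\xi\in\Phi^{-1}(W_n^\gamma)$ iff $\widehat{\omega_\xi}\vert_n=\gamma\vert_n$, which, since $\gamma\vert_n$ is a normalized length-$n$ element and $\widehat{\omega_\xi}$ is the unique path through the stem, holds iff $\gamma\vert_n\in\omega_\xi$, i.e. iff $p_{\gamma\vert_n}\in\xi$. Hence $\Phi^{-1}(W_n^\gamma)=\{\phi_\xi\mid p_{\gamma\vert_n}\in\xi\}$, which is one of the basic open sets of (\ref{Topology}); so $\Phi$ is continuous.

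For openness I would use the same dictionary in the other direction together with bijectivity: for a basic $W$ as above, $\Phi(W)=\{\gamma\in X_A\mid t_1,\dots,t_k\in\omega(\gamma),\ s_1,\dots,s_l\notin\omega(\gamma)\}$, where $\omega(\gamma)$ is the stem of Definition~\ref{Def:StemInfinitePath} (using the bijection of Remark~\ref{Rem:Nou}). By the translation above, each condition $t_i\in\omega(\gamma)$ cuts out a cylinder and each $s_j\notin\omega(\gamma)$ the complement of a cylinder, so $\Phi(W)$ is a finite intersection of clopen sets and hence open. Therefore $\Phi$ is open, and being a continuous open bijection it is a homeomorphism. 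The main obstacle is the translation step: one must be careful with the modular bookkeeping of Proposition~\ref{Prop:TightFitersLAB} --- matching an arbitrary integer label $m_r$ against the normalized range $[1,A_{i_r,i_{r+1}}]$ --- and with the distinction between $p$-type and $h$/$q$-type generators, so that every elementary condition genuinely becomes clopen in the cylinder topology of $X_A$.
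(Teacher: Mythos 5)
Your proposal is correct and takes essentially the same approach as the paper: both rest on the dictionary translating each condition $p_f\in\xi$ into a cylinder (clopen) condition on the path $\widehat{\omega_\xi}$ --- the paper phrases this via an auxiliary action $x\cdot\gamma$ and a max-length argument showing the transported topology from $\{0,1\}^{\Gamma}$ coincides with the cylinder topology, while you invoke Proposition \ref{Prop:TightFitersLAB} directly and use clopenness of the elementary conditions --- and both then conclude that $\Phi$ is a continuous open bijection, hence a homeomorphism.
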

\begin{proof}
First notice that, given an element $x\in \mathcal{S}^{A,B}$, it is either a projection in $E$, $u_i^t$ for some $t\in \N$, or can be written as $x=s_I u_{s(I)}^t s_J^*$ for $I,J$ multinindices of minimum length and $t\in \Z$. If $n$ denote the length of $J$, then given any $\gamma\in X_A$ the product $x\cdot \gamma\vert_n$ can be either zero, or $s_I u_{s(I)}^t$ if $\gamma= s_J\gamma'$. Hence, we can define the operation $x\cdot \gamma$ by zero is so is $x\cdot \gamma\vert_n$, or by $s_I u_{s(I)}^t\gamma'$ otherwise. Obviously, when $x$ is a projection, $x\cdot \gamma\ne 0$ if and only if $x\cdot \gamma=\gamma$. If $\xi\subset E$, then $x\in \xi$ if and only if $x\cdot\widehat{\omega_{\xi}}\ne 0$ if and only if $\phi_{\xi}(x)=1$.

By using the identification given by the map $\omega$ defined before and Remark \ref{Rem:Nou}, we can endow a topology on $X_A$ through the inclusion $X_A\subset \{ 0, 1\}^{\Gamma}$. In particular, the basic open neighborhoods of $\eta$ in $X_A$ are of the form
$$W=\{ \gamma \in X_A\mid {t_1}, \dots ,{t_k}\in \omega(\gamma) \mbox{ and } {s_1}, \dots , {s_l}\not\in \omega(\gamma)\}$$
for some $t_1, \dots ,t_k, s_1, \dots ,s_l\in \Gamma$. By the previous paragraph, we can express these open sets as
$$W=\{ \gamma \in X_A\mid p_{t_i}\cdot \gamma=\gamma \mbox{ and } p_{s_j}\cdot \gamma =0\}$$
for $t_1, \dots ,t_k, s_1, \dots ,s_l\in \Gamma$. Also, for $\gamma \in X_A$ and $n\in\N$, we can express
$$W_n^{\gamma}=\{ \tau \in X_A\mid p_{(\gamma\vert_n)}\cdot \tau=\tau\}.$$
In particular, defining $t_1=\gamma\vert _n$ and taking any $s_1\in \Gamma$ such that $s_1^*\cdot \gamma\vert_n=0$, we have that $W_n^{\gamma}$ is an open neighborhood of $\gamma$ of the form $W=\{ \gamma \in X_A\mid {t_1}\in \omega(\gamma) \mbox{ and } {s_1}\not\in \omega(\gamma)\}$.

Now, fix $W=\{ \gamma \in X_A\mid {t_1}, \dots ,{t_k}\in \omega(\gamma) \mbox{ and } {s_1}, \dots , {s_l}\not\in \omega(\gamma)\}$ an open neighborhood of $\eta\in X_A$; in particular $p_{t_i}\cdot \eta=\eta$ and $p_{s_j}\cdot \eta=0$. Set $n=\max\{ \vert t_1\vert, \dots, \vert t_k \vert ,\vert s_1\vert, \dots,\vert s_l\vert\}$; we will assume that the length of $u_i$ is zero. Thus, if $\omega \in W_n^{\eta}$, since $\omega=\eta\vert_n\cdot \widehat{\omega}$, we have that $p_{t_i}\cdot \omega=\omega$ and $p_{s_j}\cdot \omega=0$. Hence, $W_n^{\eta}\subset W$, whence, both topologies coincide on $X_A$.

Thus, for any $W=\{ \phi_{\xi}\in \widehat{E}_{\text{tight}}\mid p_{t_1}, \dots ,p_{t_k}\in {\xi} \mbox{ and } p_{s_1}, \dots , p_{s_l}\not\in {\xi}\}$ we have that $\Phi (W)=\{ \gamma \in X_A\mid {t_1}, \dots ,{t_k}\in \omega(\gamma) \mbox{ and } {s_1}, \dots , {s_l}\not\in \omega(\gamma)\}$. Hence, $\Phi:\widehat{E}_{\text{tight}}\rightarrow X_A$ is a continuous open bijective map, and thus an homeomorphism.
\end{proof}

The final step is to describe the action $\alpha: \mathcal{S}^{A,B}\rightarrow \widehat{E}_{\text{tight}}$ when considered as an action $\alpha: \mathcal{S}^{A,B}\rightarrow X_A$ via the homeomorphism $\Phi$.

\begin{noname}[{\cite[Section 10]{Exel1}}]\label{LastNote}
{\rm Let $s\in \mathcal{S}^{A,B}$, and consider two open sets of $\widehat{E}_{\text{tight}}$:
$$D_{s^*s}=\{ \phi \in \widehat{E}_{\text{tight}}\mid \phi(s^*s)=1\}$$
and
$$D_{ss^*}=\{ \phi \in \widehat{E}_{\text{tight}}\mid \phi(ss^*)=1\}.$$
Then, the action $\alpha: \mathcal{S}^{A,B}\rightarrow \widehat{E}_{\text{tight}}$ is defined by (partial) homeomorphisms
$$\Theta_s: D_{s^*s}\longrightarrow D_{ss^*}$$
under the rule $\Theta_s(\phi)(x)=\phi(s^*xs)$ \cite[Proposition 10.3]{Exel1}.}
\end{noname}
\vspace{.2truecm}

Now, we are ready to prove the main result of this section

\begin{theorem}\label{Thm:RightPartialAction}
Under the identification provided by the homeomorphism $\Phi:\widehat{E}_{\text{tight}}\rightarrow X_A$ of Proposition
\ref{Prop:TopologyXA}, the usual  action $\alpha: \mathcal{S}^{A,B}\rightarrow \widehat{E}_{\text{tight}}$ becomes an action $\alpha: \mathcal{S}^{A,B}\rightarrow X_A$ given by multiplication of elements of $X_A$ on the left by elements of $\mathcal{S}^{A,B}$.
\end{theorem}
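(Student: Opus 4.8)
The plan is to show that for each $s\in\mathcal{S}^{A,B}$ and each $\phi=\phi_{\xi}$ lying in the domain $D_{s^*s}$, writing $\gamma=\Phi(\phi_{\xi})=\widehat{\omega_{\xi}}$, one has $\Phi(\Theta_s(\phi))=s\cdot\gamma$; since $\Phi$ is a homeomorphism (Proposition \ref{Prop:TopologyXA}), this identifies the transported action with left multiplication. Because a tight character is determined by its values on the projections $p_t$ — indeed, by the proof of Proposition \ref{Prop:TopologyXA}, for any projection $y\in E$ and any tight filter $\xi$ one has $\phi_{\xi}(y)=1$ precisely when $y\cdot\widehat{\omega_{\xi}}=\widehat{\omega_{\xi}}$, and $y\cdot\widehat{\omega_{\xi}}=0$ otherwise — it suffices to check, for every projection $p_t$, the equivalence
$$\Theta_s(\phi)(p_t)=1\iff p_t\cdot(s\cdot\gamma)=s\cdot\gamma.$$

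First I would record two algebraic facts inside the inverse semigroup $\mathcal{S}^{A,B}$. Since the idempotents of an inverse semigroup commute, $s^*p_ts$ is again a projection, and moreover $p_ts=s\,(s^*p_ts)$; the latter follows from $p_t(ss^*)=(ss^*)p_t$ together with $ss^*s=s$. With these in hand the left-hand side unwinds by the definition of $\Theta_s$ in (\ref{LastNote}) and the characterization recalled above:
$$\Theta_s(\phi)(p_t)=\phi(s^*p_ts)=1\iff (s^*p_ts)\cdot\gamma=\gamma,$$
where we used that $s^*p_ts$ is a projection. Thus the whole statement reduces to proving
$$(s^*p_ts)\cdot\gamma=\gamma\iff p_t\cdot(s\cdot\gamma)=s\cdot\gamma.$$

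To prove this last equivalence I would use that left multiplication is associative on $X_A$, so that $p_t\cdot(s\cdot\gamma)=(p_ts)\cdot\gamma=\bigl(s\,(s^*p_ts)\bigr)\cdot\gamma=s\cdot\bigl((s^*p_ts)\cdot\gamma\bigr)$. Hence $(s^*p_ts)\cdot\gamma=\gamma$ immediately forces $p_t\cdot(s\cdot\gamma)=s\cdot\gamma$. For the converse I would apply $s^*$ on the left and invoke the domain hypothesis $\phi\in D_{s^*s}$, i.e.\ $(s^*s)\cdot\gamma=\gamma$: from $s\cdot\bigl((s^*p_ts)\cdot\gamma\bigr)=s\cdot\gamma$ one gets $(s^*s)\cdot\bigl((s^*p_ts)\cdot\gamma\bigr)=(s^*s)\cdot\gamma$, and since $s^*s\,s^*p_ts=s^*p_ts$ and $(s^*s)\cdot\gamma=\gamma$, this yields $(s^*p_ts)\cdot\gamma=\gamma$. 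This closes the equivalence, and hence the theorem.

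The main obstacle I expect is the careful, partial-function bookkeeping hidden in the phrase \emph{left multiplication is associative on $X_A$}. The operation $x\cdot\gamma$ defined in the proof of Proposition \ref{Prop:TopologyXA} is only partially defined — it returns $0$ unless $\gamma$ begins with the appropriate $s_J$ — and the factors $u_{s(I)}^t$ shift the leading index through the relations $s_{i,j,n}u_j=s_{i,j,n+A_{i,j}}$ and $u_is_{i,j,n}=s_{i,j,n+B_{i,j}}$ of Definition \ref{Def:KatAlgAlgebra}. One must therefore verify that $(p_ts)\cdot\gamma$ and $p_t\cdot(s\cdot\gamma)$ agree \emph{as elements of} $X_A\cup\{0\}$, matching the domain conditions: concretely, that $D_{s^*s}$ corresponds under $\Phi$ exactly to $\{\gamma\in X_A\mid s\cdot\gamma\neq 0\}$ and $D_{ss^*}$ to the image $\{s\cdot\gamma\mid \gamma\in D_{s^*s}\}$, so that $\Theta_s$ is genuinely realized by $\gamma\mapsto s\cdot\gamma$ with inverse $\gamma\mapsto s^*\cdot\gamma$. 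Once this compatibility of the partial products with the semigroup multiplication is nailed down, the remainder is the purely formal manipulation above.
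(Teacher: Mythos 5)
Your proposal is correct, and it follows essentially the same route as the paper: transport $\Theta_s$ through the homeomorphism $\Phi$ of Proposition \ref{Prop:TopologyXA} and identify it with left multiplication $\gamma\mapsto s\cdot\gamma$ on $X_A$. The paper's own proof is in fact terser than yours—it applies $\Phi$ to the domains $D_{s^*s}$, $D_{ss^*}$ and simply asserts that the resulting maps correspond to $\Theta_s$—so your reduction to equality of tight characters on projections, via $\Theta_s(\phi)(p_t)=\phi(s^*p_ts)$ and the identity $p_ts=s(s^*p_ts)$, together with the partial-product associativity bookkeeping you flag (which the paper also passes over in silence), supplies exactly the verification the paper leaves implicit.
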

\begin{proof}
Fix an element $s\in \mathcal{S}^{A,B}$. If we apply the homeomorphism $\Phi$ to the open sets $D_{s^*s}$ and $D_{ss^*}$, then we obtain open sets
$$X_{s^*s}=\{ \gamma\in X_A\mid s^*s\gamma=\gamma\}$$
and
$$X_{ss^*}=\{ \gamma\in X_A\mid ss^*\gamma=\gamma\}=\{ \gamma\in X_A\mid \gamma=s\gamma'\}.$$
Moreover, the maps
$$\alpha_s: X_{s^*s}\longrightarrow X_{ss^*}$$
given by $\alpha_s(\gamma)=s\cdot \gamma$
are well-defined homeomorphisms, which correspond to $\Theta$ under the identification given by $\Phi$. Hence, the usual action $\alpha$ of  $\mathcal{S}^{A,B}$ on $\widehat{E}_{\text{tight}}$ becomes the natural action of  $\mathcal{S}^{A,B}$ on $X_A$ given by multiplication on the left, as desired.
\end{proof}

Because of Proposition \ref{Prop:TopologyXA} and Theorem \ref{Thm:RightPartialAction}, we can apply \cite[Section 4]{Exel1} to give a useful picture of $\mathcal{G}_{\LAB}$. To be concrete

\begin{noname}\label{GroupoidPicture}
{\rm Consider the set
$$\Omega=\{ (s,x)\in \mathcal{S}^{A,B}\times X_A\mid x\in X_{s^*s}\}.$$
Given $(s,x), (t,y)\in \Omega$, we say that $(s,x)\sim (t,y)$ if $x=y$, and there exists $e\in E(\mathcal{S}^{A,B})$ such that $x\in X_e$ and $se=te$. By replacing $e$ by $es^*st^*t$, we can assume that $e\leq s^*s, t^*t$. Moreover, since $x=y$, we conclude that either $s^*s\leq t^*t$ or $t^*t\leq s^*s$.

Hence, $\mathcal{G}_{\LAB}$ is isomorphic to the groupoid of germs $\mathcal{G}:=\Omega/\sim$, where $\mathcal{G}^{(0)}$ is identified with $X_A$ by the rule $[e,x]\leftrightarrow x$ for any $x\in X_A$ and any $e\in E(\mathcal{S}^{A,B})$ such that $x\in X_e$. Under this identification, the maps $d,r:\mathcal{G}\rightarrow \mathcal{G}^{(0)}$ are given by $d[s,x]=x$ and $r[s,x]=s\cdot x$. The composition rule is $[s,x]\cdot [t,y]=[st, y]$ whenever $x=t\cdot y$, while $[s,x]^{-1}=[s^*, s\cdot x]$. Recall that the sets $\Omega (s,U):=\{ (s, \omega) \mid s\in \mathcal{S}^{A,B}, \omega \in U \}$, where $U\subset X_{s^*s}$ is an open set of $X_A$, give us a basis of slices (open compact sets) of $\mathcal{G}$.
}
\end{noname}

An interesting consequence is the following

\begin{corollary}\label{Corol:Etale}
The groupoid $\mathcal{G}_{\LAB}$ is \'etale with second countable unit space.
\end{corollary}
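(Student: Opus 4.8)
The plan is to read both properties directly off the concrete presentation of $\mathcal{G}_{\LAB}$ as the groupoid of germs $\mathcal{G}=\Omega/\sim$ obtained in (\ref{GroupoidPicture}). Étaleness will be a formal consequence of the fact that the slices $\Omega(s,U)$ form a basis for the topology, while second countability of the unit space will follow once we observe that the alphabet defining $X_A$ is countable. Neither step is a genuine obstacle: the étale property is essentially packaged into the construction of the groupoid of germs, and the only point requiring real verification is the countability of the alphabet, with the mild subtlety that $N$ may be infinite.

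For the étale property, recall from (\ref{GroupoidPicture}) that the sets $\Omega(s,U)$, with $s\in\mathcal{S}^{A,B}$ and $U\subseteq X_{s^*s}$ open, constitute a basis of slices for $\mathcal{G}$. On such a slice the source map acts by $d[s,\omega]=\omega$, hence restricts to a continuous bijection onto $U$; its inverse $\omega\mapsto [s,\omega]$ is continuous as well, so $d|_{\Omega(s,U)}$ is a homeomorphism onto the open set $U$. Likewise $r[s,\omega]=s\cdot\omega=\alpha_s(\omega)$ restricts to a homeomorphism of $\Omega(s,U)$ onto the open set $\alpha_s(U)\subseteq X_{ss^*}$, using that $\alpha_s$ is a homeomorphism by Theorem \ref{Thm:RightPartialAction}. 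Thus $d$ and $r$ are local homeomorphisms, which is exactly the defining property of an étale groupoid; equivalently, this is the general fact that the tight groupoid of germs of an inverse semigroup action is étale (\cite[Section 4]{Exel1}).

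It remains to check that $\mathcal{G}^{(0)}$ is second countable. Via the homeomorphism $\Phi$ of Proposition \ref{Prop:TopologyXA} we identify $\mathcal{G}^{(0)}$ with $X_A$, the space of infinite paths in the alphabet $\Sigma=\{g_{i,j,n}\mid 1\le i\le N,\ j\in\OmA(i),\ 1\le n\le A_{i,j}\}$. The key observation is that $\Sigma$ is countable: the index $i$ ranges over $\{1,2,\dots,N\}$ with $N\in\N\cup\{\infty\}$, each $\OmA(i)$ is finite by Definition \ref{Def:KatAlgData}, and for each admissible pair $(i,j)$ only the finitely many values $1\le n\le A_{i,j}$ occur, so $\Sigma$ is a countable union of finite sets. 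Regarding $\Sigma$ as a countable discrete space, $X_A$ embeds as a subspace of the countable product $\Sigma^{\N}$ carrying the product topology; a countable product of second countable spaces is second countable, and this property passes to subspaces, whence $X_A$, and therefore $\mathcal{G}^{(0)}$, is second countable.
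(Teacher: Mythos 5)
Your proof is correct and follows essentially the same route as the paper: identify $\mathcal{G}_{\LAB}^{(0)}$ with $X_A$ via Proposition \ref{Prop:TopologyXA}, observe that $X_A$ is second countable, and read off the local homeomorphism property of $d$ and $r$ from the slices $\Omega(s,U)$ in the picture of (\ref{GroupoidPicture}). The only difference is that you spell out the two facts the paper asserts tersely --- the slice-by-slice verification of \'etaleness and the countable-alphabet argument for second countability of $X_A$ --- which is a faithful elaboration rather than a different approach.
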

\begin{proof}
By Proposition \ref{Prop:TopologyXA}, $\mathcal{G}_{\LAB}^{(0)}$ is homeomorphic to $X_A$, which is a (locally) compact Hausdorff space. Also, $X_A$ is second countable. Finally, the picture of $\mathcal{G}_{\LAB}$ given in (\ref{GroupoidPicture}) proves that the map $r:\mathcal{G}_{\LAB}\rightarrow \mathcal{G}_{\LAB}^{(0)}$ is a local homeomorphism.
\end{proof}

So, we have the following result

\begin{proposition}\label{Prop:OABisoPartial}
The $C^*$-algebra $C^*(\mathcal{G}_{\LAB})$ is isomorphic to the inverse semigroup crossed product $C_{0}(\mathcal{G}_{\LAB}^{(0)})\rtimes_{\alpha} \mathcal{S}^{A,B}$.
\end{proposition}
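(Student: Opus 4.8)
The plan is to recognize $\mathcal{G}_{\LAB}$ as the groupoid of germs of the action $\alpha$ of $\mathcal{S}^{A,B}$ on $X_A$, and then to invoke the standard identification between the $C^*$-algebra of such a groupoid of germs and the associated inverse semigroup crossed product. All the structural work is already in place: by Proposition \ref{Prop:TopologyXA} the unit space $\mathcal{G}_{\LAB}^{(0)}=\widehat{E}_{\text{tight}}$ is homeomorphic to $X_A$, and by Theorem \ref{Thm:RightPartialAction} the canonical action on $\widehat{E}_{\text{tight}}$ translates, under this homeomorphism, into the action of $\mathcal{S}^{A,B}$ on $X_A$ by left multiplication. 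Together with the explicit description in (\ref{GroupoidPicture}), this exhibits $\mathcal{G}_{\LAB}$ precisely as the groupoid of germs $\mathcal{S}^{A,B}\ltimes X_A$, which is exactly the input required by the general crossed-product theorem.

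First I would verify that the hypotheses needed for that theorem are in force. By Corollary \ref{Corol:Etale} the groupoid $\mathcal{G}_{\LAB}$ is \'etale with locally compact, Hausdorff, second countable unit space $X_A$; and the basis of slices $\Omega(s,U)$ of (\ref{GroupoidPicture}) provides precisely the compact-open bisections indexed by $\mathcal{S}^{A,B}$ out of which the crossed product is assembled. I would then define the candidate isomorphism
$$\Psi: C_0(\mathcal{G}_{\LAB}^{(0)})\rtimes_{\alpha}\mathcal{S}^{A,B}\longrightarrow C^*(\mathcal{G}_{\LAB})$$
on generating terms by sending $f\delta_s$, with $f\in C_0(X_{ss^*})$, to the element of $C_c(\mathcal{G}_{\LAB})$ supported on the slice $\Omega(s,X_{s^*s})$ whose value at the germ $[s,x]$ is $f(s\cdot x)$. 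The verification that $\Psi$ carries the crossed-product multiplication and involution to convolution and adjoint is a direct computation using the composition and inversion rules $[s,x]\cdot[t,y]=[st,y]$ and $[s,x]^{-1}=[s^*,s\cdot x]$ recorded in (\ref{GroupoidPicture}); since the terms $f\delta_s$ span a dense $\ast$-subalgebra and the slices $\Omega(s,U)$ cover $\mathcal{G}_{\LAB}$, density of the image follows, and the universal properties of the two constructions promote $\Psi$ to an isomorphism of $C^*$-algebras.

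The step I expect to require the most care is reconciling the germ equivalence $\sim$ of (\ref{GroupoidPicture}) with the relations imposed when completing the algebraic crossed product: one must check that $f\delta_s$ and $g\delta_t$ are identified in $C_0(X_A)\rtimes_{\alpha}\mathcal{S}^{A,B}$ exactly when the pairs $(s,x)$ and $(t,x)$ determine the same germ, that is, exactly when $se=te$ for some idempotent $e\in E(\mathcal{S}^{A,B})$ with $x\in X_e$. This well-definedness of $\Psi$ is the genuine content of the general theorem, and here it is clean because $\mathcal{G}_{\LAB}$ is \'etale with the explicit slice basis above. As a final remark, since by Theorem \ref{Prop: amenable} the groupoid $\mathcal{G}_{\LAB}$ is amenable, the full and reduced completions coincide on both sides, so there is no ambiguity as to whether the crossed product is matched to $C^*(\mathcal{G}_{\LAB})$ or to $C^*_r(\mathcal{G}_{\LAB})$, and the stated isomorphism follows.
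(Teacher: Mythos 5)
Your proposal is correct and follows essentially the same route as the paper: both verify that $\mathcal{G}_{\LAB}$ is the groupoid of germs of the action $\alpha$ on $X_A$ (via Proposition \ref{Prop:TopologyXA}, Theorem \ref{Thm:RightPartialAction} and (\ref{GroupoidPicture})), check the \'etale/second-countability/countability hypotheses from Corollary \ref{Corol:Etale}, and then appeal to the standard identification of the germ-groupoid $C^*$-algebra with the inverse semigroup crossed product, which the paper simply cites as \cite[Proposition 9.9]{Exel1} while you sketch its proof. Your closing remark on amenability is unnecessary, since the general theorem already matches the full crossed product with the full groupoid $C^*$-algebra.
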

\begin{proof}
By Corollary \ref{Corol:Etale}, $\mathcal{G}_{\LAB}$ is an \'etale groupoid with second countable unit space. Clearly, $\mathcal{S}^{A,B}$ is countable. Hence, the result holds from \cite[Proposition 9.9]{Exel1}.
\end{proof}

Thus, we obtain the following picture of $\OAB$

\begin{corollary}\label{Corol:RightPictureOAB}
The $C^*$-algebra $\OAB$ is isomorphic to the inverse semigroup crossed product $C_0(X_A)\rtimes_{\alpha} \mathcal{S}^{A,B}$.
\end{corollary}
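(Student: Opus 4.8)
The plan is to assemble the statement from the chain of identifications established throughout Sections 4 and 5, so that essentially no new computation is required. The starting point is Corollary \ref{Cor:OABisoGroupoid}, which gives $\OAB \cong C^*(\mathcal{G}_{\LAB})$. To this I would apply Proposition \ref{Prop:OABisoPartial}, which realizes the groupoid $C^*$-algebra as the inverse semigroup crossed product $C_0(\mathcal{G}_{\LAB}^{(0)}) \rtimes_{\alpha} \mathcal{S}^{A,B}$. Composing these two isomorphisms already yields $\OAB \cong C_0(\mathcal{G}_{\LAB}^{(0)}) \rtimes_{\alpha} \mathcal{S}^{A,B}$, so the only remaining task is to replace the abstract unit space $\mathcal{G}_{\LAB}^{(0)}$ by the concrete path space $X_A$.

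For that replacement I would invoke Proposition \ref{Prop:TopologyXA}, which provides a homeomorphism $\Phi:\widehat{E}_{\text{tight}}\rightarrow X_A$; since $\mathcal{G}_{\LAB}^{(0)}$ is identified with $\widehat{E(\SAB)}_{\text{tight}}$, this yields $C_0(\mathcal{G}_{\LAB}^{(0)}) \cong C_0(X_A)$. The one point that must be checked is that this homeomorphism intertwines the two actions, i.e.\ that transporting $\alpha$ along $\Phi$ produces precisely the left-multiplication action of $\mathcal{S}^{A,B}$ on $X_A$. This compatibility is exactly the content of Theorem \ref{Thm:RightPartialAction}, which shows that $\Theta_s$ corresponds to $\alpha_s$ (given by $\gamma\mapsto s\cdot\gamma$) under the identification given by $\Phi$, for every $s\in\mathcal{S}^{A,B}$.

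Granting this equivariance, the induced map is an isomorphism of crossed products, because an equivariant homeomorphism of the spectra lifts functorially to an isomorphism of the associated $C_0$-crossed products by the inverse semigroup. Chaining this with the isomorphisms above delivers
$$\OAB \cong C^*(\mathcal{G}_{\LAB}) \cong C_0(\mathcal{G}_{\LAB}^{(0)}) \rtimes_{\alpha} \mathcal{S}^{A,B} \cong C_0(X_A) \rtimes_{\alpha} \mathcal{S}^{A,B},$$
as claimed. I do not expect any genuine obstacle here: every nontrivial step has already been carried out, and the only care required is the bookkeeping needed to confirm that the action $\alpha$ appearing in Proposition \ref{Prop:OABisoPartial} is the same one whose transport along $\Phi$ is computed in Theorem \ref{Thm:RightPartialAction}. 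The mild subtlety, if any, lies in verifying that the crossed-product construction respects equivariant homeomorphisms of the unit space, but this is standard for the \'etale framework of \cite{Exel1} and is guaranteed once $\mathcal{G}_{\LAB}$ is known to be \'etale with second countable unit space, as recorded in Corollary \ref{Corol:Etale}.
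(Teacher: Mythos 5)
Your proposal is correct and follows exactly the same route as the paper, which proves this corollary by citing the same four results (Corollary \ref{Cor:OABisoGroupoid}, Proposition \ref{Prop:OABisoPartial}, Proposition \ref{Prop:TopologyXA} and Theorem \ref{Thm:RightPartialAction}) as a direct consequence. Your additional remark on equivariance lifting to an isomorphism of crossed products is just a more explicit spelling-out of the bookkeeping the paper leaves implicit.
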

\begin{proof}
The result is a direct consequence of Corollary \ref{Cor:OABisoGroupoid}, Proposition \ref{Prop:OABisoPartial}, Proposition \ref{Prop:TopologyXA} and Theorem \ref{Thm:RightPartialAction}.
\end{proof}

Notice that, when $B=(0)$, Corollary \ref{Corol:RightPictureOAB} recover the picture of the Exel-Laca algebra $\mathcal{O}_A$ given in \cite{ExelLaca}.

\section{Invariant sets and minimality}

In this section we will study the invariant open subsets of $X_A$ by the action of $\mathcal{S}^{A,B}$, and we characterize the minimality of $\mathcal{G}_{\LAB}$.

\begin{definition}
{\rm
If $S$ is a semigroup, and $\tau$ is an action by (partial) homeomorphisms on a topological space $X$, then:
\begin{enumerate}
\item We say that $x,y\in X$ are of equivalent trajectory under $\tau$, denoted $x\sim_{\tau}y$ if there exist $s,t\in S$ such that $\tau_s(x)$ and $ \tau_t(y)$ are defined and coincide.
\item We say that a subset $W$ of $X$ is invariant if for every $y\in W$ and $x\in X$, $x\sim_{\tau}y$ implies that $x\in W$.
\item We say that $X$ is irreducible if  it has no proper open invariant subsets.
\end{enumerate}
}
\end{definition}

In our case, we have the following

\begin{lemma}\label{Lem:ActionInvariant}
Given a subset $W$ of $X_A$, the following are equivalent:
\begin{enumerate}
\item $W$ is invariant.
\item For every $t\in \mathcal{S}^{A,B}$ we have that $\alpha_t(W\cap X_{t^*t})\subseteq W$.
\end{enumerate}
\end{lemma}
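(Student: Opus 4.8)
The plan is to prove the two implications directly from the definitions, leaning on the two structural features of the inverse semigroup action recalled in (\ref{LastNote}): the domain of $\alpha_s$ is $X_{s^*s}$ and its range is $X_{ss^*}$, and $\alpha_{s^*}$ is the inverse homeomorphism of $\alpha_s$, so that $\alpha_{s^*}\circ\alpha_s=\alpha_{s^*s}=\mathrm{id}_{X_{s^*s}}$ (and symmetrically with $s$ and $s^*$ interchanged). Once these are in hand, the whole statement reduces to careful bookkeeping of domains, with $\tau=\alpha$ throughout.

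For $(1)\Rightarrow(2)$ I would fix $t\in\mathcal{S}^{A,B}$ and $y\in W\cap X_{t^*t}$, and set $x:=\alpha_t(y)\in X_{tt^*}$. The point is to exhibit $x\sim_\tau y$: taking $s=t^*$ gives $\alpha_{t^*}(x)=\alpha_{t^*}(\alpha_t(y))=\alpha_{t^*t}(y)=y$, while the idempotent $t^*t$ gives $\alpha_{t^*t}(y)=y$, so $\alpha_{t^*}(x)=\alpha_{t^*t}(y)$ and both maps are defined at their arguments. Since $y\in W$ and $W$ is invariant, invariance forces $x\in W$, which is exactly the assertion $\alpha_t(W\cap X_{t^*t})\subseteq W$.

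For $(2)\Rightarrow(1)$ I would take $y\in W$ and $x\in X_A$ with $x\sim_\tau y$, so that there exist $s,t$ with $z:=\alpha_s(x)=\alpha_t(y)$, both defined. Because $\alpha_t(y)$ is defined, $y\in W\cap X_{t^*t}$, so (2) gives $z=\alpha_t(y)\in W$. Next I push $z$ back to $x$ through $s^*$: from $\alpha_s(x)$ being defined I get $x\in X_{s^*s}$ and $z=\alpha_s(x)\in X_{ss^*}$, whence $\alpha_{s^*}(z)=\alpha_{s^*}(\alpha_s(x))=\alpha_{s^*s}(x)=x$. Since $z\in W\cap X_{ss^*}=W\cap X_{(s^*)^*s^*}$, applying (2) to the element $s^*$ yields $x=\alpha_{s^*}(z)\in W$. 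Hence $W$ is invariant.

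The argument is essentially pure bookkeeping, and the closest thing to an obstacle is keeping the domains straight: the key observation that makes the second implication go through is that the range $X_{ss^*}$ of $\alpha_s$ coincides with the domain of $\alpha_{s^*}$, so that condition (2) for the element $s^*$ may legitimately be applied to $z$. Everything else follows from $\alpha_{s^*}\alpha_s=\mathrm{id}_{X_{s^*s}}$ and from idempotents acting as the identity on their domains.
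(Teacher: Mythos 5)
Your proof is correct and takes essentially the same route as the paper: both directions are direct verifications from the definitions, using the inverse-semigroup bookkeeping ($\alpha_{t^*}$ inverting $\alpha_t$, idempotents acting as the identity on their domains) to exhibit the relation $x\sim_\tau y$ or to pull points back into $W$. The only cosmetic difference is in $(2)\Rightarrow(1)$: the paper applies condition (2) once to the single element $s^*t$ (writing $x=\alpha_{s^*t}(y)$), whereas you apply it twice, first to $t$ and then to $s^*$; the two arguments are interchangeable.
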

\begin{proof}$\mbox{ }$\vspace{.1truecm}

$(1)\Rightarrow (2)$. Let $y\in W$, and let $t\in \mathcal{S}^{A,B}$ with $x=\alpha_t(y)=t\cdot y$. Now, set $s=tt^*$, and notice that $s\cdot x$ is defined, and moreover $s\cdot x=tt^*\cdot x=tt^*t\cdot y=t\cdot y$. Hence, $\alpha_s(x)=\alpha_t(y)$, and thus $x\in W$.

$(2)\Rightarrow (1)$. Let $y\in W$ and let $x\in X$ such that $x\sim_{\tau}y$. Hence, there exists $s,t\in \mathcal{S}^{A,B}$ such that $s\cdot x=t\cdot y$. Then, we have $x=s^*t\cdot y=\alpha_{s^*t}(y)$, and thus $x\in W$.
\end{proof}

\begin{remark}\label{Rem:InvGroupoid}
{\rm By Lemma \ref{Lem:ActionInvariant} and (\ref{GroupoidPicture}), minimality of $\mathcal{G}_{\LAB}$ (see e.g. \cite{A} for a definition) faithfully translates to irredutibility of $X_A$ under the action of $\mathcal{S}^{A,B}$.
}
\end{remark}

\begin{definition}\label{Def:Orbit}
{\rm Given a subset $C$ of $X_A$, we define the orbit of $C$ to be the set
$$\text{Orb}(C)=\bigcup\limits_{t\in \mathcal{S}^{A,B}}\alpha_t(C\cup X_{t^*t}).$$
Notice that $\text{Orb}(C)$ is the smallest invariant subset of $X_A$ containing $C$.

For each $\gamma\in X_A$ and each $n\in \N$ we define
$$\Delta_n^{\gamma}=\text{Orb}(W_n^{\gamma}).$$
Hence, $\{\Delta_n^{\gamma}\mid \gamma\in X_A,n\in \N\}$ is a fundamental system of invariant open neighborhoods of $X_A$. In particular, if $x\in \Sigma:= \{s_{i,j,n}\mid (i,j)\in \OmA, 1\leq n \leq A_{i,j}\}\cup \{u_i\mid 1\leq i\leq N\}$, since $\alpha_x(X_{x^*x})=X_{xx^*}$ and $\alpha_{x^*}(X_{xx^*})=X_{x^*x}$, then both open sets have the same orbit, that we will denote $\Delta_x$. Given $x=u_i$, note that $u_i^*u_i=u_iu_i^*=p_i$, so that $\alpha_{x}(X_{xx^*})=X_{xx^*}=X_{x^*x}$, and $p_i=s_{j,i,n}^*s_{j,i,n}$, so that the action of $u_i$ do not give extra information when studying invariance of open subsets of $X_A$. Thus, we can restrict our attention to the action of elements in  $\Phi =\{s_{i,j,n}\mid (i,j)\in \OmA, 1\leq n \leq A_{i,j}\}$
}
\end{definition}

Now, we have the following

\begin{lemma}\label{Lem:Small-Inv}
If $U\subset X_A$ is a nonempty invariant open set, then there exists $x\in \Phi$ such that $\Delta_x\subseteq U$.
\end{lemma}
\begin{proof}
By (\ref{Topology}) there exists $n\in \N$ and $\gamma \in X_A$ such that $W_n^{\gamma}\subseteq U$. Now, define $t:=\gamma\vert_n$ and $x:=t_n$. Without lose of generality we can assume that $n>1$, so that $t\in \mathcal{S}^A$, and $x\in \Phi$.

If $\eta\in W_n^{\gamma}$, then $\eta=t\cdot \widehat{\eta}$, so that $\widehat{\eta}=t^*\cdot\eta$. Since $t^*t=x^*x$, we conclude that $\alpha_{t^*}(\eta)=\widehat{\eta}\in X_{x^*x}$. Conversely, if $\eta \in X_{x^*x}$, then $\eta=x^*x\cdot \eta=t^*t\cdot \eta$, so that $\alpha_t(\eta)=t\cdot \eta\in W_n^{\gamma}$. Hence, $W_n^{\gamma}$ and $X_{x^*x}$ are in the same orbit, so that
$$\Delta_x=\Delta_n^{\gamma}\subseteq \text{Orb}(U)=U,$$
as desired.
\end{proof}

\begin{lemma}\label{Lem:Inv-Connect}
If $x,y\in \Phi$ and there exists $t\in \mathcal{S}^A$ starting at $x$ and ending at $y$, then $\Delta_y\subseteq \Delta_x$.
\end{lemma}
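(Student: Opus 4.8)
The plan is to convert the path hypothesis into two projection relations and then run a short orbit-chasing argument. Write $t=s_I\in\mathcal{S}^A$ in standard form as a positive path whose first generator is $x=s_{i,j,n}$ and whose last generator is $y=s_{k,l,m}$. The two facts I would record first are: since $t$ \emph{ends} at $y$, its terminal vertex is $l$, so $t^*t=q_l=y^*y$; and since $t$ \emph{starts} with the edge $x$, every infinite path of the form $t\cdot\gamma'$ begins with $x$, so $X_{tt^*}\subseteq X_{xx^*}$. In terms of the cylinder sets of $X_A$ this reads
$$X_{t^*t}=X_{y^*y}=X_{q_l}\qquad\text{and}\qquad X_{tt^*}\subseteq X_{xx^*}.$$
Because $\Delta_x=\text{Orb}(X_{xx^*})$ is invariant (Definition \ref{Def:Orbit}) and $\Delta_y$ is by definition the smallest invariant set containing $X_{yy^*}$, it suffices to prove the single inclusion $X_{yy^*}\subseteq\Delta_x$.

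For this I would fix $\gamma\in X_{yy^*}$ and set $\gamma'=\alpha_{y^*}(\gamma)=y^*\cdot\gamma$, which lies in $X_{y^*y}=X_{q_l}$, so that $\gamma=y\cdot\gamma'$. The crucial point is that $\gamma'$ lies in the \emph{common} domain $X_{t^*t}=X_{y^*y}$, so $\alpha_t$ may be applied to it as well: put $\delta=\alpha_t(\gamma')=t\cdot\gamma'\in X_{tt^*}\subseteq X_{xx^*}\subseteq\Delta_x$. Now $\gamma$ and $\delta$ are of equivalent trajectory in the sense of Definition \ref{Def:Orbit}: applying $\alpha_{y^*}$ to $\gamma$ and $\alpha_{t^*}$ to $\delta$ both return $\gamma'$, since $t^*\cdot\delta=t^*t\cdot\gamma'=q_l\cdot\gamma'=\gamma'$ using $\gamma'\in X_{q_l}$. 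Hence $\gamma\sim_{\tau}\delta$, and as $\delta\in\Delta_x$ with $\Delta_x$ invariant, we get $\gamma\in\Delta_x$.

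This yields $X_{yy^*}\subseteq\Delta_x$, and therefore $\Delta_y\subseteq\Delta_x$, as desired. The only delicate step I anticipate is verifying the two projection identities cleanly: in particular, recognizing that ``ending at $y$'' forces $t^*t=y^*y$, so that the partial homeomorphisms $\alpha_t$ and $\alpha_y$ share the same domain $X_{q_l}$. Once this matching of domains is in place, the remainder is a routine use of the invariance of orbits and the definition of $\sim_{\tau}$.
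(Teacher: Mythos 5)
Your proof is correct and follows essentially the same route as the paper's: both rest on the identity $t^*t=y^*y$ and the observation that $\alpha_t$ maps its domain into $X_{xx^*}$, followed by taking orbits. The only (immaterial) difference is that you generate $\Delta_y$ from $X_{yy^*}$, which costs you the extra pullback $\gamma\mapsto y^*\cdot\gamma$, whereas the paper starts directly from $X_{y^*y}$ and applies $\alpha_t$ at once.
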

\begin{proof}
Let $t\in \mathcal{S}^A$ such that $x=t_1$ and $y=t_{\vert t\vert}$. Now, if $\eta\in X_{y^*y}$, then $\eta=y^*y\cdot \eta$. Since $y^*y=t^*t$, we have that $\eta\in X_{t^*t}$, and thus $\alpha_t(\eta)=t\cdot\eta\in X_{xx^*}$. Taking orbits, we conclude that $\Delta_y\subseteq \Delta_x$, as desired.
\end{proof}

Given $x\in \Phi$, we denote $T_x=\{y\in \Phi\mid t(x)=d(y)\}$. Since $A$ is row-finite, $\vert T_x\vert< \infty$ for every $x\in \Phi$. Then

\begin{corollary}\label{Corol:Inv-Transitive}
For any $x\in \Phi$, $\Delta _x=\bigcup\limits_{y\in T_x}\Delta_y$.
\end{corollary}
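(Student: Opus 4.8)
The plan is to reduce the identity to the fundamental decomposition of the vertex projection $x^*x$ coming from relation (iii) of Definition \ref{Def:KatAlgAlgebra}, and then to observe that the orbit operation $\text{Orb}(-)$ distributes over unions. Throughout, write $x=s_{i,j,n}$, so that $t(x)=j$ and $x^*x=q_j$; thus $X_{x^*x}$ is exactly the set of infinite paths in $X_A$ beginning at the vertex $j$, while $T_x=\{s_{j,k,m}\mid k\in\OmA(j),\,1\leq m\leq A_{j,k}\}$ consists precisely of the edges emanating from $j$.

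First I would record the key decomposition of the relevant cylinder set. By relation (iii) we have $x^*x = q_j = \sum_{k\in\OmA(j)}\sum_{m=1}^{A_{j,k}} s_{j,k,m}s_{j,k,m}^* = \sum_{y\in T_x} yy^*$, where the summands $yy^*=p_{j,k,m}$ are mutually orthogonal projections. Translating this into $X_A$ via the description of the sets $X_{s^*s}$, $X_{ss^*}$ in the proof of Theorem \ref{Thm:RightPartialAction}, a path $\gamma$ satisfies $x^*x\cdot\gamma=\gamma$ precisely when it begins at $j$, and this happens if and only if $\gamma$ begins with exactly one of the edges $y\in T_x$, i.e. $yy^*\cdot\gamma=\gamma$ for a unique $y\in T_x$. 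Hence $X_{x^*x}=\bigsqcup_{y\in T_x} X_{yy^*}$, a disjoint union indexed by the finitely many (since $A$ is row-finite) elements of $T_x$.

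It then remains to apply $\text{Orb}(-)$ to both sides. From the definition $\text{Orb}(C)=\bigcup_{t\in\mathcal{S}^{A,B}}\alpha_t(C\cap X_{t^*t})$ one checks immediately that the orbit of a union is the union of the orbits, since each $\alpha_t$ is a map and intersection with $X_{t^*t}$ distributes over unions. Combining this with the fact, recorded in Definition \ref{Def:Orbit}, that $X_{yy^*}$ and $X_{y^*y}$ lie in the same orbit, so that $\text{Orb}(X_{yy^*})=\Delta_y$, we obtain
\[
\Delta_x=\text{Orb}(X_{x^*x})=\text{Orb}\Big(\bigsqcup_{y\in T_x}X_{yy^*}\Big)=\bigcup_{y\in T_x}\text{Orb}(X_{yy^*})=\bigcup_{y\in T_x}\Delta_y,
\]
as desired. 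The inclusion $\bigcup_{y\in T_x}\Delta_y\subseteq\Delta_x$ can alternatively be seen directly from Lemma \ref{Lem:Inv-Connect}, since for each $y\in T_x$ the composite $xy\in\mathcal{S}^A$ is a path starting at $x$ and ending at $y$.

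The only genuine content, and the step to carry out with care, is the disjoint decomposition $X_{x^*x}=\bigsqcup_{y\in T_x}X_{yy^*}$: this rests on the orthogonality of the projections $yy^*$ together with the combinatorial fact that every path beginning at $j$ has a unique first edge among the generators indexed by $T_x$. The remaining manipulations — distributivity of $\text{Orb}(-)$ over unions and the identification $\text{Orb}(X_{yy^*})=\Delta_y$ — are formal.
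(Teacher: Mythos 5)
Your proposal is correct and rests on the same key fact as the paper's proof: every infinite path beginning at the vertex $t(x)$ factors through a unique first edge $y\in T_x$, giving $X_{x^*x}=\bigcup_{y\in T_x}X_{yy^*}$. The only organizational difference is that the paper derives $\bigcup_{y\in T_x}\Delta_y\subseteq\Delta_x$ from Lemma \ref{Lem:Inv-Connect} and uses the first-edge factorization only for the reverse inclusion, whereas you observe that the factorization is a disjoint equality of sets and obtain both inclusions at once by applying $\text{Orb}(-)$ and its distributivity over unions --- a harmless repackaging of the same argument.
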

\begin{proof}
By Lemma \ref{Lem:Inv-Connect}, $\bigcup\limits_{y\in T_x}\Delta_y\subseteq \Delta_x$. Now, let $\eta \in X_{x^*x}$. Since $\eta=x^*x\cdot \eta$, we have that $\eta=y\cdot \widehat{\eta}$ for some $y\in T_x$. Thus, $\eta\in X_{yy^*}$. Taking orbits, we conclude that $\Delta _x\subseteq\bigcup\limits_{y\in T_x}\Delta_y$, as desired.
\end{proof}

As immediate consequence, we get the following result

\begin{corollary}\label{Corol:Act-Transitive}
For the action $\alpha$ of $\mathcal{S}^{A,B}$ on $X_A$, the following are equivalent:
\begin{enumerate}
\item The matrix $A$ is irreducible.
\item The space $X_A$ is irreducible.
\item The groupoid $\mathcal{G}_{\LAB}$ is minimal.
\end{enumerate}
\end{corollary}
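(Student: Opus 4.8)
The plan is to prove the cyclic chain of implications $(1)\Rightarrow(2)\Rightarrow(3)\Rightarrow(1)$, exploiting the fact that by Remark \ref{Rem:InvGroupoid} the equivalence $(2)\Leftrightarrow(3)$ is already essentially available: minimality of $\mathcal{G}_{\LAB}$ translates faithfully into irreducibility of $X_A$ under the action of $\mathcal{S}^{A,B}$. So the genuine work lies in the equivalence of $(1)$ and $(2)$, and the preceding lemmas on the invariant open sets $\Delta_x$ have been set up precisely to carry this out.

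First I would prove $(1)\Rightarrow(2)$. Assume $A$ is irreducible and let $U\subseteq X_A$ be a nonempty invariant open set; the goal is to show $U=X_A$. By Lemma \ref{Lem:Small-Inv} there exists some $x\in \Phi$ with $\Delta_x\subseteq U$. Now irreducibility of $A$ means that for any two indices there is a path connecting them; translated to the alphabet $\Phi$, this says that for every $z\in \Phi$ there exists $t\in \mathcal{S}^A$ starting at $x$ and ending at $z$ (after suitably matching sources and ranges through the entries $A_{i,j}$). By Lemma \ref{Lem:Inv-Connect} each such connection gives $\Delta_z\subseteq \Delta_x\subseteq U$. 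Since every $\gamma\in X_A$ lies in some $\Delta_z$ (its first edge $z=\gamma\vert_1\in\Phi$ satisfies $\gamma\in X_{z z^*}\subseteq\Delta_z$), we conclude $X_A=\bigcup_{z\in\Phi}\Delta_z\subseteq U$, so $U=X_A$ and $X_A$ is irreducible.

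Next I would prove the contrapositive of $(2)\Rightarrow(1)$: if $A$ is \emph{not} irreducible, then $X_A$ is not irreducible. Reducibility of $A$ produces a proper nonempty subset $S$ of the index set that is ``forward-closed'' for the path structure: once a path enters a vertex associated to $S$ it cannot leave. The natural candidate for a proper invariant open set is $U=\bigcup_{x}\Delta_x$, where the union runs over those $x\in\Phi$ whose source lies in the appropriate reducibility class. Using Corollary \ref{Corol:Inv-Transitive}, which expresses $\Delta_x=\bigcup_{y\in T_x}\Delta_y$ as a union over the finitely many continuations $T_x$ of $x$, one checks that this $U$ is invariant (no action moves a path out of the class) and proper (paths living entirely in the complementary class are excluded). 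This exhibits a proper nonempty invariant open set, contradicting irreducibility.

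The equivalence $(2)\Leftrightarrow(3)$ is then immediate from Remark \ref{Rem:InvGroupoid}, closing the cycle. The main obstacle I anticipate is the bookkeeping in $(2)\Rightarrow(1)$: one must translate the matrix-theoretic notion of irreducibility (existence of powers of $A$ with positive entries) into the existence of connecting paths in $\mathcal{S}^A$ between arbitrary alphabet letters, and verify that the reducibility class genuinely yields an invariant set under \emph{all} of $\mathcal{S}^{A,B}$ rather than merely the generators in $\Phi$. Here the reduction in Definition \ref{Def:Orbit}, which shows the $u_i$ contribute no new invariance information so that one need only track the action of $\Phi$, together with Corollary \ref{Corol:Inv-Transitive}, does the heavy lifting and keeps this step manageable.
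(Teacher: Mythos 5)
Your proposal is correct and follows essentially the same route as the paper: $(1)\Rightarrow(2)$ via Lemma \ref{Lem:Small-Inv} combined with Lemma \ref{Lem:Inv-Connect}/Corollary \ref{Corol:Inv-Transitive}, $(2)\Rightarrow(1)$ by producing a proper open invariant set out of the $\Delta_x$'s when $A$ is reducible, and $(2)\Leftrightarrow(3)$ directly from Remark \ref{Rem:InvGroupoid}. The only difference is cosmetic: where you assemble a union of sets $\Delta_x$ over a forward-closed reducibility class, the paper simply notes that a single $\Delta_x$ (for $x,y\in\Phi$ chosen so that no element of $\mathcal{S}^A$ starts at $x$ and ends at $y$) already serves as the proper open invariant subset, which disposes of the bookkeeping you anticipated.
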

\begin{proof}$\mbox{ }$\vspace{.1truecm}

$(1)\Rightarrow (2)$ It is a direct consequence of Lemma \ref{Lem:Small-Inv} and Corollary \ref{Corol:Inv-Transitive}.

$(2)\Rightarrow (1)$ If $A$ is not irreducible, there exist $x,y\in \Phi$ such that no element of $\mathcal{S}^A$ starts at $x$ and ends at $y$. Hence, $\Delta_x$ turns out to be a proper open invariant subset, so we are done.

$(2)\Leftrightarrow (3)$ It is a consequence of Remark \ref{Rem:InvGroupoid}.
\end{proof}

\section{Essentially principal groupoids and topologically free actions}

In this section we will connect the notion of being topological free for $X_A$ with that of being essentially principal for $\mathcal{G}_{\LAB}$. Then, we will analyse fixed points and topological freeness of $X_A$. Given a subset $C$ of a topological space $X$, we denote its interior by $\text{Int}(C)$. Along this section we will assume that matrix $B$ satisfies Condition (E), so that $\mathcal{S}^{A,B}$ is a $E^*$-unitary semigroup (i.e. a $0$-$E$-semigroup).

\begin{definition}\label{Def:TopFree}
{\rm
Let  $S$ be an inverse  semigroup, and let $\tau$ be an action of $S$ on a topological space $X$.
\begin{enumerate}
\item Given $s\in S$ and $x\in X_{s^*s}$, we say  $x$ is a {fixed point} for $s$ if $\tau_s(x)=x$.
\item If $x$ is a  fixed point for $s$, we say that $x$ is a {trivial fixed point} if there exists $e\in E(S)$, such
that $x\in X_e$, and $se=e$.
\item We say that the action is {free} if  every fixed point for every $s$ in $S$ is trivial.
\item We say that the action is {topologically free} if, for every $s\in S$, the interior of the set of fixed
points for $s$ consists of trivial fixed points.
\end{enumerate}
}
\end{definition}

\begin{remark}\label{Remark: Ruy1}
{\rm $\mbox{ }$
\begin{enumerate}
\item It is easy to see that if $e$ is an idempotent elements in $S$, then every $x$ in $X_e$ is a trivial fixed
point for $e$.
\item On the other hand, when $S$ is $E^*$-unitary, it is not hard to show that for $s\in S\setminus
E(S)$, every fixed point of $s$ is nontrivial.
\end{enumerate}
}
\end{remark}

Thus, for the special case of $E^*$-unitary inverse semigroups we have:

\begin{theorem}\label{Theorem: Ruy 2}
Let $\tau$ be an action of an $E^*$-unitary inverse semigroup $S$ on a topological space $X$. Then:\begin{enumerate}
\item $\tau$ is free if and only if, for every $s\in S\setminus E(S)$, the set of fixed points for $s$ is empty.
\item $\tau$ is topologically free if and only if, for every $s\in S\setminus E(S)$, the set of fixed points for $s$ has
empty interior.
\end{enumerate}
\end{theorem}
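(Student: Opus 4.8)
The plan is to observe that both equivalences follow almost formally once we split $S$ into its idempotent part $E(S)$ and its complement $S\setminus E(S)$, and then feed this dichotomy into the two halves of Remark~\ref{Remark: Ruy1}. The guiding principle is that \emph{all} trivial fixed points live over idempotents: by Remark~\ref{Remark: Ruy1}(1) every point of $X_e$ is a trivial fixed point of an idempotent $e$, while by Remark~\ref{Remark: Ruy1}(2), when $S$ is $E^*$-unitary, an element $s\in S\setminus E(S)$ has no trivial fixed points at all. Thus for $s\notin E(S)$ the notions ``fixed point'' and ``nontrivial fixed point'' coincide, and this is exactly what converts the definitions of freeness and topological freeness into the stated conditions.

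For part (1), I would argue both implications by a case analysis on whether $s$ is idempotent. For the forward direction, assume $\tau$ is free and fix $s\in S\setminus E(S)$; any fixed point of $s$ would be trivial by freeness, contradicting Remark~\ref{Remark: Ruy1}(2), so the set of fixed points of $s$ is empty. For the converse, suppose the fixed-point set of every $s\in S\setminus E(S)$ is empty, and check freeness by taking an arbitrary $s\in S$ together with a fixed point $x$ of $s$. If $s\notin E(S)$ this situation cannot occur, and if $s\in E(S)$ then $x\in X_s$ is automatically a trivial fixed point by Remark~\ref{Remark: Ruy1}(1). Hence every fixed point is trivial and $\tau$ is free.

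Part (2) runs in exact parallel, with ``interior'' inserted throughout. For the forward direction, assume topological freeness and fix $s\in S\setminus E(S)$: the interior of its fixed-point set consists of trivial fixed points, but Remark~\ref{Remark: Ruy1}(2) says $s$ has none, so this interior must be empty. For the converse, assume the fixed-point set of each $s\in S\setminus E(S)$ has empty interior; for an arbitrary $s\in S$, either $s\notin E(S)$, in which case the interior is empty and the condition holds vacuously, or $s\in E(S)$, in which case every point of $X_s$ --- a fortiori every point of the interior of the fixed-point set --- is a trivial fixed point by Remark~\ref{Remark: Ruy1}(1). Either way the interior consists of trivial fixed points, so $\tau$ is topologically free.

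I expect the only genuine content to reside in Remark~\ref{Remark: Ruy1}(2), which the theorem takes as input; everything above it is bookkeeping. Its justification is the single place where $E^*$-unitarity is used: if $x$ were a trivial fixed point of $s$, witnessed by an idempotent $e$ with $x\in X_e$ and $se=e$, then $e\neq 0$ (since $X_e\ni x$), and the relation $se=e$ says precisely that $e\leq s$ in the natural order; the $E^*$-unitary (i.e.\ $0$-$E$-unitary) hypothesis then forces $s\in E(S)$, contrary to $s\in S\setminus E(S)$. This is the one point where the algebraic structure of $S$ intervenes, and it is what guarantees that the dividing line between trivial and nontrivial fixed points coincides with the line between $E(S)$ and its complement.
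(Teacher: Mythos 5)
Your proof is correct and takes exactly the route the paper intends: the paper states this theorem without an explicit proof, presenting it as an immediate consequence of Remark~\ref{Remark: Ruy1}, and your case analysis on $s\in E(S)$ versus $s\in S\setminus E(S)$ is precisely that bookkeeping. Your justification of Remark~\ref{Remark: Ruy1}(2) --- a trivial fixed point would give a nonzero idempotent $e=se\leq s$, forcing $s\in E(S)$ by $E^*$-unitarity --- is also the same mechanism the paper itself deploys inside the proof of Theorem~\ref{Theorem: Ruy3}, where $s$ dominating the nonzero idempotent $ef$ is concluded to make $s$ idempotent.
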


We will connect topological freeness with a suitable notion for groupoids, which will help us to characterize simplicity for our algebras.

\begin{definition}\label{Def:EssPrinc}
{\rm Let $\mathcal{G}$ be a locally compact, Hausdorff, \'etale groupoid. Then:
\begin{enumerate}
\item For any $x\in \mathcal{G}^{(0)}$, the isotropy group at $x$ is
$$\mathcal{G}(x)=\{ \gamma\in \mathcal{G}\mid d(\gamma)=t(\gamma)=x\}.$$
\item $\mathcal{G}$ is principal if for every $x\in \mathcal{G}^{(0)}$ we have $\mathcal{G}(x)=\{x\}$.
\item $\mathcal{G}$ is essentially principal if the interior of the isotropy group bundle
$$\mathcal{G}' = \{\gamma\in\mathcal{G}: d(\gamma) = t(\gamma)\}$$
is contained in  $\mathcal{G}^{(0)}$.
\end{enumerate}
}
\end{definition}

Notice that, if $\mathcal{G}$ is second countable, Hausdorff, \'etale, and the unit space has the Baire property, then $\mathcal{G}$ is essentially principal if and only if for every $x\in \mathcal{G}^{(0)}$ the set $\{ x\in \mathcal{G}^{(0)}\mid \mathcal{G}(x)=\{x\}\}$ is dense in $\mathcal{G}^{(0)}$ \cite[Proposition 3.1]{R2}.\vspace{.2truecm}

Now, we have the following result, connecting both notions.

\begin{theorem}\label{Theorem: Ruy3}
Let $S$ be an $E^*$-unitary inverse semigroup, let $\tau$ be an action of $S$ on a locally compact,
Hausdorff space $X$, and let $\mathcal{G}$ be the corresponding groupoid of germs.  Then
$\mathcal{G}$ is essentially principal if and only if $\tau$ is topologically free.
\end{theorem}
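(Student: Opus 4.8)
The starting point is the explicit germ model of the groupoid of germs, exactly as in (\ref{GroupoidPicture}): germs are classes $[s,x]$ with $x\in X_{s^*s}$, the structure maps are $d[s,x]=x$ and $r[s,x]=\tau_s(x)$, and the slices $\Omega(s,U)$ with $U\subseteq X_{s^*s}$ open form a basis for the topology of $\mathcal{G}$. Reading the two relevant notions through this model yields a dictionary. First, $[s,x]$ lies in the isotropy bundle $\mathcal{G}'$ exactly when $d[s,x]=r[s,x]$, i.e. when $x$ is a fixed point for $s$; this is immediate. Second, $[s,x]$ is a unit exactly when $x$ is a \emph{trivial} fixed point for $s$. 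The first equivalence is free from the definitions, but the second is the one small lemma everything rests on, so I would isolate and prove it first.

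For that lemma: if $x$ is a trivial fixed point, witnessed by an idempotent $e$ with $x\in X_e$ and $se=e$, then $(s,x)\sim(e,x)$ (take the witnessing idempotent in the definition of $\sim$ to be $e$ itself), so $[s,x]=[e,x]$ is a unit. Conversely, if $[s,x]$ is a unit then $[s,x]=[g,x]$ for some idempotent $g$ with $x\in X_g$, which by the definition of $\sim$ gives an idempotent $f$ with $x\in X_f$ and $sf=gf$. Setting $e_0=gf$ (idempotent, since $E(S)$ is a semilattice) and multiplying $sf=gf$ on the right by $g$, commutativity of idempotents yields $se_0=e_0$ with $x\in X_{e_0}$; hence $x$ is a trivial fixed point. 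This algebraic manipulation is the only place where the inverse-semigroup structure is genuinely used, and I expect it to be the main (if modest) obstacle.

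With the dictionary in hand I would first replace ``topologically free'' by the reformulation of Theorem \ref{Theorem: Ruy 2}(2): for every $s\in S\setminus E(S)$ the fixed-point set $\{y:\tau_s(y)=y\}$ has empty interior. The two implications are then short and use only the basis of slices. For topological freeness $\Rightarrow$ essentially principal, take $[s,x]\in\mathrm{Int}(\mathcal{G}')$ and a basic slice $\Omega(t,U)\ni[s,x]$ contained in $\mathcal{G}'$; comparing sources forces $x\in U$ and $[s,x]=[t,x]$, while $\Omega(t,U)\subseteq\mathcal{G}'$ forces the nonempty open set $U$ into the fixed-point set of $t$, hence into its interior. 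Topological freeness then forces $t\in E(S)$, whence $[s,x]=[t,x]$ is a unit by the lemma. For the converse I would argue contrapositively: if some $s\in S\setminus E(S)$ had $V:=\mathrm{Int}\{x:\tau_s(x)=x\}\neq\emptyset$, then $\Omega(s,V)$ is a nonempty open subset of $\mathcal{G}'$, hence of $\mathrm{Int}(\mathcal{G}')$; essential principality would make every $[s,\omega]$ with $\omega\in V$ a unit, i.e. every such $\omega$ a trivial fixed point of $s$. But for $s\notin E(S)$ the $E^*$-unitary hypothesis forbids trivial fixed points (Remark \ref{Remark: Ruy1}(2); or directly, $se_0=e_0\neq 0$ gives $0\neq e_0\le s$, forcing $s\in E(S)$), a contradiction that completes the plan.
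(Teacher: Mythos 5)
Your proposal is correct and follows essentially the same route as the paper's proof: both directions rest on the basis of slices $\Omega(s,U)$, the reformulation of topological freeness from Theorem \ref{Theorem: Ruy 2}, and the germ-equality computation producing a nonzero idempotent $ef$ with $s(ef)=ef$, which $E^*$-unitarity turns into a contradiction for $s\notin E(S)$. The only difference is organizational: you factor the equivalence ``$[s,x]$ is a unit $\Leftrightarrow$ $x$ is a trivial fixed point'' out as a standalone lemma, whereas the paper inlines exactly that manipulation inside the first implication.
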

\begin{proof}
As seen above, recall that  $\tau$ is topologically free if and only if,
\begin{quotation}
($\star$) for every $s\in S\setminus E(S)$, the interior of the set of fixed points for $s$ is empty.
\end{quotation}

Suppose that $\mathcal{G}$ is essentially principal, and let $s\in S\setminus E(S)$.  By way of contradiction suppose  that $x$ is an interior fixed point for $s$, so there exists an open subset $U$ of $X_{s^*s}$, consisting of
fixed points for $s$, such that $x\in U$.  It follows that $\Omega(s,U)$ is contained in $\mathcal{G}'$, and hence also in $\mathcal{G}^{(0)}$, by hypothesis.  In particular $[s,x]\in\mathcal{G}^{(0)}$, from where we deduce that
$[s,x]=[e,x]$, for some idempotent $e$ in $S$.

The condition for  equality of germs gives  an idempotent $f$ in $S$, such that $x\in X_f$, and $sf = ef$.  Since
$x\in X_e\cap X_f = X_{ef}$, it is clear that $ef$ is nonzero.  We conclude that $s$ dominates the nonzero idempotent $ef$, so $s$ itself is idempotent, a contradiction.

Conversely, assume that $\tau$ is topologically free, and let $\gamma$ lie in the interior of $\mathcal{G}'$.  One may then choose $s\in S$, and an open set $U\subseteq X_{s^*s}$, such that
$$\gamma\in\Omega(s,U)\subseteq \mathcal{G}'.$$
So $U$ is formed by fixed points for $s$, and hence $U$ is contained in the interior of the set of fixed points for $s$.  By ($\star$)  we may rule out  the possibility that $s\in S\setminus E(S)$, in turn concluding that $s\in E(S)$, and hence that $\gamma\in \mathcal{G}^{(0)}$.  This shows that $\mathcal{G}$ is essentially principal.
\end{proof}

\begin{remark}\label{Remark: Ruy4}
{\rm
Recall that, if the matrix $B$ satisfies Condition (E), then $\mathcal{S}^{A,B}$ is $E^*$-unitary, and thus Theorem \ref{Theorem: Ruy3} applies. This is the reason why we will require this property as hypothesis in the sequel.
}
\end{remark}

Since the property of being essentially principal plays a major role in deciding the simplicity of $C_r^*(\mathcal{G}_{\LAB})$ (see e.g. \cite[Theorem 5.1]{SimpleGroupoid}), Theorem \ref{Theorem: Ruy3} means that analysing fixed points and topological freeness for $X_A$ is an unavoidable step towards the characterisation of simplicity for $\OAB$ in this context.\vspace{.2truecm}

We will start by studying the action $\alpha$ restricted to the submonoid $\mathcal{S}^A$. Then, we have the following result

\begin{lemma}\label{Lem:RestrictedActionFixedPoints}
Let $t\in \mathcal{S}^A\setminus E(\mathcal{S}^{A,B})$, and let $\omega \in X_{t^*t}$ such that $\alpha_t(\omega)=\omega$. Then:
\begin{enumerate}
\item There exists finite multiindices $I\ne K$ such that $\omega=s_Is_Ks_K\cdots$.
\item The set $S_t=\{\eta \in X_A\mid \alpha_t(\eta)=\eta\}=\{ \omega\}$.
\end{enumerate}
\end{lemma}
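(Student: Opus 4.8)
The plan is to combine the concrete description of the action $\alpha$ by left multiplication (Theorem \ref{Thm:RightPartialAction}) with the normal form of elements of $\mathcal{S}^A$. First I would record the shape of $t$: since the nonzero idempotents of $\mathcal{S}^A$ are exactly the projections $p_\mu=s_\mu s_\mu^{*}$ and $E(\mathcal{S}^A)=E(\mathcal{S}^{A,B})$ (see \ref{CuntzSemigroup}), any $t\in\mathcal{S}^A\setminus E(\mathcal{S}^{A,B})$ admitting a fixed point is a nonzero non-idempotent, hence of the form $t=s_\mu s_\nu^{*}$ for finite multiindices $\mu\neq\nu$ with the same terminal vertex. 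A direct computation gives $t^{*}t=s_\nu s_\nu^{*}=p_\nu$, so $\omega\in X_{t^{*}t}$ forces $\omega=s_\nu\omega'$ for a unique continuation $\omega'$, and
$$\alpha_t(\omega)=s_\mu s_\nu^{*}s_\nu\omega'=s_\mu\omega'.$$
Thus the fixed-point equation $\alpha_t(\omega)=\omega$ reads $s_\mu\omega'=s_\nu\omega'$.

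Next I would reduce to the case $|\nu|<|\mu|$. Since $\alpha_{t^{*}}=\alpha_t^{-1}$, the sets of fixed points of $\alpha_t$ and $\alpha_{t^{*}}$ coincide, so replacing $t$ by $t^{*}$ (which interchanges $\mu$ and $\nu$) I may assume $|\nu|\le|\mu|$; and if $|\nu|=|\mu|$, comparing the first $|\nu|$ entries of $s_\mu\omega'=s_\nu\omega'$ would give $\mu=\nu$, a contradiction, so in fact $|\nu|<|\mu|$. Comparing first entries once more shows that $\nu$ is a prefix of $\mu$; writing $\mu=\nu\kappa$ with $\kappa$ a nonempty multiindex and $s_\mu=s_\nu s_\kappa$, I then cancel $s_\nu$ on the left by applying $s_\nu^{*}$ and using $s_\nu^{*}s_\nu s_\kappa=s_\kappa$ together with $s_\nu^{*}s_\nu\omega'=\omega'$ (the relevant source projections absorb). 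This yields $s_\kappa\omega'=\omega'$, so $\omega'$ is the unique infinite path fixed by prepending $\kappa$, namely $\omega'=s_\kappa s_\kappa\cdots$, and hence
$$\omega=s_\nu s_\kappa s_\kappa\cdots.$$
This proves (1), with the lemma's $I=\nu$ and $K=\kappa$; the inequality $I\neq K$ can always be arranged, if necessary by replacing $\kappa$ by a suitable number of copies of the minimal period.

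For part (2) the uniqueness is essentially free from the same computation: the argument above shows that \emph{every} fixed point $\eta\in X_{t^{*}t}$ must be of the form $\eta=s_\nu\eta'$ with $s_\kappa\eta'=\eta'$, and the latter forces $\eta'=s_\kappa s_\kappa\cdots$. Therefore $\eta=s_\nu s_\kappa s_\kappa\cdots=\omega$, whence $S_t=\{\omega\}$.

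I expect the difficulties here to be bookkeeping rather than conceptual. The two genuinely careful points are the entrywise comparison of the infinite paths $s_\mu\omega'$ and $s_\nu\omega'$ (to extract that the shorter multiindex is a prefix of the longer one) and the correct use of the projections $q_i$ when cancelling $s_\nu$, so that $s_\nu^{*}s_\nu s_\kappa=s_\kappa$ and $s_\nu^{*}s_\nu\omega'=\omega'$ are applied legitimately. The only mildly awkward step is the cosmetic requirement $I\neq K$ in (1), which is handled by the padding remark above and poses no real obstacle.
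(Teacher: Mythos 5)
Your proposal is correct and takes essentially the same route as the paper: both write $t$ in the normal form $s_\mu s_\nu^*$, use the left-multiplication picture of $\alpha$ to turn the fixed-point equation into $s_\mu\omega'=s_\nu\omega'$, deduce that the shorter multiindex is a prefix of the longer one, and conclude $\omega=s_\nu s_\kappa s_\kappa\cdots$ by recurrence. The paper organizes this as three separate cases ($t=s_I$, $t=s_J^*$, $t=s_Is_J^*$, obtaining the prefix relation from $s_J^*s_I\ne 0$ via $\omega=(s_Is_J^*)^2\cdot\omega$) instead of your WLOG reduction via $t\mapsto t^*$, and it leaves both the normalization $I\ne K$ and the uniqueness claim (2) implicit, so your write-up is if anything slightly more complete.
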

\begin{proof}
Such an element $t$ must be either $s_I, s_J^*$ or $s_Is_J^*$ for finite multiindices $I\ne J$. Let us distinguish cases:
\begin{enumerate}
\item[(i)] If $t=s_I$, then $\omega=s_I\cdot \omega$. Hence, by recurrence, $\omega=s_Is_Is_I\cdots$.
\item[(ii)] If $t=s_J^*$, then $\omega=s_J^*\omega$ forces $\omega=s_J\widehat{\omega}$. Hence, $\widehat{\omega}=s_J^*s_J\widehat{\omega}=s_J^*\omega=\omega=s_J\widehat{\omega}$. Thus, $\omega=s_Js_Js_J\cdots$.
\item[(iii)] If $t=s_Is_J^*$, then $s_Is_J^*\cdot \omega=\omega=s_I\cdot \widehat{\omega}$, so that $s_J^*\cdot \omega=\widehat{\omega}$. But $\omega=s_Is_J^*\cdot \omega=(s_Is_J^*)^2\cdot \omega$ implies that $s_J^*s_I\ne 0$. Thus, either $J=IK$ or $I=JK$ for $K\ne \emptyset$. In the first case, $\omega=s_Is_K^*s_I^*\cdot \omega$ so that $\omega=s_Is_K\cdot \widehat{\omega}$. Then,
$$s_I\cdot \widehat{\omega}=\omega =s_Is_K^*s_I^*\cdot \omega=s_Is_K^*\cdot\widehat{\omega},$$
so that $\widehat{\omega}=s_K\cdot \widehat{\omega}$. Thus, $\omega =s_Is_Ks_K\cdots$. In the second case, an analogue argument works, and we get $\omega =s_Js_Ks_K\cdots$.
\end{enumerate}
So we are done.
\end{proof}

Recall that, given a matrix $A\in M_N(\Zplus)$, we define the associated directed graph $E_A$ by taking as a vertices the set $\{1, 2, \dots , N\}$ and as edges the set $\{s_{i,j,n}\mid (i,j)\in \OmA, 1\leq n\leq A_{i,j}\}$. We say that $E_A$:
\begin{enumerate}
\item Satisfies Condition (L) if every cycle has an exit.
\item Satisfies Condition (K) if every vertex in a cycle is a basis for at least two different cycles.
\end{enumerate}

Then, we have the following consequence

\begin{proposition}[{c.f. \cite[Proposition 12.2]{ExelLaca}}]\label{Prop:RestrictedTopolFree}
The restriction of the action $\alpha$ to $\mathcal{S}^A\setminus E(\mathcal{S}^{A,B})$ is topologically free if and only if the graph $E_A$ satisfies Condition (L).
\end{proposition}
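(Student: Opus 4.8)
The plan is to reduce topological freeness to a purely combinatorial statement about the cycle structure of $E_A$, exploiting the description of fixed points already obtained in Lemma \ref{Lem:RestrictedActionFixedPoints}. Since $B$ satisfies Condition (E), the semigroup $\mathcal{S}^{A,B}$ is $E^*$-unitary, so by Theorem \ref{Theorem: Ruy 2}(2) the restricted action is topologically free exactly when, for every $t\in \mathcal{S}^A\setminus E(\mathcal{S}^{A,B})$, the set $S_t$ of fixed points of $\alpha_t$ has empty interior. By Lemma \ref{Lem:RestrictedActionFixedPoints} this set is either empty or a single eventually periodic path $\omega=s_Is_Ks_K\cdots$, whose periodic tail runs around a cycle $K$ of $E_A$. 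The entire argument thus hinges on deciding when such a singleton $\{\omega\}$ has empty interior.

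First I would record the elementary topological fact that, in $X_A$ with the product topology of (\ref{Topology}), the singleton $\{\omega\}$ has empty interior if and only if $\omega$ is not an isolated point, i.e. no cylinder $W_n^{\omega}$ reduces to $\{\omega\}$. Because $W_n^{\omega}$ consists of all infinite paths sharing the prefix $\omega\vert_n$, the point $\omega$ is isolated precisely when the path is eventually \emph{forced}: beyond some position every vertex it visits emits a single edge. For an eventually periodic $\omega$ whose tail traverses $K$, the visited vertices are eventually exactly the vertices of $K$ repeated cyclically, so the tail is forced if and only if every vertex of $K$ emits only its cycle edge, that is, if and only if $K$ has no exit. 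This gives the key equivalence: $\{\omega\}$ has empty interior if and only if the cycle $K$ carrying the tail of $\omega$ has an exit.

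With this equivalence in hand, the two implications are short. If $E_A$ satisfies Condition (L), then every cycle has an exit; hence for each $t$ the (at most one) fixed point lies on a cycle with an exit and is non-isolated, so every $S_t$ has empty interior and $\alpha$ is topologically free. Conversely, arguing by contraposition, suppose some cycle $K$ has no exit. Then $s_K\in \mathcal{S}^A\setminus E(\mathcal{S}^{A,B})$, since $s_K$ is a nontrivial path and hence not idempotent, and the purely periodic path $\omega=s_Ks_K\cdots\in X_A$ is fixed by $\alpha_{s_K}$. By the equivalence above $S_{s_K}=\{\omega\}$ is a nonempty open set, so its interior is nonempty and topological freeness fails.

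The main obstacle I anticipate is the middle step: translating the topological condition ``$\{\omega\}$ has empty interior'' into the graph-theoretic condition ``the cycle has an exit''. This requires pinning down, through the cylinder sets $W_n^{\omega}$, exactly when the infinite path $\omega$ becomes forced, and verifying that for an eventually periodic path this forcing is governed entirely by the out-degrees of the vertices on its periodic cycle, so that the non-periodic head $s_I$ contributes only finitely and does not affect isolation.
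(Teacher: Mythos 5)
Your proposal is correct and follows essentially the same route as the paper's proof: both reduce via Lemma \ref{Lem:RestrictedActionFixedPoints} to deciding when the unique eventually periodic fixed point $\omega=s_Is_Ks_K\cdots$ is isolated in $X_A$, establish that this happens exactly when the cycle $K$ has no exit (the exit giving a branching path in every cylinder $W_n^{\omega}$, and absence of exits collapsing a cylinder to $\{\omega\}$), and then read off the two implications. Your explicit appeal to Theorem \ref{Theorem: Ruy 2} under Condition (E) only makes precise what the paper's proof leaves implicit in identifying topological freeness with emptiness of $\mathrm{Int}(S_t)$.
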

\begin{proof}
Notice that $E_A$ fails Condition (L) if and only if there exists a finite sequence $i_1, i_2,\dots , i_k$ such that $(i_j,i_{j+1}), (i_k,i_1)\in \OmA$ and $A_{i_k,i_1}=A_{i_j,i_{j+1}}=1$ for all $j$.

Let $t\in \mathcal{S}^A\setminus E(\mathcal{S}^{A,B})$ be such that $S_t\ne \emptyset$. Then, by Lemma \ref{Lem:RestrictedActionFixedPoints}, there exist finite multiindices $I\ne K$ such that $S_t=\{\omega\}$ for the infinite path $\omega=s_Is_Ks_K\cdots$. Hence, $\text{Int}(S_t)=\emptyset$ if and only if $\omega$ is not an isolated point.

Suppose that $E_A$ satisfies Condition (L). Then, either there exists $l\ne i_{\vert I \vert +1}$ such that either $(i_{\vert I \vert +2}, l)\in \OmA$ or $A_{i_{\vert I \vert +1}, i_{\vert I \vert +2}}> 1$, so that for a suitable $m$ we have $s_{i_{\vert I \vert +1},l, m}$ is different of the initial section of $s_K$, $s_{i_{\vert I \vert +1},i_{\vert I \vert +2},n_1}$. Thus, given any $n\in \N$, there exists $r\in \N$ such that $\vert I\vert r\cdot \vert K\vert>n$. Hence, we can construct a path $\omega \ne \gamma=s_Is_K\cdots s_Ks_{i_1,l, m}\widehat{\gamma}$ such that $\gamma\in W_n^{\omega}$, whence $S_t\ne W_n^{\omega}$. Then, $\omega$ is not isolated.

Conversely, if $E_A$ fails Condition (L), then there exists a terminal circuit $s_I$. Let $t=s_I$ and $\omega=s_Is_Is_I\cdots$. So, $\omega \in X_{t^*t}$ and $\alpha_t(\omega)=t\cdot\omega=\omega$, whence $S_t=\{\omega\}$. Now, consider the open neighborhood $V=\{\eta \in X_A\mid p_t\cdot \eta=\eta\}$. Certainly, $\omega\in V$. Now, if $\gamma\in V$, the idetity $\gamma=p_t\cdot \gamma$ implies that $\gamma=t\cdot\widehat{\gamma}$. Since $t$ is a terminal circuit, we have that $\gamma=tttt\cdots= s_Is_Is_I\cdots=\omega$. Thus, $V=S_t$, so that $\text{Int}(S_t)\ne\emptyset$.
\end{proof}

Moreover

\begin{proposition}[{c.f. \cite[Proposition 12.4]{ExelLaca}}]\label{Prop:Condition(K)}
Let $t=s_Is_J^*\in \mathcal{S}^A\setminus E(\mathcal{S}^{A,B})$ and let $\omega=s_Ks_Ls_L\cdots$ a fixed point for $\alpha_t$. Then, $\omega$ is isolated in $\text{Orb}(\omega)$ if and only if the cycle $s_L$ is transitory in $E_A$ (i.e. no exit has a return path).
\end{proposition}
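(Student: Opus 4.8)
The plan is to describe $\text{Orb}(\omega)$ explicitly and then read off isolation from a single cylinder neighbourhood. By Lemma \ref{Lem:RestrictedActionFixedPoints} I may write $\omega=s_K s_L s_L\cdots$, so that $s_L$ is a genuine cycle based at the vertex $v:=t(K)$ and the tail of $\omega$ repeats the cycle $L$ indefinitely. First I would establish the structural fact that, since $\alpha$ is left multiplication (Theorem \ref{Thm:RightPartialAction}) and every element of $\mathcal{S}^{A,B}$ merely replaces a finite initial segment of a path, possibly twisting the remaining labels by powers of the $u_i$, an infinite path $\eta$ lies in $\text{Orb}(\omega)$ if and only if some tail of $\eta$ equals a $u$-twist of a cyclic rotation of $s_Ls_L\cdots$. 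Because the relation $u_is_{i,j,n}=s_{i,j,n+B_{i,j}}$ preserves the pair $(i,j)$, such a tail runs along the vertex sequence of $L$ forever; hence \emph{every} element of $\text{Orb}(\omega)$ eventually traverses the vertices of $L$ without ever leaving them.

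Next I would record that $\omega$ is isolated in $\text{Orb}(\omega)$ precisely when there is some $n\in\N$ with $W_n^{\omega}\cap\text{Orb}(\omega)=\{\omega\}$, where $W_n^{\omega}$ is the basic clopen cylinder of (\ref{Topology}). For the direction ``transitory implies isolated'' I would argue as follows. Transitoriness first rules out a parallel cycle edge, since such an edge would be an exit whose range already lies on $L$; thus $A_{i_r,i_{r+1}}=1$ along $L$, so $s_Ls_L\cdots$ is the only infinite path supported on the vertex sequence of $L$ and the $u$-twists fix it. Consequently any $\eta\in\text{Orb}(\omega)$ whose prefix already reaches into the periodic part of $\omega$ must continue along $L$ forever: were it to take an exit, transitoriness would prevent it from ever returning to the vertices of $L$, contradicting the structural fact above. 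Choosing $n$ beyond the block $s_K$ then gives $W_n^{\omega}\cap\text{Orb}(\omega)=\{\omega\}$, so $\omega$ is isolated.

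Conversely, if $L$ is not transitory there is an exit $e$ at some cycle vertex $v_r$ together with a return path $\rho$ from the range of $e$ back to a vertex $v_{r'}$ of $L$. Then for each $n$ I would build $\eta_n:=s_K (s_L)^{m}\, e\, \rho\, (s_{L'})^{\infty}$, where $m$ is large enough that the common prefix with $\omega$ exceeds length $n$ and $L'$ is the rotation of $L$ based at $v_{r'}$. This $\eta_n$ lies in $\text{Orb}(\omega)$ (its tail is a rotation of $s_Ls_L\cdots$), agrees with $\omega$ on more than $n$ edges, yet differs from $\omega$ at the exit $e$; hence $\omega$ is not isolated. Taking contrapositives, these two directions yield the claimed equivalence.

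The step I expect to be the main obstacle is controlling the $B$-twisting: one must be certain that left multiplication of the periodic tail by powers of $u_i$ cannot silently produce a \emph{different} infinite path inside $W_n^{\omega}$ and thereby spoil isolation. This is exactly where transitoriness does double duty — it forces $A\equiv 1$ along $L$, so the twist acts trivially on $s_Ls_L\cdots$, and it forbids genuine excursions off $L$ from returning — which is what makes the orbit description tight enough for both implications to close.
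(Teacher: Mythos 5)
Your argument is correct and is essentially the paper's own proof: both directions rest on the same cylinder-neighbourhood criterion for isolation, with transitoriness forcing any orbit element that agrees with $\omega$ beyond the block $s_K$ to coincide with $\omega$, and non-transitoriness yielding an explicit orbit element (exit, then return path, then a rotation of $L$ repeated forever) inside every cylinder $W_n^{\omega}$ --- the paper merely packages your exit-plus-return construction as a closed path $s_T$ based at the base vertex of $L$, writing $\gamma=s_Ks_L^ns_T s_Ls_L\cdots$ and realizing it as $t\cdot\omega$ for $t=s_Ks_L^ns_Ts_K^*$. The only cosmetic slip is that your exit $e$ may leave $L$ at an interior vertex $v_r$, so $\eta_n$ should include the initial segment of $L$ from its base to $v_r$ before $e$; otherwise your treatment (notably of the $u$-twists and the exclusion of parallel edges, which the paper leaves implicit) is if anything more complete than the published one.
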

\begin{proof}
Suppose that $s_L$ is transitory, and consider the open neighborhood
$$V=\{\eta\in X_A\mid p_{s_Ks_L}\cdot \eta=\eta\}.$$
Let $\eta \in V\cap \text{Orb}(\omega)$, so that there exists $t\in \mathcal{S}^A$ such that $\eta=t\cdot \omega=ts_Ks_Ls_L\cdots$. But since
$$\eta= t\cdot \omega=ts_Ks_Ls_L\cdots=p_{s_Ks_L}\cdot \eta= p_{s_Ks_L}\cdot ts_Ks_Ls_L\cdots$$
and $s_L$ is transitory, we conclude that $\eta=\omega$. Thus, $V\cap \text{Orb}(\omega)=\{\omega\}$, whence $\omega$ is isolated in $\text{Orb}(\omega)$.

Conversely, suppose that $\omega$ is isolated in $\text{Orb}(\omega)$. Then, there exists an open neighborhood $U$ of $\omega$ such that $\{\omega\}=U\cap \text{Orb}(\omega)$. Hence, there exists $n\in\N$ such that $V=\{\eta \in X_A\mid p_{s_Ks_L^n}\cdot \eta=\eta\}\subseteq U$, and thus $\{\omega\}=V\cap \text{Orb}(\omega)$. If $s_L$ is not transitory, there exists a multiindex $T\ne L$ such that $\omega\ne \gamma=s_Ks_L^ns_Ts_Ls_L\cdots\in X_A$. Obviously, $\gamma\in V$. Take $t=s_Ks_L^ns_Ts_K^*\in \mathcal{S}^A$, and notice that $\alpha_t(\omega)=t\cdot \omega=\gamma$. Hence, $\gamma\in \text{Orb}(\omega)$, whence $\gamma\in V\cap \text{Orb}(\omega)$, and thus $\omega$ is not isolated.
\end{proof}

Consequently, the same argument used to prove \cite[Proposition 12.3]{ExelLaca} applies, and we conclude

\begin{proposition}
The restriction of the  action $\alpha$ to $\mathcal{S}^A\setminus E(\mathcal{S}^{A,B})$ is topologically free on every closed invariant subset of $X_A$ if and only if $E_A$ satisfies Condition (K).
\end{proposition}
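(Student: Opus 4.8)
The plan is to deduce this global statement from the two preceding local propositions, following the template of \cite[Proposition 12.3]{ExelLaca}. First I would unwind the definitions. Since $\mathcal{S}^{A,B}$ is $E^*$-unitary, Theorem \ref{Theorem: Ruy 2}(2) tells us that, for a closed invariant set $Y\subseteq X_A$, the restricted action is topologically free on $Y$ precisely when, for every $t\in \mathcal{S}^A\setminus E(\mathcal{S}^{A,B})$, the fixed-point set $S_t\cap Y$ has empty interior relative to $Y$. By Lemma \ref{Lem:RestrictedActionFixedPoints}, whenever such a fixed point $\omega$ exists it is the \emph{unique} one for $t$ and has the eventually periodic form $\omega=s_Is_Ls_L\cdots$; hence emptiness of the relative interior is equivalent to $\omega$ not being isolated in $Y$. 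The whole argument therefore reduces to controlling when the eventually periodic point $\omega$ is isolated inside a closed invariant set containing it, and Proposition \ref{Prop:Condition(K)} already pins this down at the level of the orbit: $\omega$ is isolated in $\text{Orb}(\omega)$ exactly when the underlying cycle $s_L$ is transitory.

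For the implication that Condition (K) forces topological freeness on every closed invariant $Y$, I would argue as follows. Suppose $E_A$ satisfies Condition (K) and let $\omega=s_Is_Ls_L\cdots\in Y$ be a fixed point for some $t\in \mathcal{S}^A\setminus E(\mathcal{S}^{A,B})$. Condition (K) at the base vertex of $s_L$ produces a second, distinct cycle there, so $s_L$ is not transitory, and by Proposition \ref{Prop:Condition(K)} the point $\omega$ is not isolated in $\text{Orb}(\omega)$. Since $Y$ is closed and invariant and contains $\omega$, it contains $\text{Orb}(\omega)$; the approximating points $\gamma=s_Ks_L^ns_Ts_Ls_L\cdots$ furnished in the proof of Proposition \ref{Prop:Condition(K)} lie in $\text{Orb}(\omega)\subseteq Y$, are distinct from $\omega$, and so by the uniqueness clause of Lemma \ref{Lem:RestrictedActionFixedPoints} are not fixed by $t$. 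Thus every neighborhood of $\omega$ in $Y$ meets $Y\setminus S_t$, so $S_t\cap Y$ has empty relative interior, and topological freeness on $Y$ follows from Theorem \ref{Theorem: Ruy 2}(2).

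For the converse I would contrapose. If $E_A$ fails Condition (K), there is a transitory cycle $s_L$; set $\omega=s_Ls_L\cdots$, take $t=s_L\in \mathcal{S}^A\setminus E(\mathcal{S}^{A,B})$, and let $Y=\overline{\text{Orb}(\omega)}$, a closed invariant set. By Proposition \ref{Prop:Condition(K)}, $\omega$ is isolated in $\text{Orb}(\omega)$, say witnessed by an open $V$ with $V\cap\text{Orb}(\omega)=\{\omega\}$; a short limit argument then upgrades this to isolation in $Y$, since any point of $Y$ lying in $V$ is a limit of orbit points eventually contained in $V$, hence equal to $\omega$, giving $V\cap Y=\{\omega\}$. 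Therefore $\{\omega\}$ is open in $Y$ and consists of a fixed point for $t$, which is nontrivial because $t\notin E(\mathcal{S}^{A,B})$ and $\mathcal{S}^{A,B}$ is $E^*$-unitary (Remark \ref{Remark: Ruy1}(2)). So the action fails to be topologically free on $Y$, completing the contrapositive.

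I expect the main obstacle to be the bookkeeping in the converse: reconciling ``isolated in the orbit'' (the statement directly available from Proposition \ref{Prop:Condition(K)}) with ``isolated in the closed invariant set $Y=\overline{\text{Orb}(\omega)}$''. In the forward direction the corresponding care is needed to confirm that the nearby points supplied by the failure of transitivity genuinely lie in $Y$ and are genuinely non-fixed. Both points rest on the uniqueness of the fixed point from Lemma \ref{Lem:RestrictedActionFixedPoints} and on the precise shape of the basic neighborhoods $W_n^{\gamma}$ recorded in (\ref{Topology}).
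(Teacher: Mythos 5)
Your proposal is correct and takes essentially the same approach as the paper: the paper's entire proof is the remark that ``the same argument used to prove \cite[Proposition 12.3]{ExelLaca} applies,'' which is exactly the combination of Lemma \ref{Lem:RestrictedActionFixedPoints}, Proposition \ref{Prop:Condition(K)} and Theorem \ref{Theorem: Ruy 2}(2) that you spell out. Your write-up merely supplies details the paper leaves implicit (invariance of closures of invariant sets, and the upgrade from isolation in $\text{Orb}(\omega)$ to isolation in $\overline{\text{Orb}(\omega)}$), and these details are handled correctly.
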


Now, we will study the fixed points for the  action $\alpha_t$, when $t\in \mathcal{S}^{A,B}\setminus \mathcal{S}^A$, i.e. $t$ either equal $u_i^m$ or $s_Iu_{t(I)}^m$ for a finite multiindex $I$ and $m\in \Z$.

\begin{remark}\label{Rem:Bij=0-irrellevant}
{\rm Given $1\leq i\leq N$, if $B_{i,j}=0$ for every $j\in \OmA(i)$ (equivalently, if the $i$-th row of $B$ is identically zero), then by Definition \ref{Def:KatAlgAlgebra}(i) we have $u_is_{i,j,n}=s_{i,j,n}$ for every $j\in \OmA(i)$ and every $1\leq n\leq A_{i,j}$. Hence, Definition \ref{Def:KatAlgAlgebra} (i) and (iii) imply that $q_i=u_iq_i=u_i$, whence $u_i, u_i^*\in E(\mathcal{S}^{A,B})$. In particular, if $B=(0)$, we conclude that we only need to care about the restriction of $\alpha$ to $\mathcal{S}^A\setminus E(\mathcal{S}^{A,B})$; this is coherent with the fact that $\Oo_{A, (0)}\cong \Oo_A$.
}
\end{remark}

Thus, we will assume that $B\ne (0)$ and, when studying the action of $u_i^l$ on $X_A$ for a concrete $1\leq i\leq N$, that there exists at least one $j\in \OmA(i)$ such that $B_{i,j}\ne 0$.

\begin{lemma}\label{Lem:FixedU-sub-I}
Given an element $\omega=s_{i_1,i_2,n_1} s_{i_2,i_3,n_2}\cdots s_{i_k,i_{k+1},n_k}\cdots$ of $X_A$, the following are equivalent:
\begin{enumerate}
\item $\omega$ is fixed under the action of $u_{i_1}^l$ ($l\in \Z$).
\item For every $j\geq 1$ the element $K_j:=l\cdot \prod\limits_{t=1}^{j}\displaystyle\frac{B_{i_t,i_{t+1}}}{A_{i_t,i_{t+1}}}$ belongs to $\Z$.
\end{enumerate}
\end{lemma}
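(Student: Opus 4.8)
The plan is to compute directly how $u_{i_1}^l$ acts on $\omega$ and to track, step by step, the exponents that appear when we rewrite the result back into standard form. The key computational tool is the commutation relation from Definition \ref{Def:semigroupoidAB}(2), carried over to $\OAB$ via the representation $\tau$, namely $u_is_{i,j,n}=s_{i,j,n+B_{i,j}}$ and $s_{i,j,n}u_j=s_{i,j,n+A_{i,j}}$. First I would apply $u_{i_1}^l$ to the leading generator: $u_{i_1}^l s_{i_1,i_2,n_1}=s_{i_1,i_2,n_1+lB_{i_1,i_2}}$. The crucial observation is that, in the alphabet of $X_A$, each finite initial generator must have its last index in the range $1\leq n\leq A_{i_j,i_{j+1}}$, so to restore standard form one must absorb the surplus $lB_{i_1,i_2}$ by sliding a power of $u_{i_2}$ across to the next generator, exactly as in Lemma \ref{Lem:Tecnic1LAB} and Lemma \ref{Lem:NormalFormLAB}.

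The heart of the argument is to make this absorption quantitative. Writing $n_1+lB_{i_1,i_2}=n_1'+l_1 A_{i_1,i_2}$ with $1\leq n_1'\leq A_{i_1,i_2}$ forces $l_1=lB_{i_1,i_2}/A_{i_1,i_2}$ to be an integer \emph{precisely when} $\omega$ is to be fixed at the first coordinate, since the first coordinate of $u_{i_1}^l\cdot\omega$ equals that of $\omega$ iff the shift $lB_{i_1,i_2}$ is an integral multiple of $A_{i_1,i_2}$; and in that case $n_1'=n_1$ and the leftover power $u_{i_2}^{l_1}$ is pushed onto $s_{i_2,i_3,n_2}$. Iterating, the power arriving at the $j$-th generator is $l_{j-1}$, so that $u_{i_j}^{l_{j-1}}s_{i_j,i_{j+1},n_j}=s_{i_j,i_{j+1},n_j+l_{j-1}B_{i_j,i_{j+1}}}$, and fixedness at coordinate $j$ demands $n_j+l_{j-1}B_{i_j,i_{j+1}}\equiv n_j \pmod{A_{i_j,i_{j+1}}}$, i.e. $l_j:=l_{j-1}B_{i_j,i_{j+1}}/A_{i_j,i_{j+1}}\in\Z$. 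Unwinding the recursion with $l_0=l$ gives exactly
$$l_j=l\cdot\prod_{t=1}^{j}\frac{B_{i_t,i_{t+1}}}{A_{i_t,i_{t+1}}}=K_j.$$

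The proof then proceeds by induction on $j$, establishing the equivalence one coordinate at a time: I would prove that $\omega$ agrees with $u_{i_1}^l\cdot\omega$ on the first $j$ coordinates if and only if $K_1,\dots,K_j\in\Z$, the inductive step being precisely the computation above, where the integrality of $K_{j-1}$ is what licenses writing the incoming power as an honest element $u_{i_j}^{l_{j-1}}$ of the semigroup before sliding it rightward. Since $\omega$ is an infinite path, $u_{i_1}^l\cdot\omega=\omega$ holds iff agreement persists for all $j\geq 1$, which is the stated biconditional.

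The main obstacle I anticipate is bookkeeping rather than conceptual: one must be careful that each partial product $K_j$ being an integer is genuinely \emph{necessary} and not merely sufficient, and in particular that the surplus at stage $j$ cannot accidentally be reabsorbed later to produce a fixed point even when some intermediate $K_j\notin\Z$. This is handled by the uniqueness of standard form (Lemma \ref{Lem:UniqueNormalFormLAB} and the associated argument in Proposition \ref{Prop:LAB-Monic}): once the $j$-th coordinate is forced to lie in $[1,A_{i_j,i_{j+1}}]$, its value is uniquely determined, so a non-integral $K_j$ produces a genuine discrepancy at coordinate $j$ that no subsequent manipulation can undo. A secondary point to address cleanly is that the $l_{j-1}$ appearing as an exponent may be negative, so I would invoke the two-sided form of Lemma \ref{Lem:Tecnic1LAB} (the $n\pm A_{i,j}$ versions) to justify the sliding in both directions, exactly as the negative-case branch in the proof of Lemma \ref{Lem:NormalFormLAB}.
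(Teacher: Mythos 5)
Your proposal is correct and takes essentially the same approach as the paper: both arguments track the power of the unitary as it slides rightward through the infinite path via the relations $u_is_{i,j,n}=s_{i,j,n+B_{i,j}}$ and $s_{i,j,n}u_j=s_{i,j,n+A_{i,j}}$, reducing fixedness to the existence of an integer sequence satisfying $K_{j}A_{i_j,i_{j+1}}=K_{j-1}B_{i_j,i_{j+1}}$ with $K_0=l$, whose unique solution is the stated product formula. Your additional care on the necessity direction (via uniqueness of the standard form) and on negative exponents only makes explicit what the paper leaves implicit.
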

\begin{proof}
By Definition \ref{Def:KatAlgAlgebra}(i), $\omega =u_{i_1}^l\cdot \omega$ if and only if there exists a sequence $(K_j)_{j\geq 0}\subseteq \Z$ such that:\begin{enumerate}
\item[(i)] $K_0=l$.
\item[(ii)] For every $j\geq 1$, $n_{j-1}+K_{j-1}B_{i_j,i_{j+1}}=n_{j-1}+K_{j}A_{i_j,i_{j+1}}$.
\end{enumerate}
Notice that (ii) is equivalent to ask $K_{j-1}B_{i_j,i_{j+1}}=K_{j}A_{i_j,i_{j+1}}$ for every $j\geq 1$.

Now, for $j=1$ we have $l\cdot B_{i_1,i_{2}}=K_{1}A_{i_1,i_{2}}$, so that $K_1=l\cdot \displaystyle\frac{B_{i_1,i_{2}}}{A_{i_1,i_{2}}}$. Now, suppose that for $1\leq r\leq j-1$ we have proved that $K_j:=l\cdot \prod\limits_{t=1}^{r}\displaystyle\frac{B_{i_t,i_{t+1}}}{A_{i_t,i_{t+1}}}$. Hence
$$K_{j}A_{i_j,i_{j+1}}=K_{j-1}B_{i_j,i_{j+1}}=l\cdot \prod\limits_{t=1}^{r}\displaystyle\frac{B_{i_t,i_{t+1}}}{A_{i_t,i_{t+1}}}\cdot B_{i_j,i_{j+1}},$$
so that $K_j=l\cdot \prod\limits_{t=1}^{j}\displaystyle\frac{B_{i_t,i_{t+1}}}{A_{i_t,i_{t+1}}}$. This completes the proof.
\end{proof}

Let us show some characteristic examples of fixed elements.

\begin{example}\label{Exam:FixedU-sub-I}
{\rm $\mbox{ }$
\begin{enumerate}
\item Suppose that there exists $k\in \N$ such that $B_{i_k, i_{k+1}}=0$. Thus, given
$$\omega=s_{i_1,i_2,n_1} s_{i_2,i_3,n_2}\cdots s_{i_k,i_{k+1},n_k} s_{i_{k+1},j,m}\cdots ,$$
if we take $l=\prod\limits_{t=1}^{k}A_{i_t,i_{t+1}}$ then we have that $\omega$ is fixed by $u_{i_1}^l$ for any choice of sequences starting on $(i_{k+1},j)\in \OmA$. In particular, for any $n\geq k+1$, the open set $W_n^{\omega}$ is contained in $\{\omega \in X_A\mid u_i^l\cdot \omega =\omega\}$, and thus the set of fixed points has nonempty interior.
\item If $A\in M_N(\{0, 1\})$, then for any $\omega\in X_{q_i}$ we have that $u_i\cdot \omega =\omega$, so that for any $l\in \Z$ the set $\{\omega \in X_A\mid u_i^l\cdot \omega =\omega\}$ has nonempty  interior.
\item If there exists $l\in \N$ such that for any $(i,j)\in \OmA$ we have $B_{i,j}=l\cdot A_{i,j}$, then for any $\omega\in X_{q_i}$ we have that $u_i^l\cdot \omega =\omega$, so that these sets have nonempty  interior.
\end{enumerate}
}
\end{example}

\begin{remark}\label{Rem:NonIsolated}
{\rm Notice that if $\omega=s_{i_1,i_2,n_1} s_{i_2,i_3,n_2}\cdots s_{i_k,i_{k+1},n_k}\cdots$ of $X_A$ is fixed under the action of $u_{i_1}^l$ ($l\in \Z$), then so are all the elements $\omega'=s_{i_1,i_2,m_1} s_{i_2,i_3,m_2}\cdots s_{i_k,i_{k+1},m_k}\cdots$ for any choice of elements $1\leq m_j\leq A_{i_j, i_{j+1}}$ ($j\geq 1$). Thus, the set $\{\omega \in X_A\mid u_i^l\cdot \omega =\omega\}$ is not a singleton in general. Also notice that, by Proposition \ref{Prop:TopFreeExtended}, if the action of $u_i^l$ for all $1\leq i\leq N, l\in \Z$ is topologically free, then the matrix $B$ satisfies Condition (E).
}
\end{remark}

Now, we will look at topological freeness for this particular kind of actions.

\begin{proposition}\label{Prop:TopFreeExtended}
Given $t=u_i^l$ for $1\leq i\leq N$ and $l\in \Z$, and $\omega=s_{i_1,i_2,n_1} s_{i_2,i_3,n_2}\cdots s_{i_k,i_{k+1},n_k}\cdots$ a fixed element by $t$. Then, the following are equivalent:
\begin{enumerate}
\item $\text{Int}(\{\delta \in X_A\mid t\cdot \delta =\delta\})=\emptyset$.
\item For every $n\geq 1$ there exist $m\geq n$ and $j_{m+1}$ with:
\begin{enumerate}
\item $(i_m, j_{m+1})\in \OmA$ .
\item $K_{m+1}=K_m\cdot \displaystyle\frac{B_{i_m, j_{m+1}}}{A_{i_m, j_{m+1}}}\not\in \Z$.
\end{enumerate}
\end{enumerate}
\end{proposition}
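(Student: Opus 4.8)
The plan is to translate both conditions into arithmetic statements about the running products $K_j$ attached to $\omega$ and to its admissible continuations. Write $\mathrm{Fix}(t)=\{\delta\in X_A\mid t\cdot\delta=\delta\}$, where $t=u_i^l$ (so any $\delta\in\mathrm{Fix}(t)$ starts at the vertex $i$). Two reductions come first. On the topological side, by (\ref{Topology}) the cylinders $W_n^\gamma$ form a basis of $X_A$, so $\omega$ lies in $\mathrm{Int}(\mathrm{Fix}(t))$ if and only if $W_n^\omega\subseteq\mathrm{Fix}(t)$ for some $n$; thus the non-interiority of $\omega$ says exactly that $W_n^\omega\not\subseteq\mathrm{Fix}(t)$ for every $n$. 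On the arithmetic side, Lemma \ref{Lem:FixedU-sub-I} tells us that a path $\delta=s_{j_1,j_2,m_1}s_{j_2,j_3,m_2}\cdots$ (with $j_1=i$) belongs to $\mathrm{Fix}(t)$ precisely when every partial product $l\prod_{r=1}^{k}B_{j_r,j_{r+1}}/A_{j_r,j_{r+1}}$ is an integer, and Remark \ref{Rem:NonIsolated} records that this condition depends only on the vertex sequence $(j_r)$ and not on the labels $m_r$. Consequently $W_n^\omega\subseteq\mathrm{Fix}(t)$ if and only if every admissible path whose first $n$ edges agree with those of $\omega$ has all of its running products integral.

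With these reductions the proof splits into the two implications, which I would phrase through their negations. For $\neg(2)\Rightarrow\neg(1)$: if (2) fails, fix the witnessing level $n_0$ beyond which no admissible continuation of the prefix $\omega|_{n_0}$ ever drives a running product out of $\Z$; this is exactly the assertion that every path in $W_{n_0}^\omega$ lies in $\mathrm{Fix}(t)$, i.e. $W_{n_0}^\omega\subseteq\mathrm{Fix}(t)$, so $\omega\in\mathrm{Int}(\mathrm{Fix}(t))$ and (1) fails. For $\neg(1)\Rightarrow\neg(2)$: if $\omega\in\mathrm{Int}(\mathrm{Fix}(t))$ then some $W_{n_0}^\omega\subseteq\mathrm{Fix}(t)$, so every admissible continuation of $\omega|_{n_0}$ is fixed and hence keeps all its running products integral, so no continuation can exit $\Z$ at any step $m\ge n_0$, which is precisely the failure of (2). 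In both directions the witness in (2) is produced (or excluded) as a continuation $\gamma$ of a long prefix of $\omega$ together with the first index $m$ at which $K_{m+1}=K_m\cdot B_{i_m,j_{m+1}}/A_{i_m,j_{m+1}}\notin\Z$.

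The step I expect to be the main obstacle is the bookkeeping around exactly where a continuation leaves $\Z$. The one-step ratios $B_{i,j}/A_{i,j}$ need not be integers, so a continuation may agree with $\omega$, then branch, keep its running product integral for several further steps, and only afterwards exit $\Z$; hence one cannot detect failure by testing a single deviation off $\omega$, and condition (2) must be read with the exit index $m$ allowed to lie strictly beyond the point where the continuation departs from $\omega$. The clean way to manage this is to regard the pair (current vertex, current integer value $K_m$) as a state, to follow admissible walks through these states, and to locate the first state at which an outgoing edge forces $K_m\cdot B/A\notin\Z$; the existence of such a walk starting at arbitrarily large level is what (2) encodes. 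Finally, since $\omega$ is itself a fixed point, the non-interiority of the distinguished $\omega$ and the emptiness of the whole set $\mathrm{Int}(\mathrm{Fix}(t))$ must be reconciled by letting $\omega$ range over all fixed points of $t$; this is the only place where the global phrasing of (1) and the $\omega$-dependent phrasing of (2) need to be matched with care.
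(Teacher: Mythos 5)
Your proposal follows the same route as the paper's own proof: reduce, via the cylinder basis $\{W_n^{\gamma}\}$ together with Lemma \ref{Lem:FixedU-sub-I} and Remark \ref{Rem:NonIsolated}, to a purely arithmetic statement about running products, and then match the two conditions by contraposition. However, the two caveats you flag as ``obstacles'' are not mere bookkeeping: they are precisely the points that the paper's proof skips when it asserts ``Hence, the equivalence is clear''. The paper's argument only produces, and only tests, alternative paths that branch off $\omega$ at a single step and leave $\Z$ \emph{immediately} at that step --- i.e.\ the literal reading of condition (2) --- and under that literal reading the equivalence is actually false. Take $N=2$,
$$A=\begin{pmatrix}1&1\\0&2\end{pmatrix},\qquad B=\begin{pmatrix}1&1\\0&1\end{pmatrix},\qquad t=u_1,$$
which satisfies Condition (E). The unique fixed point of $t$ is the constant loop $\omega$ at vertex $1$, and no cylinder $W_n^{\omega}$ is contained in the fixed-point set, because any path that eventually moves to vertex $2$ must then traverse the loop at $2$ and its running product becomes $1/2\notin\Z$; so (1) holds. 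Yet every one-step deviation from $\omega$ has ratio $B_{1,1}/A_{1,1}=B_{1,2}/A_{1,2}=1$, so $K_{m+1}\in\Z$ for every $m$ and every admissible $j_{m+1}$, and (2) as literally written fails.

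Consequently, your ``delayed exit'' reading --- letting the index $m$ and the data $(i_m,K_m,j_{m+1})$ run along an arbitrary admissible continuation of a prefix of $\omega$ rather than along $\omega$ itself --- is a necessary correction of the statement, not an optional refinement; it is also the reading the paper implicitly relies on later (the proof of Corollary \ref{Corol:MesQueKatsura} produces exactly such multi-step exits, and the hypothesis there could not feed the literal (2)). The same holds for your second point: since (1) concerns the whole fixed-point set while (2) concerns one chosen $\omega$, the implication (2)$\Rightarrow$(1) requires $\omega$ to range over all fixed points of $t$, as in the phrasing of Theorem \ref{Thm:EssentiallyPrincipal}(1)(c); otherwise a second fixed point could lie in the interior. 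With these two corrections, which you supply explicitly, your two-implication argument is complete --- in this instance your blind reconstruction is more careful than the proof in the paper.
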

\begin{proof}
We need to characterize when, for any $n\geq 1$, $W_n^{\omega}$ is not contained in $\{\delta \in X_A\mid t\cdot \delta =\delta\}$.

Since $\omega$ is fixed by $t$, by Lemma \ref{Lem:FixedU-sub-I} we have
$$K_j=l\cdot \prod\limits_{t=1}^{j}\displaystyle\frac{B_{i_t,i_{t+1}}}{A_{i_t,i_{t+1}}}\in \Z$$
for all $j\geq 0$. Thus, in order to get the condition above, for any $n\in \N$ we need to have $m\geq n$ and an alternative path $$\gamma=s_{i_1,i_2,n_1}\cdots s_{i_{m-1},i_{m},n_{m-1}}s_{i_{m},j_{m+1},p_{m}}\cdots  $$
such that $K_{m+1}=K_m\cdot \displaystyle\frac{B_{i_m, j_{m+1}}}{A_{i_m, j_{m+1}}}\not\in \Z$. Hence, the equivalence is clear.
\end{proof}

A practical situation under which Proposition \ref{Prop:TopFreeExtended} works is the following

\begin{corollary}\label{Corol:MesQueKatsura}
Given $t=u_i^l$ for $1\leq i\leq N$ and $l\in \Z$, we have that $\text{Int}(\{\omega \in X_A\mid t\cdot \omega =\omega\})=\emptyset$ (where $\omega$ denotes $s_{i_1,i_2,n_1} s_{i_2,i_3,n_2}\cdots s_{i_k,i_{k+1},n_k}\cdots$) if:
\begin{enumerate}
\item $B_{i_j,i_{j+1}}\ne 0$ for all $j$.
\item For every $n, r\geq 1$ there exist a sequence $j_{n+1}, j_{n+2}, \dots ,j_{n+r}$ with:
\begin{enumerate}
\item $(j_{n+t}, j_{n+t+1})\in \OmA$ for all $t$.
\item $\lim\limits_{r\rightarrow \infty}\prod\limits_{t=1}^{r}\left(\displaystyle\frac{B_{j_{n+t}, j_{n+t+1}}}{A_{j_{n+t}, j_{n+t+1}}}\right)=0$.
\end{enumerate}
\end{enumerate}
\end{corollary}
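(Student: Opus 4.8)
The plan is to deduce the statement from Proposition \ref{Prop:TopFreeExtended}: it suffices to verify the equivalent condition (2) of that proposition for $t=u_i^l$, i.e. that no fixed point $\omega$ of $t$ lies in the interior of the fixed-point set. Throughout I would assume $l\neq 0$, since $u_i^0=q_i$ is idempotent and is excluded when testing topological freeness. Recall from Lemma \ref{Lem:FixedU-sub-I} that $\omega=s_{i_1,i_2,n_1}s_{i_2,i_3,n_2}\cdots$ is fixed by $t$ exactly when every $K_j=l\prod_{t=1}^{j}\frac{B_{i_t,i_{t+1}}}{A_{i_t,i_{t+1}}}$ is an integer. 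Hypothesis (1) together with $l\neq 0$ guarantees that each such $K_j$ is a \emph{nonzero} integer, hence $|K_j|\geq 1$; this is the observation that drives the whole argument.

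Fixing $n\in\N$, I would show that the basic neighborhood $W_n^{\omega}$ is not contained in the fixed-point set of $t$. Using hypothesis (2), I would choose a path $j_{n+1},j_{n+2},\dots$ issuing from the vertex of $\omega$ reached after $n$ steps whose product $\prod_{t\geq 1}\frac{B_{j_{n+t},j_{n+t+1}}}{A_{j_{n+t},j_{n+t+1}}}$ of ratios tends to $0$, and splice it onto the initial segment $\omega|_n$ to form a new infinite path $\delta\in W_n^{\omega}$. Along $\delta$ the quantities $K_j$ coincide with those of $\omega$ up to position $n$ (so they are nonzero integers there), and from position $n$ onward they equal $K_n$ multiplied by the successive partial products of the chosen ratios.

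The decisive step is the following dichotomy. Since the partial products tend to $0$ while $K_n$ is a fixed nonzero integer, the running values $K_j$ computed along $\delta$ tend to $0$ in absolute value. Were all of these integers, they would be nonzero integers of modulus at least $1$, contradicting their convergence to $0$; hence some $K_j$ along $\delta$ fails to be an integer, and by Lemma \ref{Lem:FixedU-sub-I} the path $\delta$ is not fixed by $t$. As $\delta\in W_n^{\omega}$ and $n$ was arbitrary, no neighborhood of $\omega$ consists of fixed points, so $\omega$ is not interior. Feeding this into Proposition \ref{Prop:TopFreeExtended} then yields $\mathrm{Int}(\{\omega\in X_A\mid t\cdot\omega=\omega\})=\emptyset$.

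The main obstacle I anticipate is keeping the running values nonzero along the spliced path: the contradiction ``a nonzero integer has modulus $\geq 1$'' collapses if some ratio $\frac{B_{j_{n+t},j_{n+t+1}}}{A_{j_{n+t},j_{n+t+1}}}$ vanishes, for then the corresponding $K_j$ becomes $0$ and remains an integer (this is precisely the mechanism producing nonempty interior in Example \ref{Exam:FixedU-sub-I}(1)). I would therefore read hypothesis (2)(b) as providing a path with \emph{nonzero} factors whose product genuinely decays to $0$, and I would need to make this explicit and to check the index bookkeeping that attaches $j_{n+1},j_{n+2},\dots$ to $\omega|_n$, so that $\delta$ really agrees with $\omega$ on its first $n$ edges and shares its integer values $K_j$ up to the splice.
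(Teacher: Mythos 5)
Your proposal is correct and takes essentially the same route as the paper's own (very brief) proof: both splice the hypothesized path of decaying ratios onto $\omega\vert_n$ and exploit that $K_{n+r}=K_n\prod_{t=1}^{r}B_{j_{n+t},j_{n+t+1}}/A_{j_{n+t},j_{n+t+1}}$ eventually has modulus strictly between $0$ and $1$, hence cannot be an integer, so the spliced path lies in $W_n^{\omega}$ but is not fixed by Lemma \ref{Lem:FixedU-sub-I} --- your limiting/contradiction phrasing versus the paper's ``choose $r$ so that the product is smaller than $1/K_n$'' is only a cosmetic difference. The caveats you flag are genuine and are passed over in silence by the paper's one-line proof (a vanishing ratio would give $K_j=0\in\Z$ and destroy the non-integrality conclusion, and one needs $l\neq 0$ together with hypothesis (1) to know $K_n$ is a \emph{nonzero} integer), so your explicit treatment of them is a refinement of the same argument rather than a departure from it.
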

\begin{proof}
Notice that, given $n$, we have that $K_n$ is a fixed integer. So, by taking a sufficiently large sequence we can guarantee that $\displaystyle\frac{1}{K_n} >\prod\limits_{t=1}^{r}\displaystyle\frac{B_{j_{n+t}, j_{n+t+1}}}{A_{j_{n+t}, j_{n+t+1}}}$, so that $K_{n+r}\not\in \Z$.
\end{proof}

An example of application of Corollary \ref{Corol:MesQueKatsura} is the following

\begin{example}\label{Exam:MesQueKatsura}
{\rm Suppose that $B$ satisfies Condition (E) and that for every $1\leq i\leq N$ we have that $\vert B_{i,i}\vert <\vert A_{i,i}\vert$. Then Corollary \ref{Corol:MesQueKatsura} applies for every $t=u_i^l$ for $1\leq i\leq N$ and $l\in \Z$, because we can fix $j_{n+t}=i_{n}$ for all $t$, and then
$$\lim\limits_{r\rightarrow \infty}\prod\limits_{t=1}^{r}\left(\displaystyle\frac{B_{j_{n+t}, j_{n+t+1}}}{A_{j_{n+t}, j_{n+t+1}}}\right)=\lim\limits_{r\rightarrow \infty} \left(\displaystyle\frac{B_{i_{n}, i_{n}}}{A_{i_{n}, i_{n}}}\right)^r=0.$$
So, the action of $\alpha$ restricted to elements of the form $t=u_i^l$ for $1\leq i\leq N$ and $l\in \Z$ is topologically free. In particular, under the additional hypothesis of $B$ satisfying Condition (E), Corollary \ref{Corol:MesQueKatsura} applies for Katsura's conditions (\ref{KatsuraConditions}(2)) \cite[Proposition 2.10]{Kat1}.
}
\end{example}

\begin{lemma}\label{Lem:FixedMixedU-sub-I}
Given an element $\omega=s_{i_1,i_2,n_1} s_{i_2,i_3,n_2}\cdots s_{i_k,i_{k+1},n_k}\cdots$ of $X_A$, the following are equivalent:
\begin{enumerate}
\item $\omega$ is fixed under the action of $s_Iu_{t(I)}^l$ ($l\in \Z$ and $I$ multiindex).
\item We have that:
\begin{enumerate}
\item[(i)] $s_I$ is a cycle of the form $s_{i_1,i_2,n_1} s_{i_2,i_3,n_2}\cdots s_{i_k,i_{k+1},n_k}$ for some $k\in \N$.
\item[(ii)] There exists a sequence $(s_I^{(j)})_{j\geq 1}$ of cycles with $s_I^{(0)}=s_I$, and a sequence $(T_j)_{j\geq 0}$ of integers with $T_0=l$, such that $u_{t(I)}^{T_j}s_I^{(j)}=s_I^{(j+1)}u_{t(I)}^{T_{j+1}}$ for all $j\geq 1$, and $\omega= s_I s_{I}^{(1)}\cdots s_{I}^{(k)}\cdots$.
\end{enumerate}
\end{enumerate}
\end{lemma}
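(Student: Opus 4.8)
The plan is to read off both implications from the explicit description of the action in Theorem~\ref{Thm:RightPartialAction}, where $\alpha_t(\omega)=t\cdot\omega$ is computed by left multiplication in $\mathcal{S}^{A,B}$ followed by renormalisation of the resulting word into the alphabet $\{s_{i,j,n}\mid (i,j)\in\OmA,\ 1\le n\le A_{i,j}\}$. Throughout write $v:=t(I)$ and let $k$ be the length of $I$, and recall that for $t=s_Iu_v^l$ one has $t^*t=q_v$ and $tt^*=s_Is_I^*$. Hence $\omega\in X_{t^*t}$ forces $\omega$ to begin at $v$, while $\alpha_t(\omega)=\omega\in X_{tt^*}$ forces $\omega=s_I\omega'$ for some infinite path $\omega'$. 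Since $\omega=s_I\omega'$ begins at $s(I)$ and simultaneously begins at $v=t(I)$, we get $s(I)=t(I)$, which is precisely statement~(i): $s_I$ is a cycle.

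The first real step is the key shift identity. Writing the fixed-point equation as $s_I\cdot(u_v^l\cdot\omega)=\omega=s_I\omega'$ and cancelling the common prefix $s_I$ (legitimate because every element of $\LAB$ is monic, Proposition~\ref{Prop:LAB-Monic}), one obtains
\[
u_v^l\cdot\omega=\sigma^{k}(\omega),
\]
where $\sigma$ denotes the shift deleting the first generator. Since the $u$-action only rewrites the integer entries of the generators and never the vertices, the path $u_v^l\cdot\omega$ has the same vertex sequence as $\omega$; comparing it with $\sigma^k(\omega)$ shows that the vertex sequence of $\omega$ is periodic of period $k$. Consequently $\omega$ splits into consecutive length-$k$ blocks $B_0,B_1,B_2,\dots$, each of which is a cycle at $v$ with the same underlying vertex path as $s_I$, and $B_0=s_I$.

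The heart of the argument is to promote this shift identity to the recursive relation in~(ii). Exactly as in the computation of Lemma~\ref{Lem:FixedU-sub-I}, left multiplying a cycle $s_I^{(j)}$ at $v$ by $u_v^{T_j}$ shifts its first entry by $T_jB$, and each renormalisation passes a power of $u_v$ to the next generator; after traversing all $k$ generators one is left with $u_v^{T_j}s_I^{(j)}=s_I^{(j+1)}u_v^{T_{j+1}}$ for a uniquely determined cycle $s_I^{(j+1)}$ and integer $T_{j+1}$, starting from $T_0=l$ and $s_I^{(0)}=s_I$. Feeding these relations into $u_v^l\cdot\omega$ and telescoping yields $u_v^l\cdot\omega=B_1B_2\cdots$, which by the shift identity equals $\sigma^k(\omega)$; matching blocks gives $B_j=s_I^{(j)}$ for all $j$, so $\omega=s_Is_I^{(1)}s_I^{(2)}\cdots$, proving $(1)\Rightarrow(2)$.

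For the converse I would run the same telescoping forward: given the cycles $s_I^{(j)}$, the integers $T_j$ with $T_0=l$, the relations $u_v^{T_j}s_I^{(j)}=s_I^{(j+1)}u_v^{T_{j+1}}$, and $\omega=s_I^{(0)}s_I^{(1)}\cdots$, one computes $u_v^l\cdot\omega=s_I^{(1)}s_I^{(2)}\cdots=\sigma^k(\omega)$, whence $s_Iu_v^l\cdot\omega=s_I^{(0)}\sigma^k(\omega)=\omega$, i.e. $\alpha_t(\omega)=\omega$. The delicate point, which I expect to be the main obstacle, is the careful bookkeeping of the renormalisation: one must verify that propagating $u_v$ through a cycle really returns a power of $u_v$ at the \emph{same} vertex $v$ (so that the relation in~(ii) closes up), and that the infinite telescoping is well defined because every finite prefix of the output is fixed after finitely many single-block rewrites.
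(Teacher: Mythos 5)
Your proof is correct and is essentially the paper's own argument: you extract the prefix $s_I$ from the fixed-point equation to obtain (i), cancel it to get the shift identity $u_{t(I)}^l\cdot\omega=\sigma^k(\omega)$, and then recurse via the commutation rule $u_{t(I)}^{T_j}s_I^{(j)}=s_I^{(j+1)}u_{t(I)}^{T_{j+1}}$, which is exactly the paper's chain $\omega^{(1)}=u_{t(I)}^{l}s_I\cdot\omega^{(1)}=s_I^{(1)}u_{t(I)}^{T_1}\cdot\omega^{(1)}=s_I^{(1)}\cdot\omega^{(2)}$ followed by ``recurrence on this argument''. Your additional packaging (vertex-sequence periodicity and block matching) is just a more explicit rendering of that same recursion, and your converse is the direct telescoping computation the paper dismisses as ``clear''.
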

\begin{proof}
Sufficiency is clear. Let us proof necessity. Since $\omega= s_Iu_{t(I)}^l\cdot \omega$, we have that $\omega =s_I\cdot \omega{(1)}$, whence (i) holds. Now,
$$s_I\cdot \omega^{(1)}=\omega =s_Iu_{t(I)}^l\cdot \omega=s_Iu_{t(I)}^ls_I\cdot \omega^{(1)},$$
so that $\omega^{(1)}=u_{t(I)}^ls_I\cdot \omega^{(1)}=s_I^{(1)}u_{t(I)}^{T_1} \omega^{(1)}=s_I^{(1)}\cdot \omega^{(2)}$. Recurrence on this argument proves that (ii) holds, so we are done.
\end{proof}

\begin{remark}\label{esunic}
{\rm 
Notice that, as it occurs in Lemma \ref{Lem:RestrictedActionFixedPoints}, the element $\omega$ fixed by $t=s_Iu_{t(I)}^l$ is uniquely determined by $t$, and thus $\{\delta\in X_A\mid t\cdot \delta =\delta\}=\{\omega \}$.
}
\end{remark}

\begin{proposition}\label{Prop:TopFreeExtended2}
Given $t=s_Iu_i^l$ for $I\in E_A^*$, $1\leq i\leq N$ and $l\in \Z$, and $\omega=s_{i_1,i_2,n_1} s_{i_2,i_3,n_2}\cdots s_{i_k,i_{k+1},n_k}\cdots$ a fixed element by $t$. Then, the following are equivalent:
\begin{enumerate}
\item $\text{Int}(\{\delta \in X_A\mid t\cdot \delta =\delta\})=\emptyset$.
\item The graph $E_A$ satisfies Condition (L).
\end{enumerate}
\end{proposition}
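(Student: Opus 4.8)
The plan is to reduce statement (1) to a combinatorial condition on the vertex-cycle carried by $\omega$, and then to match this with Condition (L). First I would invoke Remark \ref{esunic}: since $t=s_Iu_i^l$ with $I$ nonempty, the fixed set $\{\delta\in X_A\mid t\cdot\delta=\delta\}$ is the singleton $\{\omega\}$, so that (1) is simply the assertion that $\omega$ is \emph{not} an isolated point of $X_A$. Next, by Lemma \ref{Lem:FixedMixedU-sub-I}, $s_I$ must be a cycle, say through the vertices $i_1\to i_2\to\cdots\to i_k\to i_{k+1}=i_1=i$, and $\omega=s_Is_I^{(1)}s_I^{(2)}\cdots$ where each $s_I^{(j)}$ is again a cycle on the \emph{same} vertex sequence; indeed the factor $u_i^l$ only shifts the edge labels $n$ through the relations of Definition \ref{Def:KatAlgAlgebra}(i), never the pairs $(i_t,i_{t+1})\in\OmA$. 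Thus $\omega$ winds around the single vertex-cycle $C:i_1\to\cdots\to i_k\to i_1$ forever, visiting each of its vertices infinitely often, and --- this is the key point --- whether $\omega$ is isolated depends only on $C$, not on $l$ or on the particular labels. I would therefore prove the pointwise equivalence: $\omega$ is isolated if and only if $C$ has no exit in $E_A$.

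For $(2)\Rightarrow(1)$, assume $E_A$ satisfies Condition (L). Then $C$ has an exit, i.e.\ there is a vertex $i_m$ of $C$ admitting an edge $s_{i_m,j,p}$ off the cycle (either $A_{i_m,i_{m+1}}\geq 2$, or some $j\in\OmA(i_m)$ with $j\neq i_{m+1}$). Since $\omega$ passes through $i_m$ infinitely often, given any $n\in\N$ I can pick an occurrence of $i_m$ at a position exceeding $n$ and branch along the exit edge; because Condition (0) forces $\OmA(l)\neq\emptyset$ for every vertex $l$, this finite branch extends to an infinite path $\gamma\in X_A$. Then $\gamma\in W_n^{\omega}$ but $\gamma\neq\omega$, so no basic neighbourhood $W_n^{\omega}$ reduces to $\{\omega\}$; hence $\omega$ is not isolated and $\text{Int}(\{\omega\})=\emptyset$, which is (1). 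This step is entirely parallel to the corresponding argument in the proof of Proposition \ref{Prop:RestrictedTopolFree}.

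For $(1)\Rightarrow(2)$ I would argue by contraposition. If $E_A$ fails Condition (L) there is a cycle $C':j_1\to\cdots\to j_p\to j_1$ with no exit, meaning $\OmA(j_r)=\{j_{r+1}\}$ and $A_{j_r,j_{r+1}}=1$ at every vertex $j_r$ of $C'$. Writing $I'=s_{j_1,j_2,1}\cdots s_{j_p,j_1,1}$ and taking $t'=s_{I'}$ (the case $l=0$, so that $u_{j_1}^{0}=q_{j_1}$ and $s_{I'}q_{j_1}=s_{I'}$), its unique fixed point is $\omega'=s_{I'}s_{I'}\cdots$; since every vertex of $C'$ emits exactly one edge, $W_1^{\omega'}=\{\omega'\}$, so $\omega'$ is isolated and $\text{Int}(\{\delta\mid t'\cdot\delta=\delta\})\neq\emptyset$. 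As in Proposition \ref{Prop:RestrictedTopolFree}, the precise reading is over the whole family of elements $t=s_Iu_i^l$: Condition (L) is a global property of the cycles of $E_A$, and it fails exactly when some such $t$ has an isolated fixed point.

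The main obstacle I anticipate is the quantifier bookkeeping, since Condition (L) speaks of \emph{all} cycles whereas a single $t$ only exposes its own cycle $C$; the genuine content is the pointwise equivalence ``$\omega$ isolated $\iff$ $C$ has no exit'', and the proposition is the aggregate statement obtained by letting $t$ range. Two technical points must be handled with care: that the factor $u_i^l$ is irrelevant to isolation (Lemma \ref{Lem:FixedMixedU-sub-I} guarantees it shifts labels only within the fixed vertex-cycle), and that Condition (0) guarantees the branching path constructed in $(2)\Rightarrow(1)$ can always be completed to an element of $X_A$.
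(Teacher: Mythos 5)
Your proof is correct and follows essentially the same route as the paper's: for $(2)\Rightarrow(1)$ you use Remark \ref{esunic} to reduce (1) to non-isolation of $\omega$ and then branch off the cycle at arbitrarily late positions via an exit guaranteed by Condition (L), and for $(1)\Rightarrow(2)$ you argue by contraposition, exhibiting the isolated fixed point $s_{I'}s_{I'}\cdots$ of an exitless cycle. Your explicit handling of the quantifier over $t$ (the proposition is really an aggregate statement over the family $t=s_Iu_i^l$, which is how the paper's own contrapositive implicitly reads it) and of exits arising from edge multiplicity $A_{i_m,i_{m+1}}\geq 2$ matches, and slightly tightens, the paper's argument.
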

\begin{proof}
Essentially, the proof is analog to that of Proposition \ref{Prop:RestrictedTopolFree}.

$(1)\Rightarrow (2)$ Suppose that $E_A$ do not satisfy Condition (L). Then, there exists a cycle without exits $s_I$. In particular, for $i=s(I)$ and any $l\in \Z$ we have that $u_i^l s_I=s_I u_i^{T_l}$ for some $T_l\in \Z$. Hence, the element $\omega =s_Is_I\cdots$ is fixed by $t=s_Iu_i^l$, and moreover for any $n\in \N$ we have that $W_n^{\omega}=\{\omega\}$. Thus, $(1)$ fails.

$(2)\Rightarrow (1)$ Suppose that $\omega =s_Is_I^{(1)}\cdots s_I^{(k)}\cdots$ is fixed by $t=s_I u_i^l$ for some $l\in \Z$. If $E_A$ satisfies Condition (L) and $s_I=s_{i_1,i_2,n_1} s_{i_2,i_3,n_2}\cdots s_{i_k,i_1,n_k}$, there exists $1\leq m\leq k$ and $j\in \Omega(i_m)\setminus \{i_{m+1}\}$. Hence, given any $n\in \N$ there is $r$ large enough such that $r\cdot \vert I\vert >n$, and the cycle $s_I^{(k+1)}$ has an exit $s_{i_m, j, p}$ with $1\leq p\leq A_{i_m, j}$. For any infinite path $\gamma\in X_{p_j}$ we have that
$$\tau:= s_Is_I^{(1)}\cdots s_I^{(r)}s_{i_1,i_2,n_1} s_{i_2,i_3,n_2}\cdots s_{i_{m-1},i_m,n_{m-1}}s_{i_m, j, p}\gamma \in W_n^{\omega},$$
and by Remark \ref{esunic} $\tau$ is not fixed by $t$. Thus $(1)$ holds. 
\end{proof}

\begin{remark}\label{Rem:general element}
{\rm For the case of elements $s=s_Iu_{r(I)}^ts_J^*$, if $\omega\in X_A$ and $s\cdot \omega=\omega$, then it is clear that $\omega=s_J\cdot \eta$ for some $\eta\in X_A$, and also that $s_J^*s_I\ne 0$, whence either $s_I=s_Js_K$ or $s_J=s_Is_K$ for a suitable multiindex $K$. In the first case, $s\cdot \omega=\omega $ if and only if $\eta= s_Ku_{r(I)}^t\cdot \eta$. In the second case, $s\cdot \omega=\omega $ if and only if $\eta= s_{\widehat{K}}u_{r(\widehat{K})}^l\cdot \eta$, where $s_{\widehat{K}}u_{r(\widehat{K})}^l=u_{r(I)}^{-t}s_K$. So, in any case, the problem reduces to applying Lemma \ref{Lem:FixedMixedU-sub-I}.
}
\end{remark}

Hence, we conclude that

\begin{theorem}\label{Thm:EssentiallyPrincipal}
Let $\alpha$ be the  action of $\mathcal{S}^{A,B}$ on $X_A$, and let $\mathcal{G}_{\LAB}$ the associated groupoid. The following are equivalent:
\begin{enumerate}
\item
\begin{enumerate}
\item The graph $E_A$ satisfies Condition (L).
\item The matrix $B$ satisfies Condition (E).
\item For any fixed point  $\omega=s_{i_1,i_2,n_1} s_{i_2,i_3,n_2}\cdots s_{i_k,i_{k+1},n_k}\cdots$ and every $n\geq 1$ there exist $m\geq n$ and $j_{m+1}$ with:
\begin{enumerate}
\item $(i_m, j_{m+1})\in \OmA$ .
\item $K_{m+1}=K_m\cdot \displaystyle\frac{B_{i_m, j_{m+1}}}{A_{i_m, j_{m+1}}}\not\in \Z$.
\end{enumerate}
\end{enumerate}
\item The groupoid $\mathcal{G}_{\LAB}$ is essentially principal.
\end{enumerate}
\end{theorem}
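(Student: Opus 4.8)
The plan is to route condition (2) through Theorem \ref{Theorem: Ruy3}, which converts the groupoid statement into a statement about the action $\alpha$. Condition (E), which is item (1)(b) and also the standing hypothesis of this section, guarantees via Remark \ref{Remark: Ruy4} that $\mathcal{S}^{A,B}$ is $E^*$-unitary, so Theorem \ref{Theorem: Ruy3} applies and tells us that $\mathcal{G}_{\LAB}$ is essentially principal if and only if $\alpha$ is topologically free. I would then feed this into Theorem \ref{Theorem: Ruy 2}(2): for an $E^*$-unitary inverse semigroup, topological freeness is equivalent to demanding that, for \emph{every} $s\in \mathcal{S}^{A,B}\setminus E(\mathcal{S}^{A,B})$, the set of fixed points of $\alpha_s$ has empty interior. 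Thus the whole problem collapses to checking this empty-interior property over all non-idempotent $s$ and matching the constraints that emerge with (1)(a) and (1)(c).

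The core of the argument is then a case analysis driven by the normal form of Lemma \ref{Lem:NormalFormOAB}: up to the inverse-semigroup structure, every element is $s_I u_{r(I)}^{t} s_J^*$, with the degenerate subcases $s\in \mathcal{S}^A\setminus E(\mathcal{S}^{A,B})$, $s=u_i^{l}$, and $s=s_I u_{r(I)}^{l}$. For $s\in \mathcal{S}^A\setminus E(\mathcal{S}^{A,B})$, Proposition \ref{Prop:RestrictedTopolFree} says the fixed sets have empty interior precisely when $E_A$ satisfies Condition (L), which is (1)(a). For $s=u_i^{l}$, Proposition \ref{Prop:TopFreeExtended} identifies emptiness of the interior with the arithmetic escape condition recorded in (1)(c), where the quantities $K_m$ are exactly the products of Lemma \ref{Lem:FixedU-sub-I}; here the universal quantifier over fixed points $\omega$ in (1)(c) corresponds to requiring empty interior for each such $\omega$. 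For $s=s_I u_i^{l}$, Proposition \ref{Prop:TopFreeExtended2} again reduces emptiness of the interior to Condition (L).

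The remaining and most delicate case is the genuinely mixed element $s=s_I u_{r(I)}^{t} s_J^*$ with $I\neq J$. Here I would invoke Remark \ref{Rem:general element}: a fixed point forces $\omega=s_J\cdot\eta$ with $s_J^* s_I\neq 0$, so either $s_I=s_J s_K$ or $s_J=s_I s_K$, and in each case the fixed-point equation for $\alpha_s$ on $\omega$ is equivalent to a fixed-point equation for an element of the form $s_K u^{l}$ (or $u^{l}$) acting on $\eta$, reducing to Lemma \ref{Lem:FixedMixedU-sub-I}. Consequently the interior of the fixed set of $\alpha_s$ is empty exactly when that of the reduced element is, so no constraints beyond (1)(a) and (1)(c) arise. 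Collecting the cases, $\alpha$ is topologically free if and only if (1)(a) and (1)(c) both hold; together with Condition (E) this yields (1)$\Leftrightarrow$(2). The necessity of (1)(b) itself, beyond its role as standing hypothesis, is also recoverable from Remark \ref{Rem:NonIsolated}, since topological freeness of the $u_i^l$-actions forces Condition (E).

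The hard part will be precisely this last reduction: I must check that the bijection intertwining $\alpha_s$ on $X_{s^*s}$ with the reduced action on the appropriate clopen set is a genuine homeomorphism onto an open set, so that interiors are preserved and the escape condition transfers faithfully. I also expect the degenerate overlaps between the subcases to require care — for instance when $I$ or $J$ is trivial, or when some $u_i^{t}$ is itself idempotent as in Remark \ref{Rem:Bij=0-irrellevant} — to be sure no element of $\mathcal{S}^{A,B}\setminus E(\mathcal{S}^{A,B})$ slips outside the classification, and to confirm that the quantifier structure of (1)(c) matches exactly ``empty interior for every fixed $\omega$ and every $u_i^l$''.
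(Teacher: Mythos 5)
Your proposal is correct and follows essentially the same route as the paper: the paper's proof likewise combines Proposition \ref{Prop:RestrictedTopolFree}, Proposition \ref{Prop:TopFreeExtended}, Proposition \ref{Prop:TopFreeExtended2} and Remark \ref{Rem:general element} to identify condition (1) with topological freeness of $\alpha$, and then invokes Remark \ref{Remark: Ruy4} and Theorem \ref{Theorem: Ruy3} to convert this into essential principality of $\mathcal{G}_{\LAB}$. The extra care you flag (that the reduction in Remark \ref{Rem:general element} preserves interiors, and that the quantifier structure of (1)(c) matches the empty-interior requirement) is detail the paper leaves implicit, but it is the same argument.
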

\begin{proof}
By Proposition \ref{Prop:RestrictedTopolFree}, Proposition \ref{Prop:TopFreeExtended}, Proposition \ref{Prop:TopFreeExtended2} and Remark \ref{Rem:general element}, $(1)$ is equivalent to the action of $\mathcal{S}^{A,B}$ being topologically free. By Remark \ref{Remark: Ruy4} and Theorem \ref{Theorem: Ruy3}, this is equivalent to the groupoid $\mathcal{G}_{\LAB}$ being essentially principal, as desired. 
\end{proof}

And as a practical consequence:

\begin{proposition}\label{Prop:EssentiallyPrincipal}
Let $\alpha$ be the action of $\mathcal{S}^{A,B}$ on $X_A$, and let $\mathcal{G}_{\LAB}$ the associated groupoid. If
\begin{enumerate}
\item The graph $E_A$ satisfies Condition (L).
\item The matrix $B$ satisfies Condition (E).
\item For any fixed point  $\omega=s_{i_1,i_2,n_1} s_{i_2,i_3,n_2}\cdots s_{i_k,i_{k+1},n_k}\cdots$ and for every $n, r\geq 1$ there exist a sequence $j_{n+1}, j_{n+2}, \dots ,j_{n+r}$ with:
\begin{enumerate}
\item $(j_t, j_{t+1})\in \OmA$ for all $t$.
\item $\lim\limits_{r\rightarrow \infty}\prod\limits_{t=1}^{r}\left(\displaystyle\frac{B_{j_{n+t}, j_{n+t+1}}}{A_{j_{n+t}, j_{n+t+1}}}\right)=0$.
\end{enumerate}
\end{enumerate}
then the groupoid $\mathcal{G}_{\LAB}$ is essentially principal.
\end{proposition}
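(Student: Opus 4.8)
The plan is to obtain this as a direct corollary of the characterization in Theorem~\ref{Thm:EssentiallyPrincipal}, by checking that hypotheses (1)--(3) here imply the three clauses (1)(a)--(1)(c) appearing there. First I would observe that clauses (1)(a) and (1)(b) of Theorem~\ref{Thm:EssentiallyPrincipal} are verbatim the present hypotheses (1) and (2), namely that $E_A$ satisfies Condition~(L) and that $B$ satisfies Condition~(E). So the only genuine work is to verify clause (1)(c) of that theorem, i.e.\ the condition on the integers $K_{m+1}$ for the fixed points of the actions $u_i^l$.

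The key step is to invoke Corollary~\ref{Corol:MesQueKatsura}, whose hypothesis (2) is precisely the present hypothesis (3). To apply the corollary I must also supply its hypothesis (1), namely $B_{i_j,i_{j+1}}\ne 0$ for all $j$ along a fixed point $\omega=s_{i_1,i_2,n_1}s_{i_2,i_3,n_2}\cdots$. This is where Condition~(E) is genuinely used: every edge $(i_j,i_{j+1})$ occurring in such an infinite path lies in $\OmA$ by the very definition of $X_A$, whence Condition~(E) forces $B_{i_j,i_{j+1}}\ne 0$. Thus Corollary~\ref{Corol:MesQueKatsura} applies and yields $\text{Int}(\{\omega\in X_A\mid u_i^l\cdot\omega=\omega\})=\emptyset$ for every $1\le i\le N$ and every $l\in\Z$. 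By Proposition~\ref{Prop:TopFreeExtended}, this emptiness of interior is exactly clause (1)(c) of Theorem~\ref{Thm:EssentiallyPrincipal}.

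Having established clauses (1)(a)--(1)(c), I would then simply quote Theorem~\ref{Thm:EssentiallyPrincipal} to conclude that $\mathcal{G}_{\LAB}$ is essentially principal. Since the statement is only a sufficient condition, no converse is required. The argument is an assembly of earlier results rather than a new computation, so I do not anticipate a real obstacle; the one point demanding care is the bookkeeping that matches hypothesis (3) to the correct clause. Concretely, one must keep track that the three families of elements of $\mathcal{S}^{A,B}$ are disposed of separately---the purely-$\mathcal{S}^A$ elements and the mixed cyclic elements $s_Iu_i^l$ are controlled by Condition~(L) through Propositions~\ref{Prop:RestrictedTopolFree} and~\ref{Prop:TopFreeExtended2} (together with Remark~\ref{Rem:general element}), while only the elements $u_i^l$ require the limit hypothesis (3)---and that Condition~(E) is indispensable both for the $E^*$-unitary framework of Theorem~\ref{Theorem: Ruy3} and for activating hypothesis (1) of Corollary~\ref{Corol:MesQueKatsura}.
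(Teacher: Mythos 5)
Your proposal is correct and is essentially the paper's own (implicit) argument: the paper presents this proposition as ``a practical consequence'' of Theorem~\ref{Thm:EssentiallyPrincipal}, obtained precisely by noting that hypotheses (1) and (2) are clauses (1)(a) and (1)(b) of that theorem, while hypothesis (3) yields clause (1)(c) through Corollary~\ref{Corol:MesQueKatsura} and Proposition~\ref{Prop:TopFreeExtended}. Your observation that Condition~(E) is what supplies the nonvanishing hypothesis $B_{i_j,i_{j+1}}\ne 0$ of Corollary~\ref{Corol:MesQueKatsura} along any fixed point is exactly the right bookkeeping, so there is nothing to add.
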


\section{Simplicity of $\OAB$}

In this section we use the results in the previous sections to characterize the simplicity of $\OAB$. Notice that, by Theorem \ref{Prop: amenable}, it is enough to show simplicity for $C_{r}^*(\mathcal{G}_{\LAB})$. The central result is

\begin{theorem}\label{Thm:Simple}
Consider the initial matrices $A,B$. If the matrix $B$ satisfies Condition (E), then the following are equivalent:
\begin{enumerate}
\item
\begin{enumerate}
\item The matrix $A$ is irreducible.
\item The graph $E_A$ satisfies Condition (L).
\item For any fixed point  $\omega=s_{i_1,i_2,n_1} s_{i_2,i_3,n_2}\cdots s_{i_k,i_{k+1},n_k}\cdots$ and every $n\geq 1$ there exist $m\geq n$ and $j_{m+1}$ with:
\begin{enumerate}
\item $(i_m, j_{m+1})\in \OmA$ .
\item $K_{m+1}=K_m\cdot \displaystyle\frac{B_{i_m, j_{m+1}}}{A_{i_m, j_{m+1}}}\not\in \Z$.
\end{enumerate}
\end{enumerate}
\item $\OAB$ is simple.
\end{enumerate}
\end{theorem}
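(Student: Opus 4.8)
The plan is to reduce simplicity of $\OAB$ to the standard dichotomy for reduced groupoid $C^*$-algebras --- namely that $C^*_r(\mathcal{G})$ is simple exactly when $\mathcal{G}$ is minimal and essentially principal --- and then to translate each of these two groupoid properties back into the matrix conditions listed in $(1)$.

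First I would assemble the groupoid picture. By Corollary \ref{Cor:OABisoGroupoid} we have $\OAB\cong C^*(\mathcal{G}_{\LAB})$, and since $\mathcal{G}_{\LAB}$ is amenable (Theorem \ref{Prop: amenable}) the full and reduced algebras coincide, so $\OAB\cong C^*_r(\mathcal{G}_{\LAB})$ and simplicity of $\OAB$ is the same as simplicity of $C^*_r(\mathcal{G}_{\LAB})$. To invoke \cite[Theorem 5.1]{SimpleGroupoid} I must verify its hypotheses: $\mathcal{G}_{\LAB}$ is \'etale with second countable unit space by Corollary \ref{Corol:Etale}; it is Hausdorff because $B$ satisfies Condition (E), so that $\SAB$ is $E^*$-unitary and Remark \ref{Rem:Cond(E)} applies; and it is second countable since $\mathcal{S}^{A,B}$ is countable and $X_A$ is second countable. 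Hence \cite[Theorem 5.1]{SimpleGroupoid} gives that $C^*_r(\mathcal{G}_{\LAB})$ is simple if and only if $\mathcal{G}_{\LAB}$ is both minimal and essentially principal.

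It then remains to identify these two groupoid properties with the conditions of $(1)$. Minimality of $\mathcal{G}_{\LAB}$ is equivalent to irreducibility of $A$ by Corollary \ref{Corol:Act-Transitive}, which is precisely condition $(1)(a)$. Essential principality is equivalent, by Theorem \ref{Thm:EssentiallyPrincipal}, to the conjunction of Condition (L) for $E_A$, Condition (E) for $B$, and the non-integrality condition on the partial products $K_{m+1}$, i.e. condition $(1)(c)$. Since Condition (E) is our standing hypothesis, this conjunction collapses to exactly conditions $(1)(b)$ and $(1)(c)$. Combining the two translations, $\OAB$ is simple if and only if $(1)(a)$, $(1)(b)$ and $(1)(c)$ all hold, which is statement $(1)$.

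Since the theorem is essentially a synthesis of the earlier sections, there is no single computational obstacle; the step requiring most care is the correct verification of the hypotheses of the groupoid simplicity criterion, and in particular the bookkeeping of Condition (E). This condition enters the argument twice: once to guarantee Hausdorffness of $\mathcal{G}_{\LAB}$, so that \cite[Theorem 5.1]{SimpleGroupoid} is applicable at all, and once as the component of essential principality that is absorbed into the standing hypothesis, so that only $(1)(b)$ and $(1)(c)$ remain in the final list. Keeping these two roles distinct is what makes the collapse from Theorem \ref{Thm:EssentiallyPrincipal} to the three-part condition $(1)$ clean.
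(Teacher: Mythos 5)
Your proposal is correct and follows essentially the same route as the paper's own proof: both reduce to simplicity of $C^*_r(\mathcal{G}_{\LAB})$ via Corollary \ref{Cor:OABisoGroupoid} and Theorem \ref{Prop: amenable}, verify the hypotheses of \cite[Theorem 5.1]{SimpleGroupoid} using Remark \ref{Rem:Cond(E)} and Corollary \ref{Corol:Etale}, and translate minimality and essential principality back to conditions $(1)(a)$--$(1)(c)$ via Corollary \ref{Corol:Act-Transitive} and Theorem \ref{Thm:EssentiallyPrincipal}. Your explicit remark on the double role of Condition (E) --- ensuring Hausdorffness and being the absorbed component of essential principality --- is a helpful clarification of bookkeeping the paper leaves implicit.
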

\begin{proof}
By Remark \ref{Rem:Cond(E)} and Corollary \ref{Corol:Etale} the groupoid $\mathcal{G}_{\LAB}$ is Hausdorff, \'etale with second countable unit space. By Corollary \ref{Corol:Act-Transitive} and Theorem \ref{Thm:EssentiallyPrincipal}, conditions in point (1) are equivalent to $\mathcal{G}_{\LAB}$ being a minimal  essentially principal groupoid. These three facts jointly are equivalent to $C_{r}^*(\mathcal{G}_{\LAB})$ being simple \cite[Theorem 5.1]{SimpleGroupoid}. Thus, Theorem \ref{Prop: amenable} give us the desired result.
\end{proof}

And as a direct consequence we have:

\begin{proposition}\label{Prop:Simple}
Consider the initial matrices $A,B$. If
\begin{enumerate}
\item The matrix $A$ is irreducible.
\item The graph $E_A$ satisfies Condition (L).
\item The matrix $B$ satisfies Condition (E).
\item For any fixed point  $\omega=s_{i_1,i_2,n_1} s_{i_2,i_3,n_2}\cdots s_{i_k,i_{k+1},n_k}\cdots$ and for every $n, r\geq 1$ there exist a sequence $j_{n+1}, j_{n+2}, \dots ,j_{n+r}$ with:
\begin{enumerate}
\item $(j_t, j_{t+1})\in \OmA$ for all $t$.
\item $\lim\limits_{r\rightarrow \infty}\prod\limits_{t=1}^{r}\left(\displaystyle\frac{B_{j_{n+t}, j_{n+t+1}}}{A_{j_{n+t}, j_{n+t+1}}}\right)=0$.
\end{enumerate}
\end{enumerate}
then $\OAB$ is simple.
\end{proposition}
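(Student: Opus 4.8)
The plan is to obtain this as an immediate consequence of Theorem~\ref{Thm:Simple}, by checking that the four hypotheses listed here imply the conditions appearing in point~(1) of that theorem. The matrices $A,B$ are the same in both statements, and hypothesis~(3)---Condition~(E) for $B$---is precisely the standing assumption under which Theorem~\ref{Thm:Simple} is formulated. So it suffices to verify that hypotheses~(1), (2) and~(4) yield conditions~(1)(a), (1)(b) and~(1)(c) of the theorem, after which Theorem~\ref{Thm:Simple} delivers simplicity of $\OAB$ at once.

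First I would note that hypotheses~(1) and~(2)---irreducibility of $A$ and Condition~(L) for the graph $E_A$---are literally conditions~(1)(a) and~(1)(b) of Theorem~\ref{Thm:Simple}, so no argument is needed there. The only genuine step is to show that hypothesis~(4), the vanishing-product condition, implies condition~(1)(c) of the theorem: that for every fixed point $\omega = s_{i_1,i_2,n_1}s_{i_2,i_3,n_2}\cdots$ and every $n\geq 1$ there exist $m\geq n$ and an index $j_{m+1}$ with $(i_m,j_{m+1})\in\OmA$ and $K_{m+1}=K_m\cdot\frac{B_{i_m,j_{m+1}}}{A_{i_m,j_{m+1}}}\notin\Z$. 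For this I would invoke Corollary~\ref{Corol:MesQueKatsura}, whose hypotheses are exactly hypothesis~(4): fixing $n$, the integer $K_n$ is a fixed quantity, and since $\prod_{t=1}^{r}\frac{B_{j_{n+t},j_{n+t+1}}}{A_{j_{n+t},j_{n+t+1}}}\to 0$ as $r\to\infty$, one may take $r$ large enough that this product is strictly smaller than $1/K_n$, forcing $K_{n+r}\notin\Z$ and supplying the required witnessing index $m=n+r$. Thus condition~(1)(c) holds.

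With all of~(1)(a)--(1)(c) established and Condition~(E) in force, Theorem~\ref{Thm:Simple} gives that $\OAB$ is simple, completing the proof. The argument is essentially immediate once Theorem~\ref{Thm:Simple} and Corollary~\ref{Corol:MesQueKatsura} are in hand, so I do not expect any real obstacle; the only point demanding a moment's care is the matching of quantifiers---condition~(1)(c) of the theorem asks, for each level $n$, for a \emph{single} index $m\geq n$ at which integrality of $K_{m}$ fails, whereas hypothesis~(4) is phrased as a \emph{limit} over growing sequences of edges---but precisely this gap is bridged by the elementary estimate in the proof of Corollary~\ref{Corol:MesQueKatsura}.
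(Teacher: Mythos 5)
Your overall strategy --- reduce everything to Theorem~\ref{Thm:Simple} --- is close in spirit to the paper's, but the one step that actually requires an argument, namely that hypothesis~(4) implies condition~(1)(c) of Theorem~\ref{Thm:Simple}, is not established by what you wrote, and this is a genuine gap. Condition~(1)(c) demands a \emph{single} edge $(i_m,j_{m+1})\in\OmA$ branching off the fixed point $\omega$ at one of $\omega$'s \emph{own} vertices $i_m$, such that $K_m\cdot B_{i_m,j_{m+1}}/A_{i_m,j_{m+1}}\notin\Z$, where $K_m$ is the $m$-th multiplier of $\omega$ itself (an integer, since $\omega$ is fixed). Hypothesis~(4), by contrast, supplies an auxiliary path $j_{n+1},j_{n+2},\dots$ leaving $\omega$ at position $n$, and the quantity you force to be non-integral, $K_{n+r}=K_n\prod_{t=1}^{r}B_{j_{n+t},j_{n+t+1}}/A_{j_{n+t},j_{n+t+1}}$, is computed along that auxiliary path. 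Declaring $m=n+r$ to be ``the required witnessing index'' conflates the two: at position $n+r$ the vertex of $\omega$ is $i_{n+r}$, not $j_{n+r}$, and $\omega$'s own multiplier $K_{n+r}$ \emph{is} an integer. Unless integrality already fails at the very first branching step ($r=1$), your witness does not satisfy~(1)(c); and if the first failure occurs at step $r^{*}>1$, the offending edge $(j_{n+r^{*}-1},j_{n+r^{*}})$ hangs off the auxiliary path rather than off $\omega$, and nothing guarantees that path is an initial segment of some fixed point to which~(1)(c) could be applied instead. So the quantifier mismatch you flagged at the end is not bridged by the elementary estimate alone.

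What the estimate \emph{does} prove is exactly the conclusion of Corollary~\ref{Corol:MesQueKatsura}: the point which follows $\omega$ up to position $n$ and then the auxiliary path lies in $W_n^{\omega}$ and is not fixed, so the fixed-point set of each $u_i^l$ has empty interior. This is precisely why the paper's proof never passes through condition~(1)(c): it replaces Theorem~\ref{Thm:EssentiallyPrincipal} by Proposition~\ref{Prop:EssentiallyPrincipal} in the proof of Theorem~\ref{Thm:Simple}, i.e.\ it uses hypotheses~(2)--(4) only to conclude topological freeness (empty interior of fixed-point sets, hence essential principality via Theorem~\ref{Theorem: Ruy3}), and then runs the same groupoid argument: Hausdorff, \'etale, second countable, minimal by Corollary~\ref{Corol:Act-Transitive}, essentially principal, hence $C^*_r(\mathcal{G}_{\LAB})$ is simple, hence so is $\OAB$ by Theorem~\ref{Prop: amenable}. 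To repair your write-up, either follow that route, or, if you insist on feeding condition~(1)(c) into Theorem~\ref{Thm:Simple}, you must interpose the equivalence $(1)\Leftrightarrow(2)$ of Proposition~\ref{Prop:TopFreeExtended} to convert ``empty interior'' back into the single-edge branching condition; the direct identification $m=n+r$ is incorrect.
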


\begin{proof}
Replace Theorem \ref{Thm:EssentiallyPrincipal} by Proposition \ref{Prop:EssentiallyPrincipal} in the proof of Theorem \ref{Thm:Simple}.
\end{proof}

As a corollary we have

\begin{corollary}\label{Corol:Simple}
Consider the initial matrices $A,B$. If they satisfy (\ref{KatsuraConditions}(2)) and $B$ satisfies Condition (E), then $\OAB$ is simple.
\end{corollary}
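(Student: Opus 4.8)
The plan is to deduce the statement directly from Proposition~\ref{Prop:Simple} by checking that Katsura's conditions (\ref{KatsuraConditions}(2)), together with Condition (E) on $B$, imply the four hypotheses (1)--(4) listed there. Two of these are immediate: hypothesis (1), that $A$ is irreducible, is exactly (\ref{KatsuraConditions}(2)(a)); and hypothesis (3), that $B$ satisfies Condition (E), is part of the standing assumption. So the work reduces to verifying hypothesis (2), that $E_A$ satisfies Condition (L), and hypothesis (4), the vanishing of the product of ratios along a suitable path.

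For hypothesis (2), I would argue as follows. By (\ref{KatsuraConditions}(2)(b)) we have $A_{i,i}\geq 2$ for every $i$, so for each vertex $i$ there are at least two distinct self-loops $s_{i,i,1}$ and $s_{i,i,2}$ in $E_A$. Consequently any cycle in $E_A$ passes through some vertex $i$ at which a second self-loop provides an edge leaving $i$ different from the one used by the cycle; this is an exit. Hence every cycle has an exit and $E_A$ satisfies Condition (L).

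For hypothesis (4), the key observation is that (\ref{KatsuraConditions}(2)(b)) forces $\vert B_{i,i}\vert = 1 < 2 \leq A_{i,i} = \vert A_{i,i}\vert$ for every $i$, so the situation is precisely the one treated in Example~\ref{Exam:MesQueKatsura}. Given a fixed point $\omega = s_{i_1,i_2,n_1}s_{i_2,i_3,n_2}\cdots$ and integers $n,r\geq 1$, I would simply take the constant sequence $j_{n+t}=i_n$ for all $t$; since $A_{i_n,i_n}\geq 2\geq 1$ we have $(i_n,i_n)\in\OmA$, so $(j_{n+t},j_{n+t+1})\in\OmA$, and
$$\lim_{r\rightarrow\infty}\prod_{t=1}^{r}\frac{B_{j_{n+t},j_{n+t+1}}}{A_{j_{n+t},j_{n+t+1}}} = \lim_{r\rightarrow\infty}\left(\frac{B_{i_n,i_n}}{A_{i_n,i_n}}\right)^{r} = \lim_{r\rightarrow\infty}\left(\frac{1}{A_{i_n,i_n}}\right)^{r}=0,$$
because $0 < 1/A_{i_n,i_n}\leq 1/2 < 1$. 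This establishes hypothesis (4).

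With all four hypotheses of Proposition~\ref{Prop:Simple} in place, simplicity of $\OAB$ follows at once. Since every step is a direct verification against conditions already isolated in the preceding sections, I do not anticipate a genuine obstacle; the only point requiring a moment's care is the Condition (L) check, where one must note that it is the multiplicity $A_{i,i}\geq 2$ (rather than merely the existence of a self-loop) that supplies the needed exit at each vertex of an arbitrary cycle.
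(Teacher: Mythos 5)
Your proposal is correct and follows essentially the same route as the paper: the corollary is obtained by feeding Katsura's conditions (\ref{KatsuraConditions}(2)) plus Condition (E) into Proposition~\ref{Prop:Simple}, with hypothesis (4) checked exactly as in Example~\ref{Exam:MesQueKatsura} (constant sequence $j_{n+t}=i_n$, using $\vert B_{i_n,i_n}\vert=1<2\leq A_{i_n,i_n}$). Your explicit verification of Condition (L) from $A_{i,i}\geq 2$ (a second self-loop at any vertex of a cycle supplies an exit) is the step the paper leaves implicit, and it is carried out correctly.
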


\begin{remark}\label{Rem:SimpleLlevatAmenable}
{\rm  Notice that we need an extra property --Condition (E)-- to obtain a characterization result for simplicity of $\OAB$, so that our results do not give a complete characterization of simplicity for $\OAB$. Nevertheless, our strategy will allow us to describe simplicity of $\OAB$ for a broad collection of algebras, including the ones given by Katsura. Moreover, the results are obtained in a more natural way, by linking this property to dynamical properties of $X_A$.
}
\end{remark}

\section{Pure infiniteness of $\OAB$}

In this section we use the results in the previous sections to give sufficient conditions for the algebra $\OAB$ being purely infinite (simple). As in Section 8 notice that, by Theorem \ref{Prop: amenable}, it is enough to show simplicity for $C_{r}^*(\mathcal{G}_{\LAB})$. For this we need to recall a slight reformulation of a definition from \cite{A}.

\begin{definition}[{\cite[Definition 2.1]{A}}]\label{Def: LocallyContracting}
{\rm We say that a second countable \'etale groupoid $\mathcal{G}$ is locally contracting in for every nonempty open subset $U$ of $\mathcal{G}^{(0)}$ there exists an open subset $V$ in $U$ and an slice $S$ such that $\overline{V}\subset S^{-1}S$ and $S\overline{V}S^{-1}$ is properly contained in $V$.
}
\end{definition}

In our case, using \cite[Proposition 4.18]{Exel1} and (\ref{GroupoidPicture}) we have that, for any $s\in \mathcal{S}^{A,B}$ and for any $U\subseteq X_{s^*s}$, the sets
$\Omega (s, U)=\{[s,x]\in \mathcal{G}_{\Lambda_{A,B}} \mid x\in U\}$
are slices (in fact they form a basis for the topology of $\mathcal{G}_{\Lambda_{A,B}}$). In view of that, we can restrict our attention to this kind of slices. Given any such slice $T:=\Omega (s, U)$, it is easy to see that for any $x\in \overline{V}\subset T^{-1}T$ the unique element of $T$ which can act by conjugation on $x$ is $[s,x]$, and so $[s, x]\cdot x\cdot [s,x]^{-1}=s\cdot x$. Under this reduction, we can understand $T\overline{V}T^{-1}$ as the set $\alpha_s(\overline{V})$ of images of $\overline{V}$ by the action $\alpha_s$ performed by the element $s\in\mathcal{S}^{A,B}$ defining the slice. Also, recall that $\{W_n^{\gamma}\mid n\in \N , \gamma\in X_A\}$ is a basis of clopen sets of $X_A$.

\begin{noname}\label{CondLocContr}
{\rm Thus, in order to check local contractiveness of $\mathcal{G}_{\Lambda_{A,B}}$ it is enough to show that for any $n\in \N$ and any $\gamma\in X_A$ there exist $n<m\in \N$, $\delta\in X_A$ and $s\in \mathcal{S}^{A,B}$ such that:
\begin{enumerate}
\item $W_m^{\delta}\subseteq W_n^{\gamma}$ (equivalently, $\delta\vert _n=\gamma\vert _n$).
\item $W_m^{\delta}\subseteq X_{s^*s}$.
\item For any $\omega \in W_m^{\delta}$ we have $\alpha_s(\omega)=s\cdot \omega \in W_m^{\delta}$.
\item There exists $\widehat{\omega}\in W_m^{\delta}$ such that $\widehat{\omega}\ne \alpha_s(\omega)=s\cdot \omega $ for every $\omega \in W_m^{\delta}$.
\end{enumerate}}
\end{noname}

A subsequent reduction can be done on the choice of $s\in \mathcal{S}^{A,B}$. To be concrete, we only need to analyze the following cases:
\begin{enumerate}
\item $s=u_i$, for any $1\leq i\leq N$.
\item $s=u_i^*$, for any $1\leq i\leq N$.
\item $s=s_I$ for any multiindex $I$.
\item $s=s_I^*$ for any multiindex $I$.
\end{enumerate}
because any $s\in \mathcal{S}^{A,B}$ can be written using the above elements. Let us study case by case.

\begin{lemma}\label{Lem:case u_i}
Let $n\in \N$, $\gamma\in X_A$. Then, $\alpha_{u_i}(W_n^{\gamma})\subseteq W_n^{\gamma}$ implies that $s(\gamma\vert_n)=i, r(\gamma\vert_n)=j$ and there exists $l\in \Z$ such that $u_i(\gamma\vert_n)=(\gamma\vert_n)u_j^l$. In particular, the above inclusion is always an equality.
\end{lemma}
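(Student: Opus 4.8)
The plan is to exploit two structural facts: that $\alpha_{u_i}$ is only defined on $X_{u_i^*u_i}=X_{q_i}$, and that left multiplication by $u_i$ preserves the underlying vertex sequence of a path while merely shifting the edge-labels through the relations of Definition \ref{Def:KatAlgAlgebra}(i). First I would record the domain condition. Since $u_i^*u_i=q_i$, the map $\alpha_{u_i}$ is a homeomorphism with domain $X_{q_i}$, the set of paths with source $i$. For $\alpha_{u_i}(W_n^{\gamma})$ to be defined (in the sense of \ref{CondLocContr}(2)) we need $W_n^{\gamma}\subseteq X_{q_i}$; as every element of $W_n^{\gamma}$ shares the first edge of $\gamma$, this forces $s(\gamma\vert_n)=i$. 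Writing $\gamma\vert_n=s_{i_1,i_2,n_1}\cdots s_{i_n,i_{n+1},n_n}$ with $i_1=i$ and setting $j:=r(\gamma\vert_n)=i_{n+1}$ disposes of the first two assertions.

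Next I would compute $\alpha_{u_i}(\omega)=u_i\cdot\omega$ for $\omega\in W_n^{\gamma}$ and put it in standard form. Using $u_is_{i,j,n}=s_{i,j,n+B_{i,j}}$ together with the reduction $s_{i,j,m+tA_{i,j}}=s_{i,j,m}u_j^t$, left multiplication by $u_{i_1}$ transports a ``carry'' power of $u$ along the path: one produces integers $c_0=1$ and, inductively, $m_t,c_t$ satisfying $n_t+c_{t-1}B_{i_t,i_{t+1}}=m_t+c_tA_{i_t,i_{t+1}}$ with $1\le m_t\le A_{i_t,i_{t+1}}$ (this is precisely the bookkeeping already used in Lemma \ref{Lem:FixedU-sub-I}). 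Crucially, the labels $m_1,\dots,m_n$ and the carry $c_n$ depend only on $n_1,\dots,n_n$, which every element of $W_n^{\gamma}$ shares with $\gamma$. Hence $\alpha_{u_i}(\omega)\in W_n^{\gamma}$ holds exactly when $m_t=n_t$ for all $t\le n$, a condition already witnessed by $\omega=\gamma$. The hypothesis therefore yields $m_t=n_t$ for $t\le n$, and reading off the resulting standard form of the product $u_i(\gamma\vert_n)$ gives the claimed identity $u_i(\gamma\vert_n)=(\gamma\vert_n)u_j^{\,c_n}$, i.e.\ the existence of $l=c_n\in\Z$.

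Finally, to upgrade the inclusion to an equality, I would observe that $u_j^l$ is again a partial unitary with $(u_j^l)^*u_j^l=u_j^l(u_j^l)^*=q_j$, so that $\alpha_{u_j^l}$ is a homeomorphism of $X_{q_j}$ onto itself. Identifying $W_n^{\gamma}$ with $X_{q_j}$ through $\omega'\mapsto(\gamma\vert_n)\omega'$, the computation above shows that $\alpha_{u_i}$ acts on $W_n^{\gamma}$ by $(\gamma\vert_n)\omega'\mapsto(\gamma\vert_n)(u_j^l\cdot\omega')$; since $\omega'\mapsto u_j^l\cdot\omega'$ is surjective onto $X_{q_j}$, the image is all of $W_n^{\gamma}$, giving equality. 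The only real obstacle is the carry bookkeeping of the second step---verifying that the first $n$ standard-form labels are independent of the tail---but this is exactly the computation behind Lemma \ref{Lem:FixedU-sub-I}, so no new difficulty arises; the conceptual content, namely that equality follows from the surjectivity of $\alpha_{u_j^l}$, is then immediate.
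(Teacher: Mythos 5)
Your proof is correct and follows essentially the same route as the paper's: the domain condition ($u_i=u_iq_i$) forces $s(\gamma\vert_n)=i$; pushing $u_i$ through $\gamma\vert_n$ yields $u_i(\gamma\vert_n)=(\gamma\vert_n)u_j^l$ (your carry bookkeeping just makes explicit what the paper leaves implicit); and equality follows from the bijectivity of the action of $u_j^l$ on tails in $X_{q_j}$, where the paper simply exhibits the explicit preimage $(u_j^*)^l\cdot\delta$ of a given tail $\delta$ rather than invoking surjectivity of the homeomorphism $\alpha_{u_j^l}$ as you do. The two arguments are the same in substance, yours being somewhat more detailed.
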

\begin{proof}
Since $u_i=u_ip_i$, it is clear that $s(\gamma\vert_n)=i$. Now, set $r(\gamma\vert_n)=j$. By definition, for any $\omega \in W_n^{\gamma}$ we have $\omega\vert_n =\gamma\vert_n$. Hence, $u_i\cdot\omega \in W_n^{\gamma}$ means that for any $\delta \in X_{p_j}$ there exists $l\in \Z$ such that $u_i(\gamma\vert_n)\cdot \delta = ((\gamma\vert_n)u_j^l)\cdot \delta$.

In order to see that the above inequality is an equality it is enough to observe that, for any $\delta \in X_{p_j}$, the element $\eta :={(u_i^*)}^l\cdot \delta \in X_{p_j}$ and $u_i(\gamma\vert_n)\cdot \eta = ((\gamma\vert_n)u_j^l)\cdot \delta$.
\end{proof}

It is clear that the same argument applies to $s=u_i^*$. So, in order to check local contractiveness of $\mathcal{G}_{\Lambda_{A,B}}$ we only need to pay attention to the remaining cases.

\begin{lemma}\label{Lem:case s_I}
Let $n\in \N$, $\gamma\in X_A$. Then, for any $n=k\cdot \vert I \vert$, $\alpha_{s_I}(W_n^{\gamma})\subseteq W_n^{\gamma}$ implies that $s(\gamma\vert_n)=r(I)$, $s_I$ is a cycle and $\gamma\vert_n=s_Is_I\cdots s_I$ ($k$ times). Moreover, the above inclusion is strict if and only if the cycle $s_I$ has an exit.
\end{lemma}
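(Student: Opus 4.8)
The plan is to exploit that, by Theorem~\ref{Thm:RightPartialAction}, $\alpha_{s_I}$ is just left multiplication by $s_I$, and that $s_I^*s_I=q_{r(I)}$, so $X_{s_I^*s_I}$ is exactly the set of infinite paths emitted by the vertex $r(I)$. Since this lemma is used while verifying condition (2) of (\ref{CondLocContr}), I would work under the standing assumption that $\alpha_{s_I}$ is genuinely defined on all of $W_n^\gamma$, i.e. $W_n^\gamma\subseteq X_{s_I^*s_I}$; this is equivalent to $s(\gamma\vert_n)=r(I)$, which already delivers the first asserted conclusion. (If instead $W_n^\gamma\cap X_{s_I^*s_I}=\emptyset$, the inclusion holds vacuously and there is nothing of interest, so this convention is the correct reading.)

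For the other conclusions of the first part I would simply apply the hypothesis $\alpha_{s_I}(W_n^\gamma)\subseteq W_n^\gamma$ to the single point $\gamma\in W_n^\gamma$. Writing $\ell=\vert I\vert$ and comparing length-$n$ truncations, $(s_I\cdot\gamma)\vert_n = s_I\,(\gamma\vert_{n-\ell})$ must equal $\gamma\vert_n$; reading off the first $\ell$ letters gives $\gamma\vert_\ell=s_I$, while the remaining equalities give the periodicity $\gamma_{\ell+j}=\gamma_j$ for $1\le j\le n-\ell$. Because $n=k\ell$, an immediate induction then yields $\gamma\vert_n=s_Is_I\cdots s_I$ ($k$ times). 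For this word to be a legitimate path one needs $t(s_I)=s(s_I)$, i.e. $r(I)=s(I)$, so $s_I$ is a cycle.

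For the \emph{moreover} part I would parametrise $W_n^\gamma$: since $\gamma\vert_n=s_I^{\,k}$ with $s_I$ a cycle based at $v:=s(I)=r(I)$, every $\tau\in W_n^\gamma$ is uniquely $\tau=s_I^{\,k}\eta$ with $\eta$ an infinite path emitted by $v$, and conversely. Under this identification $\alpha_{s_I}(W_n^\gamma)$ corresponds to those $\eta$ whose first $\ell$ edges spell $s_I$. Hence the inclusion is an equality precisely when every infinite path from $v$ begins with $s_I$, which by induction along the cycle forces the unique such path to be $s_Is_I\cdots$. I would then match this with the graph condition: if $s_I$ has no exit, each vertex $i_m$ of the cycle emits only the edge $e_m$ of $s_I$, so the path from $v$ is forced, $W_n^\gamma=\{s_Is_I\cdots\}$ is a singleton fixed by $\alpha_{s_I}$, and the inclusion is an equality; if $s_I$ has an exit at some $i_m$ (either $A_{i_m,i_{m+1}}\ge 2$, or an edge from $i_m$ to a vertex $\ne i_{m+1}$), I would follow $e_1\cdots e_{m-1}$ and then that exit edge to build a path deviating from $s_I$ within its first $\ell$ letters, giving strict inclusion.

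The step I expect to require the most care is this last construction: I must guarantee that the deviating initial segment can be completed to an \emph{infinite} path of $X_A$, which is exactly where Condition (0) of Definition~\ref{Def:KatAlgData} enters (every vertex satisfies $\OmA(i)\ne\emptyset$, so there are no springs, cf.\ Lemma~\ref{Lem:LAB-CategoricalNoSprings}), and I must phrase ``exit'' so that the deviation provably occurs within the first $\ell$ letters, ensuring the resulting path really leaves the image. The only other delicate point is the bookkeeping around the domain convention, i.e.\ keeping the statement nonvacuous by insisting that $\alpha_{s_I}$ act on $W_n^\gamma$; once that is fixed, the forward direction is a routine truncation comparison.
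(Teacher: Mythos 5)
Your proposal is correct and takes essentially the same route as the paper's proof: the first part by comparing length-$n$ truncations of $s_I\cdot\gamma$ with $\gamma$ (the paper phrases this as $\gamma\vert_n=s_I\tau$ plus recurrence), and the \emph{moreover} part by noting that equality holds exactly when every infinite path from the base vertex begins with $s_I$, then producing a deviating path from an exit to get strictness. Your extra care about the domain convention ($W_n^{\gamma}\subseteq X_{s_I^*s_I}$) and about invoking Condition (0) to extend the deviating segment to an infinite path merely makes explicit points the paper leaves implicit.
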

\begin{proof}
It is obvious that $s(\gamma\vert_n)=r(I)$. On the other side, if for any $\omega \in W_n^{\gamma}$ we have $s_I\cdot \omega \in W_n^{\gamma}$, then $r(I)=s(s_I(\gamma\vert_n))=s(I)$, whence $s_I$ is a cycle. Also, ${(s_I\gamma\vert_n)}\vert_n =\gamma\vert_n$, so that $\gamma\vert_n=s_I\tau$. Recurrence on this argument shows that $\gamma\vert_n=s_Is_I\cdots s_I$.

Now, if $s_I$ has no exits, then any $\omega\in X_{p_{r(I)}}$ must be $s_Is_Is_I\cdots$, and so we have equality. Conversely, if $s_I$ have an exit, then there exists a multiindex $K\ne I$ such that $s_Is_K\ne 0$. Then, if $r(K)=j$, for any $\delta \in X_{p_{r(K)}}$ we have that $(\gamma\vert_n)s_K\cdot \delta \in W_n^{\gamma}$ but not in $\alpha_{s_I}(W_n^{\gamma})$, so we are done.
\end{proof}

Since $\alpha_{s_I^*}$ is a partial inverse of $\alpha_{s_I}$, we conclude the following

\begin{lemma}\label{Lem:case s_I^*}
Let $n\in \N$, $\gamma\in X_A$. Then, $\alpha_{s_I^*}(W_n^{\gamma})\subseteq W_n^{\gamma}$ implies that $s_I$ is a cycle with no exits, and so the above inclusion is always an equality.
\end{lemma}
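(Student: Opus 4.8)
The plan is to exploit the fact, recorded in (\ref{LastNote}) and (\ref{GroupoidPicture}), that $\alpha_{s_I^*}$ is the set-theoretic inverse of $\alpha_{s_I}$, so that the action of $s_I^*$ amounts to stripping the prefix $s_I$ from an infinite path. As in the setting of (\ref{CondLocContr}) (with $s=s_I^*$, so that $s^*s=s_Is_I^*$), the hypothesis only makes sense when $W_n^{\gamma}\subseteq X_{s_Is_I^*}$; that is, every $\omega\in W_n^{\gamma}$ must begin with $s_I$. In particular $\gamma=s_I\gamma'$ and $|I|\le n$, so $\gamma\vert_n$ itself begins with $s_I$.

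First I would run the obvious recurrence. Pick any $\omega\in W_n^{\gamma}$ and write $\omega=s_I\omega'$, so that $\alpha_{s_I^*}(\omega)=\omega'$. The hypothesis $\alpha_{s_I^*}(\omega)\in W_n^{\gamma}$ forces $\omega'\vert_n=\gamma\vert_n$, and since $\gamma\vert_n$ begins with $s_I$ we get that $\omega'$ begins with $s_I$ too; hence $\omega$ begins with $s_Is_I$, which already forces $s_I$ to be a cycle (so that $s_Is_I$ is a legal path, i.e. $s(I)=t(I)$). Moreover $\omega'\in W_n^{\gamma}$, so the same argument applies to $\omega'$. Iterating, $\omega=s_Is_I\cdots$, and since $\omega\in W_n^{\gamma}$ was arbitrary we conclude that $W_n^{\gamma}=\{s_Is_I\cdots\}$ is a singleton.

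It then remains to read off the graph-theoretic content: a basic clopen cylinder $W_n^{\gamma}$ collapses to a single point exactly when no vertex traversed by the cycle $s_I$ admits an alternative edge, i.e. exactly when $s_I$ has no exit. This is the same computation that produces the strictness clause in Lemma \ref{Lem:case s_I}, now run in reverse; indeed, once one knows $s_I$ is a cycle with $\gamma\vert_n=s_Is_I\cdots s_I$, one may alternatively invoke Lemma \ref{Lem:case s_I} directly for $\alpha_{s_I}$ (which is now defined on $W_n^{\gamma}$ since $s(I)=t(I)$) and observe that $W_n^{\gamma}$ being a singleton rules out any exit. Either way, $s_I$ is a cycle with no exit. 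Finally, when $s_I$ has no exit, $X_{s_Is_I^*}=\{s_Is_I\cdots\}$ and $\alpha_{s_I^*}$ fixes this point, so the inclusion $\alpha_{s_I^*}(W_n^{\gamma})\subseteq W_n^{\gamma}$ is automatically an equality.

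The hard part is not any single estimate but the bookkeeping around the domain of $\alpha_{s_I^*}$: one must justify that every element of $W_n^{\gamma}$ genuinely starts with the full word $s_I$ (which needs $|I|\le n$ and $\gamma=s_I\gamma'$) before the recurrence can be launched, and one must pin down the equivalence \emph{cylinder is a singleton if and only if the cycle has no exit}, which is where the whole combinatorial content of the statement is concentrated and which mirrors, dually, the exit analysis already carried out in Lemma \ref{Lem:case s_I}.
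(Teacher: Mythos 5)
Your proof is correct and is essentially the argument the paper has in mind: the paper gives no separate proof of this lemma at all, deducing it in one line from Lemma \ref{Lem:case s_I} via the observation that $\alpha_{s_I^*}$ is the partial inverse of $\alpha_{s_I}$, and your recurrence (every element of the cylinder must begin with $s_I$, hence with $s_I^k$ for all $k$, so $W_n^{\gamma}=\{s_Is_Is_I\cdots\}$, which in turn rules out exits and forces equality) is exactly that deduction made explicit. One small inaccuracy worth fixing: the hypothesis does not force $\vert I\vert\le n$ --- if the vertices traversed by $\gamma$ between positions $n$ and $\vert I\vert$ each have a unique outgoing edge, one can perfectly well have $W_n^{\gamma}\subseteq X_{s_Is_I^*}$ with $\vert I\vert>n$. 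This is harmless, because the fact your recurrence actually needs, namely that every element of $W_n^{\gamma}$ (in particular every $\alpha_{s_I^*}(\omega)$ that lands back in it) begins with the full word $s_I$, follows directly from the domain requirement $W_n^{\gamma}\subseteq X_{s_Is_I^*}$, not from $\vert I\vert\le n$; alternatively, one checks that under that domain requirement $W_n^{\gamma}=W_{\vert I\vert}^{\gamma}$ whenever $\vert I\vert>n$, so one may enlarge $n$ and assume $\vert I\vert\le n$ without loss of generality.
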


\begin{proposition}\label{Prop: LocContr}
If every finite path in the graph $E_A$ can be enlarged to a cycle and $E_A$ satisfies Condition (L), then $\mathcal{G}_{\LAB}$ is locally contracting.
\end{proposition}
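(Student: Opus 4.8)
The plan is to verify the reformulated local-contractiveness criterion of (\ref{CondLocContr}): for arbitrary $n\in\N$ and $\gamma\in X_A$ I must exhibit $m>n$, $\delta\in X_A$ and $s\in\mathcal{S}^{A,B}$ meeting conditions (1)--(4) there. The guiding idea is to let $s$ be a cycle passing through the initial segment $\gamma\vert_n$ and to let $\delta$ be the infinite repetition of that cycle; the strictness demanded in (4) will come from Condition (L), while (1)--(3) will follow from Lemma \ref{Lem:case s_I}.

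First I would invoke the hypothesis that every finite path enlarges to a cycle, applied to the finite path $\gamma\vert_n$. This yields a multiindex $I$ with $s_I$ a cycle and $s_I=(\gamma\vert_n)\mu$ for some finite path $\mu$; in particular $s_I$ begins with $\gamma\vert_n$ and $s(I)=t(I)$. I then set $\delta:=s_Is_Is_I\cdots\in X_A$, pick $k'\geq 1$ with $m:=k'\vert I\vert>n$, and take $s:=s_I$, so that $\delta\vert_m=s_I^{\,k'}$. Since $s_I$ begins with $\gamma\vert_n$, so does $\delta$; thus $\delta\vert_n=\gamma\vert_n$ and $W_m^{\delta}\subseteq W_n^{\gamma}$, which is (1).

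For the rest I would apply Lemma \ref{Lem:case s_I} with $(n,\gamma)$ replaced by $(m,\delta)$, the point being that $\delta\vert_m=s_I^{\,k'}$ is a genuine $k'$-fold power of the cycle $s_I$. Every $\omega\in W_m^{\delta}$ then starts at the vertex $t(I)=s(I)$, hence lies in $X_{s_I^{*}s_I}=X_{q_{t(I)}}$, giving (2); and $s_I\cdot\omega$ again begins with $s_I^{\,k'}$, so $\alpha_{s_I}(W_m^{\delta})\subseteq W_m^{\delta}$, giving (3). Finally, Lemma \ref{Lem:case s_I} tells us this inclusion is strict precisely when the cycle $s_I$ has an exit, and strictness is exactly condition (4) (the existence of some $\widehat{\omega}\in W_m^{\delta}$ not of the form $s_I\cdot\omega$).

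The only delicate step, and the one I would single out as the main obstacle, is to be sure the manufactured cycle $s_I$ really has an exit: Condition (L) is stated for cycles, yet $s_I$ may revisit vertices, so it is a priori a closed walk rather than a simple cycle. The fix is the elementary observation that Condition (L) forces \emph{every} closed path to have an exit: were $s_I$ exit-free, each vertex it meets would emit a single edge, so the forced forward orbit of $s(I)$ would be a simple cycle with no exit, contradicting Condition (L). Hence $s_I$ has an exit, the inclusion $\alpha_{s_I}(W_m^{\delta})\subseteq W_m^{\delta}$ is proper, and (\ref{CondLocContr}) is satisfied, establishing local contractiveness.
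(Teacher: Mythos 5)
Your proof is correct and follows essentially the same route as the paper's: enlarge $\gamma\vert_n$ to a cycle using the hypothesis, take the periodic infinite path built from that cycle as the base point of the smaller clopen set, and deduce the proper inclusion $\alpha_{s_I}(W_m^{\delta})\subsetneq W_m^{\delta}$ from Lemma \ref{Lem:case s_I} together with Condition (L), concluding via (\ref{CondLocContr}). Your extra care in choosing $m$ to be a multiple of $\vert I\vert$ and in checking that Condition (L) (stated for cycles) still supplies an exit when the enlarged closed path revisits vertices are points the paper passes over silently, and they tighten rather than change the argument.
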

\begin{proof}
Let $n\in \N$ and $\gamma\in X_A$. By hypothesis there exists a finite path $\delta$ $E_A$ with $s(\delta)=r(\gamma\vert_n)$ and $r(\delta)=s(\gamma\vert_n)$. If the length of $\delta$ is $m$, we have $s=\omega=(\gamma\vert_n)\delta$ is a cycle and $W_{n+m}^{\omega}\subset W_n^{\gamma}$. Since $E_A$ satisfies Condition (L), $s$ has an exit, so that by Lemma \ref{Lem:case s_I} $\alpha_s(W_{n+m}^{\omega})\subsetneq W_{n+m}^{\omega}$. Thus, the results holds by (\ref{CondLocContr}).
\end{proof}

So, we can prove the main result of this section

\begin{theorem}\label{Thm: s.p.i.}
Consider the initial matrices $A,B$. If
\begin{enumerate}
\item The matrix $A$ is irreducible.
\item The graph $E_A$ satisfies Condition (L).
\item The matrix $B$ satisfies Condition (E).
\item For any fixed point  $\omega=s_{i_1,i_2,n_1} s_{i_2,i_3,n_2}\cdots s_{i_k,i_{k+1},n_k}\cdots$ and every $n\geq 1$ there exist $m\geq n$ and $j_{m+1}$ with:
\begin{enumerate}
\item $(i_m, j_{m+1})\in \OmA$ .
\item $K_{m+1}=K_m\cdot \displaystyle\frac{B_{i_m, j_{m+1}}}{A_{i_m, j_{m+1}}}\not\in \Z$.
\end{enumerate}
\end{enumerate}
then $\OAB$ is purely infinite simple.
\end{theorem}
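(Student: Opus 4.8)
The plan is to reduce the statement to showing that the reduced groupoid $C^*$-algebra $C_r^*(\mathcal{G}_{\LAB})$ is purely infinite simple, and then to transport the conclusion back to $\OAB$ through the isomorphisms $\OAB\cong C^*(\mathcal{G}_{\LAB})=C_r^*(\mathcal{G}_{\LAB})$ supplied by Corollary \ref{Cor:OABisoGroupoid} and Theorem \ref{Prop: amenable}. Simplicity is already at hand: hypotheses (1), (2) and (4) are verbatim conditions (1)(a)--(c) of Theorem \ref{Thm:Simple}, and since (3) asks exactly that $B$ satisfy Condition (E), that theorem applies to yield that $\OAB$, hence $C_r^*(\mathcal{G}_{\LAB})$, is simple. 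What remains is to promote simplicity to \emph{pure infiniteness}, and for this the natural route is to verify that $\mathcal{G}_{\LAB}$ is locally contracting and then invoke the dynamical criterion of Anantharaman--Delaroche \cite[Proposition 2.4]{A}.

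First I would assemble the two ingredients feeding local contractiveness. Essential principality of $\mathcal{G}_{\LAB}$ follows from Theorem \ref{Thm:EssentiallyPrincipal}, whose hypotheses (1)(a)--(c) are precisely our conditions (2), (3) and (4). For local contractiveness itself I would apply Proposition \ref{Prop: LocContr}, which requires Condition (L) --- our hypothesis (2) --- together with the combinatorial property that every finite path in $E_A$ can be enlarged to a cycle. This last property is not literally one of the four hypotheses, so the key observation to record is that it is forced by the irreducibility of $A$ (hypothesis (1)): given a finite path $\mu$ from $s(\mu)$ to $r(\mu)$, irreducibility furnishes a path in $E_A$ from $r(\mu)$ back to $s(\mu)$, and concatenation closes $\mu$ into a cycle. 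With both inputs in place, Proposition \ref{Prop: LocContr} gives that $\mathcal{G}_{\LAB}$ is locally contracting.

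Finally I would combine the pieces. Since $\mathcal{G}_{\LAB}$ is Hausdorff and \'etale with second countable, totally disconnected unit space (Corollary \ref{Corol:Etale}, Remark \ref{Rem:Cond(E)}), essentially principal, and locally contracting, \cite[Proposition 2.4]{A} guarantees that every nonzero hereditary $C^*$-subalgebra of $C_r^*(\mathcal{G}_{\LAB})$ contains an infinite projection; together with the simplicity established above (into which the minimality coming from hypothesis (1) via Corollary \ref{Corol:Act-Transitive} is already folded), and since the infinite projections themselves exclude the degenerate case $C_r^*(\mathcal{G}_{\LAB})\cong\C$, this is exactly the assertion that $C_r^*(\mathcal{G}_{\LAB})$ is purely infinite simple. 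Transporting through Theorem \ref{Prop: amenable} yields that $\OAB$ is purely infinite simple. I expect the main obstacle to be not a single hard estimate but the correct bookkeeping of hypotheses across the three auxiliary results (simplicity, essential principality, local contractiveness); in particular one must make sure the cycle-extension property driving Proposition \ref{Prop: LocContr} is genuinely \emph{derived} from irreducibility, and that the standing assumptions of the Anantharaman--Delaroche criterion (essential principality together with the \'etale, Hausdorff, second countable structure) are all verified before it is invoked.
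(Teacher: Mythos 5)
Your proposal is correct and follows essentially the same route as the paper: simplicity via Theorem \ref{Thm:Simple}, the observation that irreducibility of $A$ makes $E_A$ transitive so that every finite path closes into a cycle, local contractiveness via Proposition \ref{Prop: LocContr}, and then \cite[Proposition 2.4]{A} to produce infinite projections in hereditary subalgebras of $C_r^*(\mathcal{G}_{\LAB})$, transported back to $\OAB$ by Theorem \ref{Prop: amenable}. Your explicit verification of essential principality (via Theorem \ref{Thm:EssentiallyPrincipal}) before invoking the Anantharaman--Delaroche criterion is a point the paper leaves implicit inside Theorem \ref{Thm:Simple}, and is careful bookkeeping rather than a different argument.
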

\begin{proof}
By Theorem \ref{Thm:Simple} the algebra $\OAB$ is simple. Since the matrix $A$ is irreducible, the graph $E_A$ is transitive (i.e. there exists a path between any two vertices of $E_A$). Thus, $\mathcal{G}_{\LAB}$ is locally contracting by Proposition \ref{Prop: LocContr}. By \cite[Proposition 2.4]{A} every nonzero hereditary sub-$C^*$-algebra of $C_{r}^*(\mathcal{G}_{\LAB})$ contain an infinite projection, and then so does $\OAB$ by Theorem \ref{Prop: amenable}. Hence, $\OAB$ is purely infinite simple, as desired.
\end{proof}

Hence, under the hypotheses of either Theorem \ref{Thm:Simple}, Proposition \ref{Prop:Simple} or Corollary \ref{Corol:Simple}, we conclude that $\OAB$ is purely infinite simple. This includes the case considered by Katsura, when Condition (E) is satisfied. Also, since $A$ irreducible plus Condition (L) implies Condition (K), Theorem \ref{Thm: s.p.i.} becomes an extension of \cite[Theorem 16.2]{ExelLaca} to the case of $B$ being a nonzero matrix.\vspace{.2truecm}

Finally, we will show that, under Condition (E), it is possible to show a partial version of \cite[Proposition 4.5]{Kat2}.

\begin{lemma}\label{Lem:Prop4.5Kat2}
Let $G_0, G_1$ be finitely generated abelian groups. Then, there exist $N\in \N$, $A\in M_N(\Z^+), B\in M_N(\Z)$ such that:
\begin{enumerate}
\item The matrix $B$ satisfies Condition (E).
\item The matrix $A$ is irreducible.
\item $A_{i,i}\geq 2$ and $B_{i,i}=1$ for every $1\leq i\leq N$.
\item $G_0\cong \mbox{coker}(I-A)\oplus \mbox{ker}(I-B)$ and
$G_1\cong \mbox{coker}(I-B)\oplus \mbox{ker}(I-A)$.
\end{enumerate}
\end{lemma}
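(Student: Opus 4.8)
The plan is to realize the prescribed $K$-theory by an explicit pair of matrices, arranging from the start that $A$ is a \emph{full} positive matrix and $B$ a \emph{full} matrix, so that irreducibility of $A$, the diagonal requirements, and Condition (E) all become automatic rather than extra constraints. Writing the given groups in invariant-factor form as $G_0\cong \Z^{r}\oplus T_0$ and $G_1\cong \Z^{r}\oplus T_1$ with $T_0,T_1$ finite (the common value $r$ is forced here, since (\ref{KatsuraConditions}(3)) always gives $\mbox{rank}(K_0)=\mbox{rank}(K_1)$ for finite matrices), it suffices to produce $A\in M_N(\Zplus)$ and $B\in M_N(\Z)$ with
$$\mbox{coker}(I-A)\cong T_0,\quad \ker(I-A)=0,\quad \mbox{coker}(I-B)\cong \Z^{r}\oplus T_1,\quad \ker(I-B)\cong \Z^{r}.$$
Indeed, (\ref{KatsuraConditions}(3)) then gives $K_0(\OAB)\cong T_0\oplus \Z^{r}\cong G_0$ and $K_1(\OAB)\cong(\Z^{r}\oplus T_1)\oplus 0\cong G_1$. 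Since every entry of $A$ will be $\geq 1$ and every entry of $B$ nonzero, $\OmA$ is the set of all pairs, $A$ is irreducible, and Condition (E) holds because $B_{i,j}\neq 0$ precisely on $\OmA$.

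First I would build $A$. Writing $T_0\cong\bigoplus_{a=1}^{p}\Z/\alpha_a$, start from $\mathrm{diag}(\alpha_1,\dots,\alpha_p)$, pad it with extra diagonal entries equal to $1$ (contributing trivial summands $\Z/1=0$) until its size is the desired $N$, and then perform the elementary operations $R_i\to R_i+R_1$ for $i\geq 2$ followed by $C_j\to C_j+C_1$ for $j\geq 2$. These are unimodular, hence preserve both cokernel and determinant, and afterwards every entry of the resulting matrix $M$ is $\geq\alpha_1\geq 1$ while $M_{i,i}\geq 2$ for $i\geq 2$. Setting $A:=I+M$ gives a full matrix with $A_{i,i}\geq 2$, with $\mbox{coker}(I-A)=\mbox{coker}(-M)\cong T_0$, and with $\det(I-A)=\pm\prod_a\alpha_a\neq 0$, hence $\ker(I-A)=0$.

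Next I would build $B$. Let $\Sigma=\mathrm{diag}(1,\dots,1,\beta_1,\dots,\beta_q,0,\dots,0)$ be the Smith normal form having $q$ torsion factors realizing $T_1\cong\bigoplus_b\Z/\beta_b$ and exactly $r$ zero factors; any integer matrix with this Smith form has cokernel $\Z^{r}\oplus T_1$ and kernel $\Z^{r}$. The goal is to exhibit a matrix $C$ in the two-sided $\mathrm{GL}_N(\Z)$-orbit of $\Sigma$ (that is, $C=U\Sigma V$ with $U,V\in\mathrm{GL}_N(\Z)$) all of whose diagonal entries vanish and all of whose off-diagonal entries are nonzero. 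Then $B:=I-C$ satisfies $B_{i,i}=1$ and $B_{i,j}\neq 0$ for $i\neq j$, so $B$ is full and Condition (E) holds, while $\mbox{coker}(I-B)=\mbox{coker}(C)\cong\Z^{r}\oplus T_1$ and $\ker(I-B)=\ker(C)\cong\Z^{r}$ since the Smith form is unchanged.

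The hard part will be producing this $C$, and this is where I expect the argument to concentrate. I would first clear the diagonal by pairing the nonzero invariant factors and applying a row swap inside each block $\mathrm{diag}(d,e)\mapsto\left(\begin{smallmatrix}0&e\\ d&0\end{smallmatrix}\right)$, a unimodular move that leaves the Smith form intact; the diagonal zeros coming from the $0$-factors are already in place. This produces a matrix whose off-block entries still vanish, which I then fill in by adding integer multiples of one row or column to another, each addition being unimodular and taken small enough not to recreate a nonzero diagonal entry. Enlarging $N$ by inserting further trivial factors $\Z/1$ (which alter neither $T_1$, $r$, nor the kernel and cokernel) provides as much room as needed and, in particular, forces the rank $N-r$ to be at least $2$, disposing of the degenerate low-rank cases (such as $r$ close to $N$, or $T_1=0$). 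Checking that these fillings can always be completed — equivalently, that the orbit of $\Sigma$ meets the locus of matrices with zero diagonal and nowhere-zero off-diagonal — is the crux; once it is established, the four displayed identities hold and the proof closes via (\ref{KatsuraConditions}(3)).
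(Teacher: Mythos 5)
Your construction of $A$ is complete and correct: padding $\mathrm{diag}(\alpha_1,\dots,\alpha_p)$ with $1$'s, sweeping with the first row and column, and setting $A:=I+M$ does give a full positive matrix with $A_{i,i}\geq 2$, $\mbox{coker}(I-A)\cong T_0$ and $\ker(I-A)=0$; your reduction of condition (4) to the four displayed identities is fine, as is your observation that equal free ranks of $G_0$ and $G_1$ are forced when $N$ is finite. The genuine gap is exactly where you locate it, and it is not a loose end but the whole content of the lemma on the $B$ side: you never prove that the two-sided $\mathrm{GL}_N(\Z)$-orbit of $\Sigma$ meets the locus of matrices with zero diagonal and nowhere-vanishing off-diagonal entries. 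Worse, the filling procedure you sketch cannot work as described. If you add $c$ times row $k$ to row $i$, the diagonal entry $(i,i)$ changes by $c\,C_{k,i}$; so whenever $C_{k,i}\neq 0$ there is \emph{no} nonzero choice of $c$, however ``small'', that preserves the zero diagonal. The only additions compatible with your diagonal constraint are those using rows (or columns) with a zero in the pivotal position, and these are precisely the zeros you are trying to eliminate, so the process can stall; ``taken small enough'' is not a valid repair. Since you insist that $A$ be full, Condition (E) forces $B$ to be full, and this unresolved realization problem is then unavoidable in your scheme.

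The paper does not solve this problem; it removes it, and the contrast is instructive. Condition (E) only requires that $B$ and $A$ have the \emph{same} zero pattern, not that either be full. The paper takes matrices $A',B'$ realizing the $K$-theory (quoting \cite[Section 3]{Kat1} for conditions (2)--(4)), makes the blocks $A''$ and $C:=I'-B''$ full by elementary operations --- easy, because these blocks carry no diagonal constraint --- and then doubles the size:
$$A=\begin{pmatrix} 2I' & A'' \\ I' & 2I' \end{pmatrix},\qquad B=\begin{pmatrix} I' & C \\ I' & I' \end{pmatrix}.$$
Now $A_{i,i}=2$, $B_{i,i}=1$, the zero patterns match, $A$ is irreducible, and since
$$I-B=\begin{pmatrix} 0 & -C \\ -I' & 0 \end{pmatrix}$$
differs from $C\oplus I'$ by a unimodular block permutation, its kernel and cokernel are those of $C$ regardless of any diagonal constraint on $B$. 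In other words, the block structure decouples the pattern and diagonal requirements (imposed on the big matrices) from the $K$-theoretic data (carried by the unconstrained block $C$). Your own sweeping trick would salvage your more self-contained route if you drop fullness of $A$: starting from $\mathrm{diag}(1,\beta_1,\dots,\beta_q,1,\dots,1,0,\dots,0)$ --- note the leading $1$ --- the sweep $R_i\to R_i+R_1$, $C_j\to C_j+C_1$ produces a \emph{full} integer matrix with cokernel $\Z^{r}\oplus T_1$ and kernel $\Z^{r}$, because every entry becomes either $1$ or $d_i+1\geq 1$; take this as the block $C$, take $I'+M$ as the block $A''$, and the block doubling above finishes the proof without ever touching the zero-diagonal realization problem.
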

\begin{proof}
By using the results in \cite[Section 3]{Kat1}, we can guarantee that there exists $N'\in \N$, $A'\in M_{N'}(\Z^+), B'\in M_{N'}(\Z)$ satisfying (2-4). We can choose $A', B'$ such that the Smith Normal Form of both $I'-A'$ and $I'-B'$ are nonzero. Hence, by performing elementary row and column operations, we can replace these matrices by $A''\in M_{N'}(\Z^+), B''\in M_{N'}(\Z)$ such that $0\ne A''_{i,j}, I'-B''_{i,j}$ for every $1\leq i,j\leq N'$. Thus, $A'', C:=I'-B''$ satisfies (1) and (2), while:
\begin{enumerate}
\item $\mbox{coker}(I-A)\cong \mbox{coker}(I'-A'')$ and $\mbox{ker}(I-A)\cong \mbox{ker}(I'-A'')$
\item $\mbox{coker}(C)\cong \mbox{coker}(I'-B'')$ and $\mbox{ker}(C)\cong \mbox{ker}(I'-B'')$
\end{enumerate}

Now define $N=2N'$, and define matrices $A\in M_N(\Z^+), B\in M_N(\Z)$ as follows:
$$
A=\left(
\begin{array}{cc}
 2I' & A''  \\
 I' &   2I'
\end{array}
\right)\hspace{.5truecm}  \mbox{ and } \hspace{.5truecm}
B=\left(
\begin{array}{cc}
 I' & C  \\
 I' &   I'
\end{array}
\right).
$$
Since $A''$ is irreducible, the graph $E_{A''}$ is transitive. It is easy to see that then so is $E_A$, whence $A$ is irreducible. Thus, $A$ and $B$ satisfy (1-3). On one side
$$
I-A=\left(
\begin{array}{cc}
 I' & I'  \\
 0 &   I'
\end{array}\right)\cdot \left(\begin{array}{cc}
 A''-I' & 0  \\
 0 &   -I'
\end{array}\right) \cdot \left(\begin{array}{cc}
 0 & -I'  \\
 I' &   I'
\end{array}\right).$$
Hence, $\mbox{coker}(I-A)\cong \mbox{coker}(I'-A')$ and $\mbox{ker}(I-A)\cong \mbox{ker}(I'-A')$. On the other side
$$
I-B=\left(
\begin{array}{cc}
 C & 0  \\
 0 &   I'
\end{array}\right)\cdot \left(\begin{array}{cc}
 0 & -I'  \\
 -I' &   0
\end{array}\right).$$
Hence, $\mbox{coker}(I-B)\cong \mbox{coker}(I'-B')$ and $\mbox{ker}(I-B)\cong \mbox{ker}(I'-B')$.
Thus, (4) is fulfilled, so we are done.
\end{proof}

As a consequence of Theorem \ref{Thm: s.p.i.} and Lemma \ref{Lem:Prop4.5Kat2}, we conclude the following restricted version of \cite[Proposition 4.5]{Kat2}.

\begin{theorem}[{c.f. \cite[Proposition 4.5]{Kat2}}]\label{Thm:Prop4.5Kat2}
Let $G_0, G_1$ be finitely generated abelian groups. Then, there exist $N\in \N$, $A\in M_N(\Z^+), B\in M_N(\Z)$ satisfying Condition (E), such that:
\begin{enumerate}
\item $\OAB$ is unital Kirchberg algebra.
\item $K_i(\OAB)\cong G_i$ for $i=0,1$.
\end{enumerate}
\end{theorem}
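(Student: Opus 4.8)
The plan is to run the matrices supplied by Lemma \ref{Lem:Prop4.5Kat2} through the simplicity and pure-infiniteness results of the previous sections, and then read off the $K$-theory from Katsura's computation. First I would apply Lemma \ref{Lem:Prop4.5Kat2} to the given groups $G_0, G_1$, producing $N\in\N$ (finite), $A\in M_N(\Zplus)$ and $B\in M_N(\Z)$ for which $B$ satisfies Condition (E), $A$ is irreducible, $A_{i,i}\geq 2$ and $B_{i,i}=1$ for all $i$, and for which the kernel/cokernel groups of part (4) match $G_0$ and $G_1$. Finiteness of $N$ will be needed later for unitality, so I would record it now.

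The second step is to check that this pair $(A,B)$ verifies the hypotheses of Theorem \ref{Thm: s.p.i.}. Irreducibility of $A$ and Condition (E) for $B$ are handed to us directly by the lemma. Condition (L) for $E_A$ follows from $A_{i,i}\geq 2$: each vertex then has out-degree at least two, so no cycle can be without an exit. The remaining fixed-point hypothesis \ref{Thm: s.p.i.}(4) holds by Example \ref{Exam:MesQueKatsura}, since $|B_{i,i}|=1<2\leq A_{i,i}=|A_{i,i}|$ allows one to take the constant index sequence $j_{n+t}=i_n$ and force the product $\left(B_{i_n,i_n}/A_{i_n,i_n}\right)^r$ to tend to $0$; equivalently, $(A,B)$ satisfies Katsura's conditions \ref{KatsuraConditions}(2) together with Condition (E), which by the discussion following Theorem \ref{Thm: s.p.i.} already forces $\OAB$ to be purely infinite simple. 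Combining this with \ref{KatsuraConditions}(1), namely that $\OAB$ is separable, nuclear and in the UCT class, shows that $\OAB$ is a Kirchberg algebra. Unitality is then immediate from the finiteness of $N$: the finite sum $\sum_{i=1}^N q_i$ of mutually orthogonal projections is a two-sided identity, because the defining relations give $u_i=q_iu_i=u_iq_i$ and $s_{i,j,n}=q_is_{i,j,n}=s_{i,j,n}q_j$, so every generator is absorbed. This establishes (1).

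For (2) I would simply compare Katsura's $K$-theory formula \ref{KatsuraConditions}(3), namely $K_0(\OAB)\cong \mbox{coker}(I-A)\oplus\mbox{ker}(I-B)$ and $K_1(\OAB)\cong \mbox{coker}(I-B)\oplus\mbox{ker}(I-A)$, with the isomorphisms built into Lemma \ref{Lem:Prop4.5Kat2}(4), to conclude $K_i(\OAB)\cong G_i$ for $i=0,1$. The proof is essentially an assembly of earlier results, so I do not expect a genuine obstacle; the only point needing care is confirming that the qualitative properties the lemma places on $A$ and $B$ ($A_{i,i}\geq 2$, $B_{i,i}=1$, and Condition (E)) really do guarantee Condition (L) and the fixed-point condition \ref{Thm: s.p.i.}(4), which is exactly what the verification in the second step supplies.
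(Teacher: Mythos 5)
Your proposal is correct and follows exactly the paper's route: the paper proves this theorem precisely by feeding the matrices from Lemma \ref{Lem:Prop4.5Kat2} into Theorem \ref{Thm: s.p.i.} (pure infiniteness and simplicity), invoking \ref{KatsuraConditions}(1) and (3) for nuclearity, UCT and the $K$-groups, with unitality coming from finiteness of $N$. The details you supply --- Condition (L) from $A_{i,i}\geq 2$, and the fixed-point hypothesis via Example \ref{Exam:MesQueKatsura} since $|B_{i,i}|=1<A_{i,i}$ --- are exactly the verifications the paper leaves implicit.
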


Notice that Theorem \ref{Thm:Prop4.5Kat2} means that we can represent any unital Kirchberg algebra (up to isomorphism) as a Katsura algebra $\OAB$ such that the matrix $B$ satisfies Condition (E), and thus as the groupoid $C^*$-algebra of a minimal essentially principal locally contracting groupoid $\mathcal{G}_{\LAB}$.

\section*{Acknowledgments}

Parts of this work were done during visits of both authors to the Centre de Recerca Matem\`atica (UAB, Spain) and to the Mathematisches Forschungsinstitut Oberwolfach (Germany), and during a visit of the second author to the Departamento de Matem\'atica da Universidade Federal de Santa Catarina (Florian\'opolis, Brasil). The authors thank these host centers for their warm hospitality.

\end{document}